\documentclass[12pt,reqno]{amsart}

\usepackage{amscd,amsmath}
\usepackage{mathrsfs}
\usepackage{amsfonts}
\usepackage{amssymb}
\usepackage{enumerate}
\usepackage{setspace}
\usepackage{color}
\usepackage{nicefrac}
\usepackage[normalem]{ulem}

%\onehalfspacing

%\usepackage[margin=1.2in]{geometry}

\usepackage[colorlinks=true,
            linkcolor=red,
            urlcolor=blue,
            citecolor=gray]{hyperref}

\usepackage{amsthm}
\usepackage{amssymb,latexsym,graphics,enumerate}
\usepackage[mathscr]{eucal}
\usepackage{amsmath,amsfonts,amsthm,amssymb}
\usepackage[left=1in,right=1in,top=1in,bottom=1in]{geometry}
\usepackage{tikz} %\usepackage{pgfplots}
%\numberwithin{equation}{section}
%\pgfplotsset{compat=1.10}
%\usepgfplotslibrary{fillbetween}\usetikzlibrary{patterns}

\newcommand{\be}{\begin{equation}} %{\begin{eqnarray*}}
\newcommand{\bel}{\begin{eqnarray}}
\newcommand{\ee}{\end{equation}} %{\end{eqnarray*}}
\newcommand{\eel}{\end{eqnarray}}
\newcommand{\ba}{\begin{aligned}}
\newcommand{\ea}{\end{aligned}}

\newcommand{\bs}{\backslash}

\numberwithin{equation}{section}
\newtheorem{theorem}{Theorem}[section]
\newtheorem{lemma}[theorem]{Lemma}

\newtheorem{proposition}[theorem]{Proposition}

\newtheorem{remark}[theorem]{Remark}
\newtheorem{assumption}{Assumption}[section]

\renewcommand{\geq}{\geqslant}
\renewcommand{\ge}{\geqslant}
\renewcommand{\leq}{\leqslant}
\renewcommand{\le}{\leqslant}

\def\bs{{\mathbf s}}

\def\diam{D}

\numberwithin{equation}{section}

\newcommand{\rd}{\,\mathrm{d}}

\def\bu{{\bf u}}
\def\bx{{\bf x}}
\def\by{{\bf y}}
\def\bz{{\bf z}}

\def\bv{{\bf v}}

\def\bs{{\bf s}}

\def\Et{E}%{E_2}
\def\Ei{P}
\def\cEt{\delta \Et}%\cE_2}
\def\cEi{\delta \Ei}

\def\Ckitty{ C_{\infty}}
\def\Ckittyzero{ C_{s}}
\def\Cmeow{ C_{*}}
\def\Cmi{ C_{max}}
\def\CA{ C_A}
\def\Ccat{ C_{-}}
\def\Cplus{C_{+}}

\def\buin{{\mathbf u}_0} %initial u (vector)
\def\uin{u_0} %initial u(scalar)
\def\rin{\rho_0} %initial \rho
\def\Etin{{\Et_0}}
\def\Eiin{{\Ei_0}}
\def\cEtin{{\cEt_0}}
\def\cEiin{{\cEi_0}}
\def\FP{U} %\psi} %the potential
\def\diam{D} %diameter of supp(]rho)
\def\mixed{\lambda} %the hypocoercive factor

 % comments by Tadmor
 % comments by Ruiwen Shu

%%%%%%%%%
\begin{document}
%%%%%%%%%

\title[Flocking hydrodynamics with external potentials]
{Flocking hydrodynamics with external potentials}

\author{Ruiwen Shu}
\address{Department of Mathematics and Center for Scientific Computation and Mathematical Modeling (CSCAMM)\newline
University of Maryland, College Park MD 20742}
\email{rshu@cscamm.umd.edu}

\author{Eitan Tadmor}
\address{Department of Mathematics, Center for Scientific Computation and Mathematical Modeling (CSCAMM), and Institute for Physical Sciences \& Technology (IPST)\newline
University of Maryland, College Park MD 20742}
\email{tadmor@cscamm.umd.edu}

\date{\today}

\subjclass{92D25, 35Q35, 76N10}

%%%% Keyword entries to be placed here %%%%
\keywords{flocking, hydrodynamics, hypocoercivity, harmonic oscillator, regularity, critical thresholds.}

\thanks{\textbf{Acknowledgment.} Research was supported in part by NSF grants DMS16-13911, RNMS11-07444 (KI-Net) and ONR grant N00014-1812465.}
\date{\today}

\begin{abstract}
We study the large-time behavior of  hydrodynamic model  which describes the collective behavior of continuum of agents, driven by  pairwise alignment interactions  with additional external potential forcing. The external force tends to compete with alignment  which makes the large time behavior very different from the original Cucker-Smale (CS) alignment model, and far more interesting.  Here we focus on  uniformly convex potentials. In the particular case of \emph{quadratic} potentials, we are able to treat a large class of admissible interaction kernels, $\phi(r) \gtrsim (1+r^2)^{-\beta}$ with  `thin' tails $\beta \leq 1$ --- thinner than the usual `fat-tail' kernels encountered in CS flocking $\beta\leq\nicefrac{1}{2}$: we discover unconditional flocking with exponential  convergence of velocities \emph{and} positions  towards a  Dirac mass traveling as harmonic oscillator. For general convex potentials, we impose a  stability condition, requiring large enough   alignment kernel to avoid crowd  scattering.   We then prove, by hypocoercivity arguments, that  both the velocities \emph{and} positions of smooth solution must flock.   We also prove  the existence of global smooth solutions for one and two space dimensions, subject to critical thresholds in initial configuration space. It is interesting to observe that global smoothness can  be guaranteed for sub-critical initial data, independently of the apriori knowledge of large time flocking behavior.
\end{abstract}

\maketitle
\setcounter{tocdepth}{1}
\tableofcontents
%%%%%%%%%%

\section{Introduction}

We are concerned with the hydrodynamic alignment model with external potential forcing:
\begin{equation}\label{eq}
\left\{\begin{split}
& \partial_t \rho + \nabla_\bx \cdot(\rho \bu) = 0, \\
& \partial_t \bu + \bu\cdot\nabla_\bx \bu = \int \phi(|\bx-\by|)(\bu(\by,t)-\bu(\bx,t))\rho(\by,t)\rd{\by} - \nabla\FP(\bx).
\end{split}\right.
\end{equation}
Here $(\rho(\bx,t),\bu(\bx,t))$ are the local density and velocity field of a continuum of agents, depending on the spatial variables $\bx\in\Omega = \mathbb{R}^d \text{ or } \mathbb{T}^d$  and time $t\in\mathbb{R}_{\ge 0}$. 
The integral term on the right represents the alignment between agents, quantified in terms of the pairwise interaction kernel  $\phi=\phi(r)\ge 0$.
In many realistic scenarios,   agents driven by  alignment  are also subject to other forces  --- external forces from environment, pairwise attractive-repulsive forces, etc. Such forces may {\it compete} with alignment, which makes the large time behavior very different from the original potential-free model and far more interesting. One of the simplest type of external forces is {\it potential force}, given by the fixed external potential $\FP(\bx)$ on the right of \eqref{eq}. This is the main topic on the current work.

The system \eqref{eq} is a realization of the large-crowd dynamics
of the agent-based  system in which $N\gg1 $ agents identified with their position and velocity pair, $(\bx_i(t),\bv_i(t)) \in (\Omega \times \mathbb{R}^d)$, are driven by Cucker-Smale (CS) alignment \cite{CS2007a,CS2007b}, with additional external potential force
\begin{equation}\label{CS_par1}
\left\{\begin{split}
& \dot{\bx}_i = \bv_i \\
& \dot{\bv}_i = \frac{1}{N}\sum_{j\ne i} \phi(|\bx_i - \bx_j|)(\bv_j-\bv_i) - \nabla\FP(\bx_i)
\end{split}\right.\quad i=1,\dots,N.
\end{equation}
In the absence of any other forcing terms, both the agent-based system \eqref{CS_par1} and its large crowd description  \eqref{eq} have been studied intensively in the recent decade.
The most important feature of the potential-free CS model, \eqref{CS_par1} with $\FP\equiv 0$, is its {\it flocking} behavior: for a large class of interaction kernels satisfying the  `fat tail' condition, 
\begin{equation}\label{eq:fat}
 \int_0^\infty \phi(r)\rd{r}=\infty,
 \end{equation}
  \emph{global} alignment of velocities follows \cite{HT2008,HL2009}, 
$|\bv_i(t)-\bv_j(t)|\stackrel{t\rightarrow \infty}{\longrightarrow} 0$.
The presence of additional potential forcing in  the one-dimensional discrete system \eqref{CS_par1} was  recently studied in~\cite{HS2018}, where it is shown  that at least for some special choices of $\FP$,  {\it both position and velocity} align for large time, $|\bv_i(t)-\bv_j(t)|+ |\bx_i(t)-\bx_j(t)|\stackrel{t\rightarrow \infty}{\longrightarrow} 0$.\newline
The corresponding potential-free continuum system, \eqref{eq} with $\FP\equiv 0$, was studied in  \cite{HT2008, HL2009, CFTV2010, MT2014}: the large time behavior of its smooth solutions is captured by flocking, $|\bu(\bx,t)-\bu(\by,t)|\rho(\bx)\rho(\by)\stackrel{t\rightarrow \infty}{\longrightarrow} 0$, similar to the underlying discrete system. Moreover,
existence of one- and two-dimensional global smooth solutions was proved for a large class of initial configurations which satisfy certain critical threshold condition, \cite{TT2014,CCTT2016,ST2017a,ST2017b,HeT2017} and general multiD problems with nearly aligned initial data \cite{Sh2018, DMPW2018}.

\smallskip\noindent
 In this paper we study  the alignment dynamics  in the $d$-dimensional continuum system \eqref{eq}. 
We focus on the following two key aspects of \eqref{eq}.

$\bullet$ {\bf The flocking phenomena of global smooth solutions}, if they exist. Such results are well known in the absence of external potential --- smooth solutions subject to pure alignment must flock \cite{HT2008,TT2014,HeT2017}, but the presence of external potential has a confining effect which competes with alignment. Here we explore the flocking phenomena in the presence of
\emph{uniformly convex} potentials 
\be\label{eq:uconvex}
aI_{d\times d}\leq \nabla^2\FP(\bx) \leq AI_{d\times d}, \qquad 0<a<A.
\ee
The upper-bound on the right is \emph{necessary} for existence of  1D  global smooth solutions, consult theorems \ref{thm_1dsmooth}--\ref{thm_1dblowup} below; the uniform convexity on the left is necessary for the flocking behavior. We discover, in section \ref{sec:convex}, that  both the velocities \emph{and} positions of smooth solution must flock  at algebraic rate under a linear stability condition (\ref{Kcond1}),  
$m_0\phi(0) > \frac{A}{\sqrt{a}}$.
The necessity of a precise   stability condition, at least in  the general convex case, remains open. 
We can be much more precise in the special case of  \emph{quadratic potentials}, 
\be\label{eq:uquad}
\FP(\bx)=\frac{a}{2}|\bx|^2, \qquad a>0.
\ee
Here, in section \ref{sec:quadratic}, we discover unconditional flocking of velocities and positions with   \emph{exponential} convergence to a  Dirac mass traveling as a harmonic oscillator.
Moreover, the confining effect of the quadratic potential applies to  interaction kernels, $\phi(r) \gtrsim (1+r^2)^{-\beta}$ which allow for `thin' tails $\beta \leq 1$ --- thinner than the usual `fat-tail' kernels encountered in CS flocking \eqref{eq:fat}. 

\smallskip
$\bullet$ {\bf Existence of global smooth solutions}. In the absence of external force,  the existence of global smooth solutions of the  one- and respectively two-dimensional \eqref{eq} was proved in \cite{TT2014,CCTT2016}   and respectively \cite{HeT2017}, provided the initial data is `below'  certain critical threshold expressed in terms of the initial data $\nabla\buin$. We mention in passing that in case of singular kernel $\phi$, then smooth solutions exist independent of an initial threshold \cite{ST2017a}).  In the presence of additional convex  potential, \eqref{eq:uconvex}, we discover that the critical thresholds still exist, though they are tamed by the presence of $\FP$ (consult \cite{TW2008}).  In the particular case of quadratic potential \eqref{eq:uquad},  $\FP(\bx)$ does not  affect the dynamics of the spectral gap of $\nabla_S\bu$ which is a crucial step of the  regularity result in~\cite{HeT2017}, leading to existence of global smooth solutions. 
Existence with general  convex  potentials \eqref{eq:uconvex} requires different methodology than the quadratic case. These results are summarized in section \ref{sec:existence}.

%%%%%%%%%%%%%%%%%%%%%%%%%%%%%%
\section{Statement of main results --- flocking with quadratic potentials}\label{sec:quadratic}
%%%%%%%%%%%%%%%%%%%%%%%%%%%%%
We focus attention to 
\emph{quadratic potentials}, $\displaystyle \FP(\bx)=\frac{a}{2}|\bx|^2$,
where \eqref{eq} reads 
\begin{equation}\label{eq:quadratic}
\left\{\begin{split}
& \partial_t \rho + \nabla_\bx \cdot(\rho \bu) = 0, \\
& \partial_t \bu + \bu\cdot\nabla_\bx \bu = \int \phi(|\bx-\by|)(\bu(\by,t)-\bu(\bx,t))\rho(\by,t)\rd{\by} - a\bx.
\end{split}\right.
\end{equation}

\subsection{General considerations} We begin by recording general observations on system \eqref{eq} which is subject to  sufficiently smooth  data $(\rin,\buin)$, such that $\rin\geq0$ is compactly supported. Denote the total mass
\[
m_0 := \int \rin(\bx)\rd{\bx} >0.
\]

\medskip
\paragraph{{\bf $\bullet$ Interaction kernels}}
We assume that the system \eqref{eq} is driven by an  interaction kernel from a general class of \emph{admissible kernels}.
\begin{assumption}[{\bf Admissible kernels}]\label{phicond}
We consider \eqref{eq} with interaction kernel $\phi$ such that
\begin{subequations}\label{eqs:admiss}
\begin{align}
&\hspace*{-1.0cm}\mbox{(i)}   \quad \phi(r) \mbox{ is positive, decreasing and  bounded}: \ 
0< \phi(r) \le \phi(0):=\phi_+ < \infty; \label{phicond0}\\
&\hspace*{-1.0cm}\mbox{(ii)} \quad \phi(r) \mbox{ decays slow enough at infinity in the sense that } \int^\infty \!\!r\phi(r) \rd{r} =\infty. \label{phicond1}
\end{align}
\end{subequations}
\end{assumption}
\noindent
Note that (\ref{phicond1}) allows a larger admissible class of $\phi$'s with {\it thinner} tails than the usual `fat-tail' assumption \eqref{eq:fat} which characterizes unconditional  flocking of potential-free alignment, e.g., the original choice of Cucker-Smale, $\phi(r)=(1+r^2)^{-\beta},\, \beta \leq\nicefrac{1}{2}$ is now admissible for the improved range $\beta \leq 1$. 

\medskip
\paragraph{{\bf $\bullet$ Harmonic oscillators}} 
The distinctive feature of the alignment dynamics with quadratic potential \eqref{eq:quadratic}, is its Galilean invariance  w.r.t. the dynamics of harmonic oscillator associated with \eqref{eq:quadratic}. Thus,  let $(\bx_c,\bu_c)$ denote the mean position and the mean velocity 
\begin{subequations}\label{eqs:xcuc}
\begin{equation}\label{eq:xcuc}
\left\{\begin{split}
& \bx_c(t):=\frac{1}{m_0}\int \bx\rho(\bx,t)\rd{\bx} \\
& \bu_c(t):=\frac{1}{m_0}\int \bu(\bx,t)\rho(\bx,t)\rd{\bx}; \\
\end{split}\right.
\end{equation}
by \eqref{eq:quadratic}, these means are governed by the harmonic oscillator
\begin{equation}\label{xcuc}
\left\{\begin{split}
& \dot{\bx}_c = \bu_c \\
& \dot{\bu}_c = -a\bx_c. 
\end{split}\right.
\end{equation}
\end{subequations}
The translated quantities centered around  
the means, $\widehat{\rho}(\bx,t) = \rho(\bx_c(t)+\bx,t)$ and $\widehat{\bu}(\bx,t) = \bu(\bx_c(t)+\bx,t)-\bu_c(t)$, 
satisfy the same system \eqref{eq:quadratic} with vanishing mean location and mean velocity. We can therefore assume without loss of generality, after re-labeling $(\widehat{\rho},\widehat{\bu})\leadsto (\rho, \bu)$, that the solution of \eqref{eq:quadratic}  satisfies
\begin{equation}\label{eq:mean}
\int \bx{\rho}(\bx,t)\rd{\bx} \equiv 0, \quad \int{\bu}(\bx,t){\rho}(\bx,t)\rd{\bx}\equiv 0, \quad \mbox{for all} \ \   t\geq0.
\end{equation}

\medskip
\paragraph{{\bf $\bullet$ Energy decay}} We record below the basic energy bounds with general external potentials. Let  $\Et(t)$ denote the \emph{total energy} associated with \eqref{eq},
\begin{equation}\label{energy}
\Et(t) := \int \left(\frac{1}{2}|\bu(\bx,t)|^2+\FP(\bx)\right) \rho(\bx,t)\rd{\bx}
\end{equation}
The fundamental bookkeeping of \eqref{eq} is  the $L^2$-energy decay 
\be\label{eq:decay}
\frac{\rd}{\rd t} \Et(t) = -\frac{1}{2}\int\int \phi(|\bx-\by|)|\bu(\bx,t)-\bu(\by,t)|^2\rho(\bx,t)\rho(\by,t)\rd{\bx}\rd{\by}
\ee
This relates the  decay \emph{rate} of the energy  to the enstrophy, quantified in terms of \emph{energy fluctuations} on the right. We emphasize that the bound \eqref{eq:decay} applies to general  external potentials $\FP$.

\subsection{Bounded support}
A priori estimates for the growth rate of the support of $\rho$ is the key for proving flocking results for admissible kernels  $\phi$ with proper decay at infinity. For the case without external potential, it is straightforward to show that the velocity variation $\max_{t\ge 0,\,\bx,\by\in\textnormal{supp\,}\rho(\cdot,t)}|\bu(\bx,t)-\bu(\by,t)|$ is non-increasing, which implies the linear growth, $\textnormal{diam}(\textnormal{supp\,}\rho(\cdot,t)) = {\mathcal O}(t)$ which in turn yields the `fat-tail' condition \eqref{eq:fat}. Here we show that confining effect of the external potential enforces  the support of $\rho(\cdot,t)$ to remain \emph{uniformly bounded}. 

To this end, define the maximal particle energy
\begin{equation}
\Ei(t) := \max_{\bx\in\textnormal{supp\,}\rho(\cdot,t)} \Big(\frac{1}{2}|\bu(\bx,t)|^2 + \FP(\bx)\Big).
\end{equation}
The confinement effect of the external potential shows that this $L^\infty$-particle energy remains uniformly bounded in time.
We then `pair' the quadratic growth of $\FP(\bx)$ with the admissibility of thin-tails assumed in \eqref{phicond1}, to show that $\textnormal{supp\,}\rho(\cdot,t)$ remains uniformly bounded.

\begin{lemma}[{\bf Uniform bounds on particle energy}]\label{lem_grow}
Let $(\rho,\bu)$ be a smooth solution to \eqref{eq:quadratic} with an admissible interaction kernel \eqref{eqs:admiss}. Then  the particle energy and hence the support of $\rho(\cdot,t)$ remain uniformly bounded
\begin{equation}\label{grow}
\frac{a}{8}\diam^2(t) \leq \Ei(t) \le R_0, \qquad \diam(t):=\textnormal{diam}(\textnormal{supp\,}\rho(\cdot,t)).
\end{equation}
Here, the spatial scale $R_0=R_0(\phi_+,m_0,a,\Etin,\Eiin)$ is dictated by \eqref{eq:R0} below.
\end{lemma}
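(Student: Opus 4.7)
For the lower bound $\frac{a}{8}D^2(t)\le\Ei(t)$, I will use the normalization \eqref{eq:mean}, $\int\bx\rho(\bx,t)d\bx\equiv 0$, which forces the origin to lie in the convex hull of $\supp\rho(\cdot,t)$. Consequently, by the triangle inequality there exists $\bx\in\supp\rho(\cdot,t)$ with $|\bx|\ge D(t)/2$, at which $\Ei(t)\ge\FP(\bx)=\frac{a}{2}|\bx|^2\ge\frac{a}{8}D(t)^2$.

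For the upper bound $\Ei(t)\le R_0$, the plan is to differentiate $\frac{1}{2}|\bu|^2+\FP(\bx)$ along a Lagrangian characteristic $\dot\bx=\bu$. The chain-rule contribution $\nabla\FP\cdot\bu$ cancels the potential-force term in $\dot\bu$, leaving only the alignment:
\[
\frac{d}{dt}\Bigl(\frac{1}{2}|\bu|^2+\FP(\bx)\Bigr) = \bu(\bx,t)\cdot\int\phi(|\bx-\by|)\bigl(\bu(\by,t)-\bu(\bx,t)\bigr)\rho(\by,t)\,d\by.
\]
A standard upper-envelope argument applied at a characteristic $(\bx^*(t),\bu^*(t))$ realizing $\Ei(t)$ yields, for a.e.\ $t$,
\[
\frac{d\Ei}{dt}\le \bu^*\cdot\int\phi(|\bx^*-\by|)\bu(\by)\rho\,d\by \; - \; |\bu^*|^2\int\phi(|\bx^*-\by|)\rho\,d\by.
\]
Using the normalization $\int\bu\rho\,d\by\equiv 0$, I rewrite the first integral as $\int[\phi(|\bx^*-\by|)-\phi(D)]\bu(\by)\rho\,d\by$; monotonicity of $\phi$ gives $0\le\phi(|\bx^*-\by|)-\phi(D)\le\phi_+-\phi(D)$ for $\by\in\supp\rho$. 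Combined with Cauchy--Schwarz, the kinetic-energy bound $K(t):=\int\frac{1}{2}|\bu|^2\rho\le\Et(t)\le\Etin$ from \eqref{eq:decay}, and the trivial $\int\phi\rho\,d\by\ge\phi(D)m_0$, this produces
\[
\frac{d\Ei}{dt}\le -\phi(D)m_0|\bu^*|^2 + |\bu^*|(\phi_+-\phi(D))\sqrt{2m_0\Etin}.
\]
Together with the just-proved $D\le 2\sqrt{2\Ei/a}$, this is a closed scalar inequality for $\Ei$ alone.

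The main obstacle is converting this into a truly uniform-in-$t$ bound. A naive AM--GM would only give growth in $t$, because the damping $\phi(D)$ weakens as the support expands. The decisive ingredient is the damped-oscillator structure along the max characteristic, $\ddot\bx^*+m_\phi\dot\bx^*+a\bx^*=F$, with forcing $|F|\le(\phi_+-\phi(D))\sqrt{2m_0\Etin}$ and damping $m_\phi\ge\phi(D)m_0$ strictly positive as long as $D$ stays bounded. To extract the uniform bound I plan either (i)~a hypocoercivity-type Lyapunov such as $\frac{1}{2}|\bu^*|^2+\frac{a}{2}|\bx^*|^2+\mu\bx^*\cdot\bu^*$ for small $\mu>0$, whose quadratic part decays coercively thanks to the positive damping; or (ii)~a bootstrap/fixed-point argument, postulating $\Ei\le R_0$, deducing $\phi(D)\ge\phi(2\sqrt{2R_0/a})>0$, and showing that $R_0$ may be taken as the solution of an implicit relation that becomes \eqref{eq:R0}. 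The admissibility \eqref{phicond1}, $\int^\infty r\phi(r)\,dr=\infty$, is precisely what guarantees such a finite $R_0=R_0(\phi_+,m_0,a,\Etin,\Eiin)$ exists, since it ensures that $\phi$ does not decay too fast relative to the quadratic growth $\frac{a}{2}|\bx|^2$ of the potential.
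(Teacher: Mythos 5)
Your lower bound is correct and essentially identical to the paper's: with the centroid at the origin, $\max_{\supp\rho}|\bx|\ge \diam(t)/2$, whence $\Ei(t)\ge\frac{a}{2}\max|\bx|^2\ge\frac{a}{8}\diam^2(t)$.

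For the upper bound, however, you take a wrong turn right after the (correct) upper-envelope inequality, and the two strategies you then offer would not in fact close the argument for the full admissible class \eqref{phicond1}. After the envelope step you invoke $\int\bu\rho=0$, monotonicity of $\phi$, and Cauchy--Schwarz to reach $\Ei'\le -\phi(\diam)m_0|\bu^*|^2+|\bu^*|\,(\phi_+-\phi(\diam))\sqrt{2m_0\Etin}$. An AM--GM here produces $\Ei'\lesssim \phi_+^2\Etin/\phi(\diam)$, and the $1/\phi(\diam)$ factor is fatal: even with a bootstrap hypothesis $\Ei\le R_0$ fixing $\phi(\diam)\ge\phi(\sqrt{8R_0/a})$, the implicit relation $\phi_+^2\Etin/\phi(\sqrt{8R_0/a})^2\lesssim R_0$ becomes, in $r=\sqrt{8R_0/a}$, the condition $r\,\phi(r)\gtrsim\phi_+\sqrt{\Etin/a}$, i.e.\ $\limsup_{r\to\infty}r\phi(r)>0$. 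That is strictly stronger than the paper's admissibility $\int^\infty r\phi(r)\rd r=\infty$; it excludes $\phi(r)=(1+r^2)^{-1}$ (where $r\phi(r)\to0$), which the lemma is supposed to cover. Your option (i) (hypocoercivity $\frac12|\bu^*|^2+\frac{a}{2}|\bx^*|^2+\mu\bx^*\cdot\bu^*$) runs into the same obstruction: the constant $\mu$ must be taken $\lesssim m_0\phi(\diam)$, and the source $|f|^2$ is only controlled by $\Etin$, so the steady-state bound again scales like $1/\phi(\diam)^2$.

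Two ideas you are missing: first, do not pass through the mean-zero trick plus Cauchy--Schwarz -- instead complete the square directly as the paper does, $\bu^*\cdot\bu(\by)-|\bu^*|^2=-|\bu^*-\tfrac12\bu(\by)|^2+\tfrac14|\bu(\by)|^2\le\tfrac14|\bu(\by)|^2$, which gives the much cleaner bound $\Ei'(t)\le\frac{\phi_+}{2}E_k(t)$ with \emph{no} $\diam$-dependence and, crucially, $E_k(t)$ not replaced by $\Etin$. Second, pair this with the energy dissipation $\Et'\le-2m_0\phi(\diam)E_k$ through the paper's functional $Q(t)=\Et(t)+\frac{4m_0}{\phi_+}\int_{R_0}^{\Ei(t)}\phi(\sqrt{8r/a})\rd r$: since $\phi(\sqrt{8\Ei/a})\le\phi(\diam)$ by the lower bound and monotonicity, the two contributions to $Q'$ cancel term-by-term, giving $Q'\le 0$. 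That exact cancellation -- the factor $\phi(\diam)$ appearing on both sides -- is the mechanism that turns the \emph{integrated} decay of $E_k$ into a time-uniform bound on $\Ei$, and it is what extends the result to all $\phi$ with $\int^\infty r\phi(r)\rd r=\infty$. Neither your (i) nor (ii) recovers this.
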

For the proof, follow the  particle energy $F(\bx,t) := \frac{1}{2}|\bu(\bx,t)|^2+ \FP(\bx)$ along characteristics,
\[
\begin{split}
F' = & \partial_t F + \bu\cdot\nabla F \\
= & \bu\cdot\left(-\bu\cdot\nabla\bu + \int \phi(\bx-\by)(\bu(\by)-\bu(\bx))\rho(\by)\rd{\by} - \nabla \FP(\bx)\right)  + \bu\cdot(\bu\cdot\nabla\bu)  + \bu\cdot\nabla\FP(\bx) \\
= & \bu\cdot\left(  \int \phi(\bx-\by)(\bu(\by)-\bu(\bx))\rho(\by)\rd{\by}\right)  \\
= &  \int \phi(\bx-\by)(\bu(\bx)\cdot\bu(\by)-|\bu(\bx)|^2)\rho(\by)\rd{\by} \\
= &  \int \phi(\bx-\by)\Big(-\frac{1}{4}|\bu(\by)|^2 + \bu(\bx)\cdot\bu(\by)-|\bu(\bx)|^2\Big)\rho(\by)\rd{\by} + \int \phi(\bx-\by)\frac{1}{4}|\bu(\by)|^2 \rho(\by)\rd{\by} \\
= &  -\int \phi(\bx-\by)|\bu(\bx)-\frac{1}{2}\bu(\by)|^2\rho(\by)\rd{\by} + \frac{1}{4}\int \phi(\bx-\by)|\bu(\by)|^2 \rho(\by)\rd{\by} 
\le   \frac{\phi_+}{2}E_k(t),
\end{split}
\]
where $E_k(t)$ denotes the \emph{kinetic energy}
\be\label{eq:Ek}
\frac{\rd}{\rd t} \Ei(t) \leq \frac{\phi_+}{2}E_k(t), \qquad E_k(t):=\frac{1}{2}\int |\bu(\bx,t)|^2\rho(\bx,t)\rd{\bx}.
\ee
We emphasize that the bound \eqref{eq:Ek} applies to general symmetric kernels $\phi$ and is otherwise independent of the fine structure of the potential $\FP$.
Recalling  the diameter $\diam(t)=\textnormal{diam}(\textnormal{supp\,}\rho(\cdot,t))$, then 
 $L^2$-energy decay \eqref{eq:decay} yields 
\[
\frac{\rd}{\rd t}\Et(t) \leq -\frac{1}{2}\phi(\diam(t))\int\!\!\int |\bu(\bx,t)\!-\!\bu(\by,t)|^2\rho(\bx,t)\rho(\by,t)\rd{\bx}\rd{\by},
\]
and in view of \eqref{eq:mean}, this decay rate can be formulated in terms of the kinetic energy
\be\label{eq:Edecay}
\frac{\rd}{\rd t}\Et(t) \leq -2m_0\phi(\diam(t))E_k(t).
\ee
Further, the support of $\rho(\cdot,t)$ can be bounded in terms of the particle energy we have
\be\label{eq:PandD}
\Ei(t) \geq U(x) = \frac{a}{2}\max_{\textnormal{supp\,}\rho(\cdot,t)}|\bx|^2 \geq \frac{a}{8}\diam^2(t), \qquad \diam(t)=\textnormal{diam}(\textnormal{supp\,}\rho(\cdot,t)).
\ee
Finally, by the fat-tail assumption \eqref{phicond1}, 
$\displaystyle 
\int^\infty \phi(\sqrt{8r/a})\rd{r} = \frac{a}{4}\int^\infty r\phi(r)\rd{r}=\infty$, there exists a finite spatial scale $R_0>\Eiin$ such that 
\be\label{eq:R0}
\int^{R_0}_{\Eiin}\phi(\sqrt{8r/a})\rd{r} > \frac{\phi_+}{4m_0}\Etin.
\ee
We now consider the functional  $\displaystyle Q(t):=\Et(t)+ \frac{4m_0}{\phi_+}\int^{\Ei(t)}_{R_0}\!\!\!\!\!\!\phi(\sqrt{8r/a})\rd{r}$ which we claim is non-positive: indeed,  by \eqref{eq:R0}, $Q(0)\leq 0$ and in view of \eqref{eq:Ek}--\eqref{eq:PandD},  $Q(t)$  decreasing in time
\[
\frac{\rd}{\rd t}Q(t) \leq -2m_0\phi(\diam(t))E_k(t) + \frac{4m_0}{\phi_+}\frac{\phi_+}{2}E_k(t)\times \phi(\sqrt{8\Ei(t)/a}) \leq0.
\]
It follows that the particle energy remains uniformly bounded, 
\[
\frac{4m_0}{\phi_+}\int^{\Ei(t)}_{R_0}\!\!\!\phi(\sqrt{8r/a})\rd{r} \leq Q(t) \leq
0,
\]
hence $\Ei(t)$ remain bounded, $\Ei(t)\leq R_0$, and the uniform bound on $\diam(t)$ stated in  \eqref{grow} follows from \eqref{eq:PandD}. \qed\mbox{ }

\medskip\noindent
For the typical example of  $\phi(r)= c_0(1+r^2)^{-\beta}$ we find that \eqref{eq:R0} holds with
\[
R_0  \geq \frac{a}{8}\left[ \Big(\big(1+\frac{8}{a}P_0\big)^{1-\beta} + \frac{2(1-\beta)\phi_+}{a c_0 m_0}E_0 \Big)^{\frac{1}{1-\beta}} - 1\right].
\]

\medskip
\begin{remark}[{\bf On quadratic potential and pairwise interactions}]\label{rem:pair}
We emphasize that the proof of lemma \ref{lem_grow} relies on the special structure of the quadratic potential, namely, the Galilean invariance  with respect to harmonic oscillator \eqref{xcuc} which no longer holds for a general potentials. Specifically, observe that by the Galilean invariance, the energy decay rate  \eqref{eq:decay}  in terms of energy \emph{fluctuations} is converted into the $L^2$-energy decay \eqref{eq:Edecay}.\newline
We close this section by noting that the same Galilean invariance is intimately related to the fact that quadratic external forcing can be interpreted as \emph{pairwise interactions},
\begin{equation}\label{CS_pair}
\left\{\begin{split}
& \dot{\bx}_i = \bv_i \\
& \dot{\bv}_i = \frac{1}{N}\sum_{j\ne i} \phi(|\bx_i - \bx_j|)(\bv_j-\bv_i) - \frac{a}{N} \sum_{j\ne i} (\bx_i-\bx_j).
\end{split}\right.
\end{equation}
Indeed,  since the averages for the solution to \eqref{CS_par1} with $\FP=\frac{a}{2}|\bx|^2$--- the center of mass $\bx_c(t):=\nicefrac{1}{N}\sum_i \bx_i$ and mean velocity $\bu_c(t):=\nicefrac{1}{N}\sum_i \bv_i$ satisfy \eqref{xcuc}, we find that the translated quantities
$\bx_i \mapsto \bx_i-\bx_c(t),\,\bv_i \mapsto \bv_i-\bu_c(t)$ satisfy  \eqref{CS_pair}.  Similarly, the large crowd dynamics associated with \eqref{CS_pair} 
\begin{equation}\label{eq:pair}
\hspace*{-0.7cm}\left\{\begin{split}
& \hspace*{0.1cm} \partial_t \rho + \nabla_\bx \cdot(\rho \bu) = 0, \\
& \hspace*{0.1cm} \partial_t \bu + \bu\cdot\nabla_\bx \bu\!=\!\int \!\phi(|\bx-\by|)(\bu(\by,t)-\bu(\bx,t))\rho(\by,t)\rd{\by}\!-\!\frac{a}{m_0}\int \!(\bx-\by)\rho(\by,t)\rd{\by},
\end{split}\right.
\end{equation}
coincides with \eqref{eq:quadratic}  under suitable Galilean variable transformation.
\end{remark}

\ifx%%%%%%%%%%%%%%%%%%%%%%%%%%%%
%%%%%%%%%%%%%%%%%%%%%%%%%%%
\subsection{A priori estimate for the support of $\rho$}
%%%%%%%%%%%%%%%%%%%%%%%%%

A priori estimates for the growth rate of the support of $\rho$ is the key to prove flocking results with $\phi$ decaying at infinity. For the case without external potential, it is straightforward to show that the velocity variation $\max_{t\ge 0,\,\bx,\by\in\textnormal{supp\,}\rho(\cdot,t)}|\bu(\bx,t)-\bu(\by,t)|$ is non-increasing, which implies the linear growth of the support of $\rho$, i.e., $\textnormal{diam }(\textnormal{supp\,}\rho(\cdot,t)) = O(t)$. In the presence of confining external potential, we improve this estimate in three major stages. 

The first stage is a direct consequence of the particle energy bound \eqref{grow}, implying that $\textnormal{diam}(\textnormal{supp\,}\rho(\cdot,t)) \lesssim \sqrt{1+t}$. 
\begin{theorem}\label{thm_grow}
Let $(\rho,\bu)$ be a smooth solution of \eqref{eq} with potential  $\FP(\bx)\ge \frac{a}{2}|\bx|^2, \ a >0$. Then
\begin{equation}\label{grow2}
\textnormal{diam}(\textnormal{supp\,}\rho(\cdot,t)) \le C_a\sqrt{\Eiin + \frac{1}{2}\phi_+\Etin t}, \qquad C_a= \frac{2\sqrt{2}}{\sqrt{a}}.
\end{equation}
\end{theorem}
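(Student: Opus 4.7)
The plan is to assemble three pieces that are already in place before the statement: the monotone decay of the total energy \eqref{eq:decay}, the pointwise bookkeeping for the particle energy \eqref{eq:Ek}, and the lower bound $\Ei(t)\geq \frac{a}{8}\diam^2(t)$ from \eqref{eq:PandD}. The key observation is that although these inequalities were derived in the excerpt in the quadratic-potential setting, each of them carries over to the slightly more general hypothesis $\FP(\bx)\geq \frac{a}{2}|\bx|^2$: the derivation of \eqref{eq:Ek} along characteristics never used the specific form of $\FP$ (the terms $\pm\bu\cdot\nabla\FP$ cancelled), and \eqref{eq:PandD} uses only the pointwise quadratic lower bound on $\FP$.

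First, since $\FP\geq 0$, the kinetic energy is dominated by the total energy, $E_k(t)\leq \Et(t)$. The energy inequality \eqref{eq:decay} is a pure decay identity valid for general symmetric interaction kernels and any external potential, so $\Et(t)\leq \Etin$ for all $t\geq 0$. Combining, we obtain the uniform-in-time bound
\begin{equation*}
E_k(t) \;\leq\; \Etin, \qquad t\geq 0.
\end{equation*}

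Second, integrating \eqref{eq:Ek} over $[0,t]$ and inserting this bound on $E_k$ gives the linear-in-time estimate for the maximal particle energy,
\begin{equation*}
\Ei(t) \;\leq\; \Eiin + \frac{\phi_+}{2}\int_0^t E_k(s)\rd s \;\leq\; \Eiin + \frac{\phi_+}{2}\Etin\, t.
\end{equation*}

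Finally, applying the lower bound from \eqref{eq:PandD}, namely $\diam^2(t)\leq \frac{8}{a}\Ei(t)$, yields
\begin{equation*}
\diam^2(t) \;\leq\; \frac{8}{a}\Big(\Eiin + \tfrac{1}{2}\phi_+\Etin\, t\Big),
\end{equation*}
and taking square roots produces exactly \eqref{grow2} with $C_a=2\sqrt{2}/\sqrt{a}$. There is no real obstacle here — the statement is a direct corollary of the particle-energy lemma plus energy dissipation; the only mild care needed is to note that both the bookkeeping \eqref{eq:Ek} and the diameter–energy comparison \eqref{eq:PandD} remain valid under the weaker assumption $\FP(\bx)\geq \frac{a}{2}|\bx|^2$ rather than equality.
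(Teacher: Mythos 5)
Your proof is correct and is essentially the paper's own argument: integrate the pointwise particle-energy inequality \eqref{eq:Ek} together with the monotone energy decay \eqref{eq:decay} to get $\Ei(t)\le \Eiin+\tfrac12\phi_+\Etin t$, then pass from $\Ei(t)$ to the diameter via the quadratic lower bound $\FP(\bx)\ge\tfrac{a}{2}|\bx|^2$. Your explicit remark that \eqref{eq:Ek} and the diameter–energy comparison survive the relaxation from equality to the inequality $\FP\ge\tfrac a2|\bx|^2$ is the same observation the paper makes implicitly, so there is no substantive difference between the two proofs.
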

Indeed, for $\FP(\bx)\ge \frac{a}{2}|\bx|^2$, (\ref{grow}) implies
\[
\max_{\bx\in\textnormal{supp\,}\rho(\cdot,t)} |\bx| \le \frac{\sqrt{2}}{\sqrt{a}}\sqrt{\max_{\bx\in\textnormal{supp\,}\rho(\cdot,t)} \FP(\bx)} \le\frac{C_a}{2}\sqrt{\Eiin + \frac{1}{2}\phi_+\Etin t},
\]
and (\ref{grow2}) follows.
\begin{remark}
Theorem \ref{thm_grow} reflects the confining aspect of super-quadratic potentials. It applies to  general interaction kernels $\phi(\bx,\by)$, as long as $\phi$ is non-negative, bounded above, and symmetric: $\phi(\bx,\by) = \phi(\by,\bx)$. For example, one can take $\phi$ to be compactly supported, or the topological interaction kernel defined in \cite{ST2018}. Notice that although the proof of theorem \ref{thm_grow} does not use the symmetry of $\phi$ directly, it does use the energy decay estimate in the estimate of $F'$, which requires the symmetry of $\phi$.
\end{remark}

In the second stage we `pair' the root-growth of the $\textnormal{supp\,}\rho(\cdot,t)$ in \eqref{grow2} with the admissibility of sub-quadratic thin-tales assumed in \eqref{phicond1}, to show that $\textnormal{supp\,}\rho(\cdot,t)$ remains uniformly bounded.
\begin{proposition}[{\bf Flocking with sub-exponential rate}]\label{prop_flocking01}
Let $(\rho,\bu)$ be a global smooth solution of  \eqref{eq:quadratic}, subject to compactly supported $\rin$. Then the following  flocking estimates at sub-exponential rate hold.
\begin{align}
\cEt(t)& := \int\!\!\int (|\bu(\bx)-\bu(\by)|^2 + a|\bx-\by|^2)\rho(\bx)\rho(\by) \rd{\bx}\rd{\by} \le C\cdot\cEt(0)e^{-\lambda_0 (1+t)^{1-\beta}},\label{flocking0}\\
\cEi(t) & := \max_{\bx,\by\in\textnormal{supp\,}\rho(\cdot,t)} (|\bu(\bx,t)-\bu(\by,t)|+\sqrt{a}|\bx-\by|) 
 \le \Ckittyzero \cdot \cEi(0) e^{-\lambda_0 (1+t)^{1-\beta}}. \label{cEinfty0}
\end{align}
Here $\Ckittyzero$ and $\lambda_0$ are positive constants depending on the data $a ,\phi_+,m_0, c_0, \beta, \Etin, \Eiin$ and and $C>0$ is an absolute constant. In particular,  there holds the uniform-in-time bound
\begin{equation}\label{grow3}
\textnormal{diam}(\textnormal{supp\,}\rho(\cdot,t)) \le \frac{\Ckittyzero}{\sqrt{a}}\cEi(0).
\end{equation}
\end{proposition}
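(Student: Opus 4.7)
I would split the argument into a macroscopic $L^2$ bound for $\cEt$ via hypocoercivity, followed by a pairwise characteristic/envelope argument for $\cEi$; then \eqref{grow3} is immediate.

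By the Galilean invariance associated with \eqref{xcuc} I assume $\bx_c(t)\equiv 0,\ \bu_c(t)\equiv 0$, under which direct integration gives $\cEt(t)=4m_0(E_k(t)+P(t))=4m_0\Et(t)$, with $P(t):=\tfrac{a}{2}\int|\bx|^2\rho\,d\bx$, so \eqref{flocking0} reduces to sub-exponential decay of $\Et$. The identity \eqref{eq:Edecay} reads $\dot\Et\leq -2m_0\phi(\diam(t))E_k$, which is not coercive on its own because $E_k$ oscillates into $P$ under the harmonic dynamics. To break the degeneracy I introduce the mixed moment $M(t):=\int\bu\cdot\bx\,\rho\,d\bx$ and, using the momentum equation of \eqref{eq:quadratic} together with integration by parts and symmetrization $\bx\leftrightarrow\by$, derive
\[
\dot M(t)=2E_k(t)-2P(t)-\tfrac12\!\int\!\!\int\phi(|\bx-\by|)(\bu(\bx)-\bu(\by))\cdot(\bx-\by)\rho(\bx)\rho(\by)\,d\bx\,d\by.
\]
The Lyapunov functional $\Lambda(t):=\Et(t)+\epsilon M(t)$, for $\epsilon=\epsilon(a,m_0,\phi_+)$ small enough that $\Lambda\asymp\Et$, then satisfies $\dot\Lambda(t)\leq -c_1\phi(\diam(t))\Lambda(t)$: Cauchy--Schwarz and AM--GM absorb the cross-term of $\dot M$ into a fraction of $-\dot\Et$, while the $-2\epsilon P$ contribution from $\dot M$ supplies the missing potential-energy coercivity.

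The crude growth bound $\diam(t)\leq C\sqrt{1+t}$ follows from \eqref{grow} and $P\leq\Etin$, and combined with the admissible lower bound $\phi(r)\geq c_0(1+r^2)^{-\beta}$ gives $\phi(\diam(t))\geq c_2(1+t)^{-\beta}$. Gr\"onwall then produces
\[
\Lambda(t)\leq\Lambda(0)\exp\!\Bigl(-\tfrac{c_1 c_2}{1-\beta}\bigl((1+t)^{1-\beta}-1\bigr)\Bigr),
\]
which proves \eqref{flocking0}. For \eqref{cEinfty0}, I would track pairs of characteristics $\bx^1(t),\bx^2(t)$ with $\bv^i(t)=\bu(\bx^i,t)$ and consider the pairwise harmonic energy $H(\bx,\by,t):=|\bu(\bx)-\bu(\by)|^2+a|\bx-\by|^2$. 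Writing $\bell=\bx^1-\bx^2$, $\bs=\bv^1-\bv^2$, the key cancellation is that $-a\bell$ from the momentum equation cancels the cross term $a\bell\cdot\bs$ from $\partial_t(a|\bell|^2)$, so
\[
\tfrac{d}{dt}H(\bx^1(t),\bx^2(t),t)=2\bs\cdot\bigl(\mathrm{align}(\bx^1)-\mathrm{align}(\bx^2)\bigr).
\]
An envelope (Danskin) argument at the pair realizing $H^*(t):=\sup_{\bx,\by\in\textnormal{supp\,}\rho(\cdot,t)}H$, together with the splitting $\mathrm{align}(\bx^1)-\mathrm{align}(\bx^2)=-\int\phi(|\bx^1-\by|)\bs\,\rho(\by)\,d\by+(\textrm{error bounded by a multiple of }\cEt^{1/2})$, produces $\tfrac{d}{dt}H^*\leq -2m_0\phi(\diam(t))H^*+(\text{error})$; combining with the $\cEt$-decay already established, a further Gr\"onwall argument yields \eqref{cEinfty0}, and \eqref{grow3} follows from $\sqrt{a}\,\diam(t)\leq\cEi(t)\leq\Ckittyzero\cEi(0)$.

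The central difficulty is the hypocoercivity calibration: since $\phi(\diam(t))$ may degenerate in time, a constant $\epsilon$ only gives $\dot\Lambda\leq -c_1\inf_{s\leq t}\phi(\diam(s))\Lambda$, which under Lemma~\ref{lem_grow} is already enough to produce full exponential decay, whereas recovering the sharper time-varying factor $\phi(\diam(t))$---so that the proof does not presuppose the uniform support bound---requires either an adaptive $\epsilon=\epsilon(t)$ or equivalently running the argument in the rescaled time $\tau(t):=\int_0^t\phi(\diam(s))\,ds$, in which the alignment kernel is effectively constant. The envelope step for $\cEi$ is also subtle: the extremal pair drifts in time, the alignment difference is nonlocal and only approximately dissipative, and its error term must be controlled by the $\cEt$-decay already in hand.
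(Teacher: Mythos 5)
Your $L^\infty$ step has a genuine gap. After the cancellation $\tfrac{d}{dt}H=2\bs\cdot(\mathrm{align}(\bx^1)-\mathrm{align}(\bx^2))$ and the splitting $\mathrm{align}(\bx^1)-\mathrm{align}(\bx^2)=-(\phi*\rho)(\bx^1)\,\bs+\mathrm{error}$, one only gets $\tfrac{d}{dt}H\leq -2m_0\phi(\diam(t))|\bs|^2+\mathrm{error}$; the negative term controls the velocity component of $H$ but \emph{not} $a|\bell|^2$, so the claimed inequality $\tfrac{d}{dt}H^*\leq -2m_0\phi(\diam(t))H^*+\mathrm{error}$ is false as written, and the subsequent Gr\"onwall step does not produce decay of $\cEi$. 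The degeneracy is exactly the one the cross term was introduced to fix at the $L^2$ level, and the identical fix is needed here: replace $H$ by $H_\mu:=|\bs|^2+a|\bell|^2+2\mu\,\bs\cdot\bell$, so that the oscillator term $-a\bell$, paired with $\mu\bell$, generates $-2\mu a|\bell|^2$ (at the price of $+2\mu|\bs|^2$, absorbed for $\mu$ small). The paper's $L^\infty$ flocking proof (theorem~\ref{prop_infty1}) does precisely this in the Galilean-centered frame, tracking the single-particle quantity $F_1(\bx,t)=\tfrac12|\bu|^2+\tfrac{a}{2}|\bx|^2+2\lambda_1\,\bu\cdot\bx$ along one characteristic; your pairwise version is a legitimate alternative, but only once the cross term is in place.

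On the $L^2$ side, your constant-$\epsilon$ remark is also off: once $\phi(\diam(t))<\epsilon/m_0$, the $+2\epsilon E_k$ piece of $\epsilon\dot M$ overwhelms $-2m_0\phi(\diam(t))E_k$ and the Lyapunov inequality fails outright, so a fixed $\epsilon$ does not even give $\dot\Lambda\leq -c_1\inf_{s\leq t}\phi(\diam(s))\Lambda$. An adaptive $\epsilon(t)\simeq\epsilon_0\psi(t)$ with $\psi(t)\simeq(1+t)^{-\beta}$ is the right repair, but then $\dot\Lambda$ acquires the extra term $\dot\epsilon\,M$, whose control (using $|M|\lesssim\Et/\sqrt{a}$ and $|\dot\psi|/\psi\to 0$) should be made explicit. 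Also, $\diam(t)\lesssim\sqrt{1+t}$ does not follow from ``\eqref{grow} and $P\leq\Etin$''---\eqref{grow} already asserts the uniform bound, while $P\leq\Etin$ only controls a $\rho$-weighted second moment, not the support; the correct route is $\dot\Ei\leq\tfrac{\phi_+}{2}E_k\leq\tfrac{\phi_+}{2}\Etin$, hence $\Ei(t)\leq\Eiin+\tfrac{\phi_+}{2}\Etin\,t$, combined with $\tfrac{a}{8}\diam^2(t)\leq\Ei(t)$. For context: the paper does not in fact prove this proposition---it is a vestige of a superseded route. The paper proves the uniform-in-time support bound directly (Lemma~\ref{lem_grow}, via the comparison functional $Q(t)=\Et(t)+\tfrac{4m_0}{\phi_+}\int_{R_0}^{\Ei(t)}\phi(\sqrt{8r/a})\,\rd{r}$), after which $\phi(\diam(t))\geq\phi_->0$ and the flocking rate is upgraded to outright \emph{exponential} in theorems~\ref{thm_flocking1}--\ref{prop_infty1}.
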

The proof of the $L^2$-portion of this proposition is a standard application of hypo-coercivity, i.e., adding a small cross term to the energy estimate $\cEt$ to compensate the lack of dissipation in $\bx$. We then improve the $L^2$-flocking estimate which yields the  $L^\infty$-portion of the proposition. 

\medskip
\fi%%%%%%%%%%%%%%%%%%%%%%%%%%%%%

%%%%%%%%%%%%%%%%%%%%%%%%%%%
\subsection{Flocking of smooth solutions with exponential rate}
%%%%%%%%%%%%%%%%%%%%%%%%%
The {\it uniform-in-time} bound on the $\textnormal{supp\,}\rho(\cdot,t)$in \eqref{grow} shows that the values $\phi(r)$ with $r > \sqrt{8R_0/a}$ play no role in the solution of (\ref{eq:quadratic}). We can therefore assume without loss of generality that our admissible $\phi$'s are uniformly bounded from below, 
\begin{equation}\label{eq:phimin}
\phi(r) \ge \phi(\diam(t)) \geq \phi_- > 0, \qquad \phi_-:= \phi\Big(\frac{\sqrt{8R_0}}{\sqrt{a}}\Big).
\end{equation}
This enables us  prove   our  main statement of flocking with exponential decay. 
\begin{theorem}[{\bf Flocking with  $L^2$-exponential decay}]\label{thm_flocking1}
Let $(\rho,\bu)$ be a global smooth solution of  \eqref{eq:quadratic}, subject to compactly supported $\rin$. Then there holds the flocking estimate at exponential rate in both velocity and position:
\begin{equation}\label{flocking}
\cEt(t) :=\int\!\!\int (|\bu(\bx,t)-\bu(\by,t)|^2 + a|\bx-\by|^2)\rho(\bx,t)\rho(\by,t) \rd{\bx}\rd{\by} \le 2\cdot\cEtin \cdot e^{-\mixed t}.
\end{equation}
Here $\mixed=\mixed(a ,\phi_-,\phi_+,m_0)>0$.
\end{theorem}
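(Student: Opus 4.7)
The plan is a classical hypocoercivity argument. The $L^2$-energy identity \eqref{eq:decay} dissipates only the velocity-fluctuation piece $\cEt_v := \int\!\!\int|\bu(\bx)-\bu(\by)|^2\rho\rho$ of the combined quantity $\cEt = \cEt_v + \cEt_p$, with $\cEt_p := a\int\!\!\int|\bx-\by|^2\rho\rho$, so I would add a small hypocoercive cross term $W(t)$ that transfers some of this velocity dissipation to the position component, and work with a modified Lyapunov functional $\mathcal L(t):=\cEt(t) + \eta W(t)$ for a small parameter $\eta>0$ to be chosen at the end.

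The two computations at the heart of the proof would be the following. First, the zero-mean normalisation \eqref{eq:mean} gives $\cEt_v = 4m_0 E_k$ and $\cEt_p = 4m_0 E_p$, so that \eqref{eq:decay} combined with the uniform positive lower bound $\phi\ge\phi_-$ from \eqref{eq:phimin} yields $\frac{\rd}{\rd t}\cEt \le -2m_0\phi_-\cEt_v$. Second, I would take $W(t) := \int\!\!\int(\bx-\by)\cdot(\bu(\bx)-\bu(\by))\rho\rho$, which by the vanishing means equals $2m_0\int\bx\cdot\bu\rho\rd\bx$; differentiating along \eqref{eq:quadratic}, integrating by parts on the convection term (which contributes $+\int\rho|\bu|^2 = 2E_k$), and skew-symmetrising the alignment integral via $\phi(|\bx-\by|)=\phi(|\by-\bx|)$, I would arrive at
$$\frac{\rd}{\rd t}W = \cEt_v - \cEt_p - m_0\!\int\!\!\int\phi(|\bx-\by|)(\bx-\by)\cdot(\bu(\bx)-\bu(\by))\rho\rho.$$
The $-\cEt_p$ term is exactly the missing position dissipation; the residual alignment integral is then controlled by a weighted Young inequality by $C_1 \cEt_v + \tfrac12 \cEt_p$ for a constant $C_1=C_1(m_0,\phi_+,a)$, and weighted Cauchy--Schwarz likewise gives the equivalence $|W|\le \tfrac{1}{2\sqrt a}\cEt$.

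Combining the two estimates yields $\frac{\rd}{\rd t}\mathcal L \le -(2m_0\phi_- - \eta(1+C_1))\cEt_v - \tfrac{\eta}{2}\cEt_p$. I would then pick $\eta = \eta(a,\phi_-,\phi_+,m_0)>0$ small enough that both coefficients remain positive and simultaneously $\eta/(2\sqrt a)\le 1/3$, so that $\tfrac23\cEt \le \mathcal L \le \tfrac43\cEt$. This produces $\frac{\rd}{\rd t}\mathcal L \le -\lambda\mathcal L$ for some $\lambda=\lambda(a,\phi_-,\phi_+,m_0)>0$, and Gr\"onwall combined with the equivalence $\mathcal L\sim\cEt$ delivers the claimed bound $\cEt(t)\le 2\cEtin e^{-\lambda t}$. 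The one genuinely hard step has already been settled upstream by Lemma \ref{lem_grow}: without the a~priori support bound \eqref{grow} and the ensuing uniform kernel lower bound $\phi_-$, the velocity dissipation in Step~1 would degenerate with time and no exponential rate could be hoped for. Everything that remains is the standard hypocoercivity balancing of constants.
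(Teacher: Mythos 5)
Your proposal is correct and is essentially the paper's proof, re-expressed in fluctuation variables. The paper first reduces to zero-mean $(\bx_c,\bu_c)\equiv(0,0)$ so that $\cEt=4m_0\Et$, then runs the same hypocoercivity argument on the energy functional $V=\int(\tfrac12|\bu|^2+\tfrac{a}{2}|\bx|^2+2\lambda\,\bu\cdot\bx)\rho\,\rd\bx$; your $\mathcal L=\cEt+\eta W$ with $W=2m_0\int\bx\cdot\bu\,\rho\,\rd\bx$ is precisely $4m_0 V$ after identifying $\eta=4\lambda$, the Young-inequality bound on the residual alignment integral is identical, and both arguments ultimately invoke the uniform kernel lower bound $\phi\ge\phi_-$ from Lemma~\ref{lem_grow} to make the velocity dissipation non-degenerate.
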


\begin{remark}
In fact, one could develop a small-data result, where the exponential flocking asserted in  theorem \ref{thm_flocking1} is extended to  $\FP$'s close to  quadratic potential provided  under appropriate smallness condition on the initial data.
\end{remark}

From the proof of theorem \ref{thm_flocking1}, one can take the decay rate
\begin{equation}\label{lambda}
\mixed = \mixed(a):=\frac{1}{2}\min\left\{ \frac{m_0\phi_-}{ \nicefrac{\displaystyle m_0^2\phi_+^2}{\displaystyle a} + \nicefrac{\displaystyle 3}{\displaystyle 2}}, \frac{\sqrt{a}}{2} \right\}
\end{equation}
If one fixes $m_0$, $\phi_+$, $\phi_-$ and considers the asymptotic behavior for $a\rightarrow 0$, then the decay rate $\mixed = {\mathcal O}(a)$. For $a\rightarrow \infty$, the decay rate $\mixed={\mathcal O}(1)$. This shows that {\it the strength of external potential force} may have significant influence on the rate of flocking, and a weak potential tends to give a slower decay. One could interpret this as follows: to achieve an equilibrium, both velocity and position have to align; if the potential force is weak, then the alignment of position happens on a slower time scale, since the potential-free Cucker-Smale interaction does not provide position alignment.

\smallskip\noindent
Next, we turn to improve the $L^2$-flocking estimate in theorem \ref{thm_flocking1} into an $L^\infty$ estimate:
\begin{theorem}[{\bf Flocking with uniform exponential decay}] \label{prop_infty1}
Let $(\rho,\bu)$ be a global smooth solution of  \eqref{eq:quadratic}, subject to compactly supported $\rin$.  Then 
\begin{equation}\label{cEinfty}
\cEi(t):= \max_{\bx,\by\in\textnormal{supp\,}\rho(\cdot,t)} (|\bu(\bx,t)-\bu(\by,t)|^2+a|\bx-\by|)^2   \le \Ckitty \cdot \cEiin \cdot e^{-\lambda t/2},\quad \forall t\ge 0
\end{equation}
where the decay rate  $\mixed=\mixed(a)>0$  given by \eqref{lambda} and 
$\Ckitty$ is a positive constant given by  
\[
\Ckitty = 4\Big(1+\phi_+^2 m_0^2\Big(\frac{2}{m_0\phi_- \mixed(a)}+\frac{4}{a}\Big)\Big).
\]
\end{theorem}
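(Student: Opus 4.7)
The strategy is to track, along characteristics, the pairwise quantity $F(\bx,\by,t):=|\bu(\bx,t)-\bu(\by,t)|^2+a|\bx-\by|^2$ augmented by a hypocoercive cross-term, and to close a scalar Gronwall inequality whose forcing is absorbed by the $L^2$-decay already established in Theorem~\ref{thm_flocking1}.

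First, I fix two arbitrary characteristics $(\bx(t),\by(t))$ and consider
\[
F_\epsilon(t) := F(t) + 2\epsilon\,\w\cdot(\bx-\by),\qquad \w:=\bu(\bx,t)-\bu(\by,t),
\]
choosing $\epsilon$ small on the scale of $\sqrt{a}$ so that $\frac{1}{2}F\le F_\epsilon\le \frac{3}{2}F$. Differentiating along characteristics, the contribution of the external force $-a\bx$ cancels exactly with the derivative of $a|\bx-\by|^2$, which is the very same cancellation that powers the $L^2$ proof of Theorem~\ref{thm_flocking1}, isolating the alignment dynamics. After hypocoercive Young inequalities with weights tuned to $a$ and $m_0\phi_-$, this yields
\[
\dot F_\epsilon \le -\mixed F_\epsilon + C_1 |A|^2,\qquad A:=\int\bigl[\phi(|\bx-\bz|)-\phi(|\by-\bz|)\bigr]\bigl(\bu(\bz)-\bu(\by)\bigr)\rho(\bz)\rd\bz,
\]
with $\mixed$ the rate of Theorem~\ref{thm_flocking1} and $C_1\le \tfrac{2}{m_0\phi_-}+\tfrac{\epsilon}{a}$, the two summands encoding the two Young steps against the $|\w|^2$ and $a|\bx-\by|^2$ dissipations respectively.

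Next I bound the forcing $|A|^2$ by Cauchy-Schwarz using $|\phi(|\bx-\bz|)-\phi(|\by-\bz|)|\le 2\phi_+$, and exploit the Galilean normalization $\int\bu\rho\equiv 0$ to obtain the identity
\[
|A|^2 \le 4\phi_+^2 m_0\int |\bu(\bz)-\bu(\by)|^2\rho(\bz)\rd\bz = 4\phi_+^2 m_0\bigl[\,2E_k(t)+m_0|\bu(\by)|^2\,\bigr].
\]
Theorem~\ref{thm_flocking1} supplies $E_k(t)\le \tfrac{\cEtin}{2m_0}e^{-\mixed t}$, while the same mean-zero normalization combined with Jensen gives the pointwise bound $|\bu(\by)|^2 = |\bu(\by)-\bu_c|^2 \le \max_{\bz\in\supp\rho}|\bu(\by)-\bu(\bz)|^2 \le \cEi(t)$ for every $\by\in\supp\rho(\cdot,t)$. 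Substituting and taking the supremum over pairs, one arrives at the scalar inequality
\[
\frac{d}{dt}\cEi_\epsilon(t)\le -\tfrac{\mixed}{2}\cEi_\epsilon(t) + C_2\,\cEtin\,e^{-\mixed t},
\]
valid once $\epsilon$ is tuned so that the self-referential feedback generated by $|\bu(\by)|^2\le \cEi$ is absorbed into half of the hypocoercive rate $\mixed$. A Duhamel estimate, together with the elementary bound $\int_0^t e^{-\mixed(t-s)/2}e^{-\mixed s}\rd s\le \tfrac{2}{\mixed}e^{-\mixed t/2}$, then produces
$\cEi_\epsilon(t)\le \bigl(\cEiin+\tfrac{2C_2\cEtin}{\mixed}\bigr)e^{-\mixed t/2}$. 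The equivalence $F\sim F_\epsilon$ together with $\cEtin\le m_0^2\cEiin$ assemble into the constant $\Ckitty$ displayed in the theorem: the contribution $\tfrac{2}{m_0\phi_-\mixed}$ originates from the Duhamel integral against $E_k$, while the contribution $\tfrac{4}{a}$ originates from the hypocoercive cross-term closure against the quadratic potential.

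The main technical obstacle is precisely this Gronwall closure: the pointwise bound $|\bu(\by)|^2\le \cEi(t)$ introduces a feedback whose coefficient $\sim \phi_+^2 m_0^2 C_1$ must be balanced against the coercivity $\mixed$. Achieving this balance forces a delicate choice of $\epsilon$ and accounts for the halving of the decay rate from $\mixed$ in the $L^2$ Theorem~\ref{thm_flocking1} to $\mixed/2$ in the $L^\infty$ statement, exactly in parallel with the resonance factor $te^{-\mixed t}\le \tfrac{2}{e\mixed}e^{-\mixed t/2}$ that inevitably appears when a source decays at the same rate as the homogeneous solution.
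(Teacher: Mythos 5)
Your pairwise construction has a genuine gap at the Gronwall closure: the self-referential term $m_0|\bu(\by)|^2\le m_0\cEi(t)$ that appears in your bound for $|A|^2$ cannot be absorbed into the coercivity, for any choice of $\epsilon$. Concretely, the Young inequality $2\w\cdot A\le \delta|\w|^2+\tfrac{1}{\delta}|A|^2$ forces $\delta\le m_0\phi_-$ (otherwise the $|\w|^2$ dissipation is destroyed), so the coefficient multiplying $|A|^2$ satisfies $C_1\ge \tfrac{1}{m_0\phi_-}$ and the resulting feedback coefficient on $\cEi$ is at least of order $\tfrac{4\phi_+^2 m_0^2}{m_0\phi_-}=\tfrac{4m_0\phi_+^2}{\phi_-}$, \emph{independent of} $a$. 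Meanwhile $\lambda=\lambda(a)$ from \eqref{lambda} obeys $\lambda\le \tfrac{m_0\phi_-}{2(m_0^2\phi_+^2/a+3/2)}$, so
\[
\frac{\text{feedback coefficient}}{\lambda}\;\gtrsim\;\frac{m_0\phi_+^2/\phi_-}{m_0\phi_-/(m_0^2\phi_+^2/a+3/2)}\;\ge\;\frac{3\phi_+^2}{2\phi_-^2}\;>\;1
\]
for every $a>0$, and it blows up like $1/a$ as $a\to 0$. Thus the linear feedback dominates the coercivity unconditionally; the sentence ``once $\epsilon$ is tuned so that the self-referential feedback\ldots is absorbed into half of the hypocoercive rate $\lambda$'' is not achievable, and the ODE $\dot\cEi_\epsilon\le -\tfrac{\lambda}{2}\cEi_\epsilon+C_2\cEtin e^{-\lambda t}$ does not follow.

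The obstruction is intrinsic to the pairwise formulation: the alignment forcing along the pair $(\bx,\by)$ necessarily carries a term proportional to $[(\phi*\rho)(\bx)-(\phi*\rho)(\by)]\,\bu(\by)$, and $|\bu(\by)|$ has no smallness beyond $\sqrt{\cEi}$ itself. The paper avoids this entirely by exploiting the Galilean normalization \eqref{eq:mean} at the outset and tracking the \emph{single-point} perturbed energy
\[
F_1(\bx,t)=\tfrac{1}{2}|\bu(\bx,t)|^2+\tfrac{a}{2}|\bx|^2+2\lambda_1\,\bu(\bx,t)\cdot\bx,
\]
which, because $\bx_c=\bu_c\equiv 0$, is itself comparable to $\cEi$ (via $\tfrac{1}{8}\cEi\le\max_\bx F_1\le\tfrac12\cEi$). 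Along a single characteristic the alignment forcing splits as $-(\phi*\rho)|\bu|^2+\bu\cdot(\phi*(\rho\bu))$ plus the $\lambda_1$-cross-term analogues: the first piece is pure dissipation, and the only non-dissipative input is the \emph{momentum} $\phi*(\rho\bu)$, which is controlled in $L^2$ by the already-established exponential decay of Theorem~\ref{thm_flocking1} --- no pointwise $|\bu(\by)|$ ever enters. That is what makes the Gronwall closable and yields the stated rate $\lambda/2$ and the constant $\Ckitty$; your route cannot recover it without some additional mechanism (e.g.\ using the Lipschitz bound $|(\phi*\rho)(\bx)-(\phi*\rho)(\by)|\le m_0|\phi'|_\infty|\bx-\by|$, which still leaves a cubic or an $O(1)$-in-$a$ linear term and does not close).

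A minor additional inaccuracy: the halving of the rate from $\lambda$ to $\lambda/2$ does not come from any resonance $te^{-\lambda t}$; the source decays at rate $\lambda$ while the homogeneous rate is already $\lambda/2$ (because $\lambda_1\ge\lambda/2$), so the two rates are distinct and the Duhamel integral is non-resonant.
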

We conclude that the smooth solution of \eqref{eq:quadratic} converges exponentially to the harmonic oscillator \eqref{eqs:xcuc}
\begin{align}
\begin{split}
\rho(\bx,t)-m_0\delta(\bx-\bx_c(t)) &\stackrel{t \rightarrow \infty}{\longrightarrow} 0, \\
 \rho\bu(\bx,t)-m_0\bu_c(t)\delta(\bx-\bx_c(t)) & \stackrel{t \rightarrow \infty}{\longrightarrow} 0.
\end{split} 
\end{align}
Note that since 
$\displaystyle \cEt \le m_0^2\cdot \cEi$,
the $L^\infty$-version of flocking stated in  theorem \ref{prop_infty1} is an improvement of theorem \ref{thm_flocking1}: this improvement will be {\it crucial} in  studying the existence of global smooth solution for two-dimensional systems asserted in theorem \ref{thm_2dsmooth1} below. 

\ifx%%%%\[
\left[\begin{array}{c}\bx_c(t) \\ \frac{1}{\sqrt{a}}\bu_c(t)\end{array}\right]
=\left[\begin{array}{cc}\cos(\sqrt{a}t) & \sin(\sqrt{a}t)\\-\sin(\sqrt{a}t) & \cos(\sqrt{a}t)\end{array}\right] \left[\begin{array}{c}\bx_c(0) \\ \bu_c(0)\end{array}\right]
\fi%%%%%%\]
 
\begin{remark}[{\bf blow-up as $a \ll 1$}]
We note in passing  that \eqref{cEinfty} does not recover the velocity alignment in the potential-free case due to the blow-up of  $\displaystyle \Ckitty 
={\mathcal O}(1/{a})$
as $a\rightarrow 0$. The growing bound  is due to  the proof in which we estimate the momentum $\phi*(\rho \bu)$ as a source term by using $L^2$ exponential decay in theorem \ref{thm_flocking1}: yet, the $L^2$-decay rate $\lambda(a)$  deteriorates as $a \rightarrow 0$, and  the effect of an increasing  source term leads to the blow-up of $\Ckitty$. Indeed, it is known that the unconditional velocity alignment in the potential-free case is restricted to the `fat-tails' \eqref{eq:fat}, hence                                                                                                                                                                                                               our approach for the thinner tails \eqref{eqs:admiss}
cannot apply uniformly in $1/a$.
\end{remark}

%%%%%%%%%%%%%%%%%%%%%%%%%%%%%%
\section{Statement of main results --- flocking with general convex potentials}\label{sec:convex}
%%%%%%%%%%%%%%%%%%%%%%%%%%%%%
\subsection{General considerations} We now turn our attention to alignment dynamics  \eqref{eq} with more general strictly convex  potentials, \eqref{eq:uconvex}. The flocking results are more restricted. We begin with specifying the smaller class of admissible interaction kernels.

\begin{assumption}[{\bf Admissible kernels}]\label{uphicond}
We consider \eqref{eq} with interaction kernel $\phi$ such that
\begin{subequations}\label{eqs:uadmiss}
\begin{align}
&\mbox{(i)}   \quad \phi(r) \mbox{ is positive, decreasing and  bounded}: \ 
0< \phi(r) \le \phi(0):=\phi_+ < \infty; \label{uphicond0}\\
&\mbox{(ii)} \quad \phi(r) \mbox{ decays slow enough at infinity in the sense that} \
\limsup_{r\rightarrow\infty} r\phi(r) = \infty. \label{uphicond1}
\end{align}
\end{subequations}
\end{assumption}
\noindent
Notice that (\ref{uphicond1}) is only slightly more restrictive than the usual `fat-tail' assumption $\displaystyle \int_0^\infty \phi(r)\rd{r} = \infty$, which characterize unconditional  flocking in the case of potential-free alignment \cite{HT2008,HL2009}.

\medskip
We begin noting that the basic bookkeeping of  energy decay  \eqref{energy} still holds, 
\[
\frac{\rd}{\rd t} \Et(t) = -\frac{1}{2}\int\int \phi(|\bx-\by|)|\bu(\bx,t)-\bu(\by,t)|^2\rho(\bx,t)\rho(\by,t)\rd{\bx}\rd{\by}.
\]

\medskip
\paragraph{{\bf $\bullet$ Uniform bounds}} A  necessary main ingredient in the analysis of \eqref{eq} is the uniform bound of $\text{diam}(\textnormal{supp }\rho(\cdot,t))$, and the amplitude of velocity $\displaystyle \mathop{\max}_{\bx\in {\textnormal{supp } \rho}}|\bu(\bx,t)|$. Our next lemma shows that whenever one has a uniform bound of $|\bu(\bx,t)|+|\bx|$ for the \emph{restricted} class of lower-bounded $\phi$'s which scales like ${\mathcal O}(1/\min \phi)$, then it implies a uniform bound of 
$|\bu(\bx,t)|+|\bx|$ for the general class of admissible $\phi$'s \eqref{eqs:admiss}.

\begin{lemma}[{\bf The reduction to lower-bounded $\phi$'s}]\label{cor_reduce}
Consider \eqref{eq} with a with the \underline{restricted} class of lower-bounded $\phi$'s:
\begin{equation}\label{phiminus}
0< \phi_- \le \phi(r) \le \phi_+ < \infty.
\end{equation}
Assume that the solutions  $(\widetilde{\rho},\widetilde{\bu})$ associated with the restricted \eqref{eq},\eqref{phiminus},  satisfy the uniform bound (with constants   $C_\pm$ depending on $\FP, \phi_+,m_0$ and $\Etin$)
\begin{equation}\label{eq:ubound}
\max_{t\ge 0,\, \bx\in\textnormal{supp }\tilde{\rho}(\cdot,t)} (|\widetilde{\bu}(\bx,t)|+|\bx|) \le \max\left\{\Cplus\cdot\hspace*{-0.4cm}\max_{\quad \bx\in\textnormal{supp }\widetilde{\rho}_0} (|\widetilde{\bu}_0(\bx)| + |\bx|)\,,\frac{\Ccat}{\phi_-}\right\}.
\end{equation}
Then the following holds for solutions associated with a \underline{general}  admissible kernel $\phi$ \eqref{eqs:uadmiss}: if $(\rho,\bu)$ is a smooth solution of \eqref{eq}, then there exists $\alpha>0$ (depending on the initial data  $(\rho_0,\bu_0)$),  such that $(\rho,\bu)$ coincides with  the solution, $(\widetilde{\rho}_\alpha,\widetilde{\bu}_\alpha)$, associated with the lower-bounded  $\phi_\alpha(r) := \max\{\phi(r),\alpha\}$.
\end{lemma}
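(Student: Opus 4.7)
The strategy is to choose a truncation level $\alpha>0$, depending only on the initial data, such that the auxiliary system driven by the truncated kernel $\phi_\alpha=\max\{\phi,\alpha\}$ has such a tight \emph{a priori} support bound that $\phi$ and $\phi_\alpha$ agree on every difference $|\bx-\by|$ appearing in the alignment integral. Uniqueness of smooth solutions will then identify the two evolutions.

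Set $Q_0 := \max_{\bx\in\textnormal{supp}\,\rin}(|\buin(\bx)|+|\bx|)$, which is finite by hypothesis. The slightly-stronger-than-fat-tail condition \eqref{uphicond1} guarantees that $r\phi(r)$ assumes arbitrarily large values on arbitrarily large arguments, so I can select $r_0$ satisfying simultaneously
\[
r_0 \ge 2\Cplus Q_0,\qquad r_0\,\phi(r_0) \ge 2\Ccat.
\]
Now define $\alpha := 2\Ccat/r_0$, so that $\phi(r_0)\ge\alpha$, and by the monotonicity of $\phi$, $\phi(r)\ge\alpha$ for every $r\le r_0$.

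Consider the truncated kernel $\phi_\alpha$. It satisfies \eqref{phiminus} with $\phi_-=\alpha$, so the hypothesis \eqref{eq:ubound} applies to the corresponding smooth solution $(\widetilde{\rho}_\alpha,\widetilde{\bu}_\alpha)$ with initial data $(\rin,\buin)$:
\[
\max_{t\ge 0,\,\bx\in\textnormal{supp}\,\widetilde{\rho}_\alpha(\cdot,t)}\bigl(|\widetilde{\bu}_\alpha(\bx,t)|+|\bx|\bigr)
\;\le\;\max\Bigl\{\Cplus Q_0,\ \tfrac{\Ccat}{\alpha}\Bigr\}\;=\;\tfrac{r_0}{2},
\]
by the very choice of $r_0$. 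Consequently $\textnormal{supp}\,\widetilde{\rho}_\alpha(\cdot,t)\subset B(0,r_0/2)$ for all $t\ge 0$, which yields $|\bx-\by|\le r_0$ for any pair in that support, and therefore $\phi_\alpha(|\bx-\by|)=\phi(|\bx-\by|)$ on the full range of interactions relevant to \eqref{eq}.

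It follows that $(\widetilde{\rho}_\alpha,\widetilde{\bu}_\alpha)$ is also a smooth solution of the original system \eqref{eq} driven by the untruncated $\phi$, sharing the initial data with $(\rho,\bu)$. Standard uniqueness of smooth solutions (propagation of regularity plus a Gronwall argument for the difference of two solutions with bounded support and bounded velocity) then forces $(\rho,\bu)\equiv(\widetilde{\rho}_\alpha,\widetilde{\bu}_\alpha)$. The only genuinely delicate step is the simultaneous realization of the two inequalities on $r_0$, and this is exactly the content of \eqref{uphicond1}; the rest is a direct application of the hypothesis together with uniqueness.
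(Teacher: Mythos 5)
Your proof is correct and follows essentially the same bootstrap argument as the paper: choose $r_0$ large enough (using \eqref{uphicond1}) so that both $r_0\ge 2\Cplus Q_0$ and $r_0\phi(r_0)\ge 2\Ccat$, then use \eqref{eq:ubound} on the truncated system to confine the support inside a ball of radius $r_0/2$, on which $\phi_\alpha=\phi$, and conclude by uniqueness. The only cosmetic difference is your choice $\alpha=2\Ccat/r_0$ versus the paper's $\alpha=\phi(r_0)$; since $\phi(r_0)\ge 2\Ccat/r_0$ both choices give the same support bound and the same conclusion.
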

This means that if $\phi$ belongs to the general class of admissible kernels \eqref{eqs:uadmiss}, then we can  assume, without loss of generality, that $\phi$ coincides with the lower bound $\phi_\alpha$ and hence the uniform bound \eqref{eq:ubound} holds with $\phi_-=\alpha$. The justification of this reduction step is outlined below.

\begin{proof}[Proof of Lemma \ref{cor_reduce}]
By the condition (\ref{phicond1}), there exists $r_0$ such that
$r_0 \phi(r_0) \ge 2\Ccat$, 
and one could take large enough $r_0$ such that
\begin{equation}\label{r0large}
r_0\ge 2\Cplus\cdot \hspace*{-0.4cm}\max_{\quad \bx\in\textnormal{supp }\rin}\hspace*{-0.4cm}(|\buin(\bx)|+|\bx|).
\end{equation}
Let $\alpha=\phi(r_0)$. By assumption, (\ref{eq:ubound}) holds for the lower-bounded $\phi_\alpha$, so that
\begin{equation}
\max_{t\ge 0,\, \bx\in\textnormal{supp }\rho_\alpha(\cdot,t)} (|\bu_\alpha(\bx,t)|+|\bx|) \le \max\Big\{\Cplus\cdot\hspace*{-0.4cm}\max_{\quad \bx\in\textnormal{supp }\rin}\hspace*{-0.4cm} (|\buin(\bx)|+|\bx|), \frac{\Ccat}{\alpha}\Big\}
\end{equation}
where $(\rho_\alpha,\bu_\alpha)$ is the smooth solution of (\ref{eq}) with interaction kernel  $\phi_\alpha$, which we assume to exist. Therefore, for any $t\ge 0$ and any $\bx,\by \in \textnormal{supp }\rho_\alpha(\cdot,t)$, we have
\begin{equation}
|\bx-\by| \le |\bx|+|\by| \le 2\max\Big\{\Cplus\cdot \hspace*{-0.4cm}\max_{\quad\bx\in\textnormal{supp }\rin}\hspace*{-0.4cm} (|\buin(\bx)|+|\bx|), \frac{\Ccat}{\alpha}\Big\}
\end{equation}
By definition,
\begin{equation}
\frac{\Ccat}{\alpha} = \frac{\Ccat}{\phi(r_0)} \le \frac{r_0}{2}
\end{equation}
Together with (\ref{r0large}), we obtain $|\bx-\by| \le r_0$
for which,  by the monotonicity of $\phi$,
$\phi(|\bx-\by|) \ge \phi(r_0) = \alpha$. But for this  $\bx,\by$ which persist with  a ball of diameter $r_0$ we have $\phi(|\bx-\by|) = \phi_\alpha(|\bx-\by|)$
so the dynamics of $(\rho_\alpha,\bu_\alpha)$ coincides with $(\rho,\bu)$.
\end{proof}

\begin{remark}
For the special case
$\displaystyle 
\phi(r) = \frac{\phi_+}{(1+r^2)^{\beta/2}}$
with $\beta<1$, the proof of Corollary \ref{cor_reduce} shows that one could take
\begin{equation}
\alpha=\phi(r_0),\quad r_0 = \max\left\{4\left(\frac{\Ccat}{\phi_+}\right)^{\frac{1}{1-\beta}}, \, 2\Cplus\cdot \hspace*{-0.4cm}\max_{\quad \bx\in\textnormal{supp }\rin} \hspace*{-0.4cm} (|\buin(\bx)|+|\bx|)\right\}
\end{equation}
Therefore, the lower cut-off at $\alpha$, which depends on $\beta,m_0,\phi_+$ and the initial data, gets smaller when $\beta$ approaches 1.
\end{remark}

The following proposition asserts the uniform bounds (\ref{eq:ubound}) exist for the \emph{restrictive} class of kernels bounded from below, under very mild conditions on $\FP$.

\begin{proposition}\label{prop_infty2}
Assume the potential $\FP$ satisfies 
\begin{equation}\label{assu_Aa}
\frac{a}{2}|\bx|^2 \le \FP(\bx) \le \frac{A}{2}|\bx|^2,\quad a|\bx|\le |\nabla \FP(\bx)| \le A|\bx|,\quad \forall \bx\in\Omega, \quad 0<a\le A.
\end{equation}
 Consider the alignment system  \eqref{eq},\eqref{assu_Aa} with an interaction kernel which is assumed to be  bounded from below, \eqref{phiminus}.
Then there exist constants $C_\pm$, depending on  $\FP, \phi_+,m_0$ and  $\Etin$, such that \eqref{eq:ubound} holds.
\end{proposition}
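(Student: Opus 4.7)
\textbf{Proof strategy for Proposition \ref{prop_infty2}.} The plan is to construct a hypocoercive Lyapunov functional along characteristics, exploiting the lower bound $\phi\ge\phi_-$ to obtain pointwise damping in velocity, and the convexity lower bound $\bx\cdot\nabla U\ge U(\bx)\ge \frac{a}{2}|\bx|^2$ to obtain damping in position. Concretely, define the modified particle energy
\[
\tilde F(\bx,t) := \tfrac12 |\bu(\bx,t)|^2 + U(\bx) + \lambda\, \bx\cdot\bu(\bx,t),
\]
for a small parameter $\lambda>0$ to be chosen. Using Young's inequality and the two-sided potential bound (\ref{assu_Aa}), one checks that for $\lambda$ small enough there exist $c_1,c_2>0$ (depending on $a,A,\lambda$) such that $c_1(|\bu|^2+|\bx|^2)\le \tilde F \le c_2(|\bu|^2+|\bx|^2)$, so that $\tilde F$ is equivalent to the quantity we want to control.

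Along a characteristic $\dot\bx=\bu$, the velocity equation in (\ref{eq}) gives
\[
\dot{\tilde F} \;=\; \bu\cdot I(\bx) \;+\; \lambda|\bu|^2 \;+\; \lambda\,\bx\cdot I(\bx) \;-\; \lambda\,\bx\cdot\nabla U(\bx),
\]
with $I(\bx):=\int\phi(|\bx-\by|)(\bu(\by)-\bu(\bx))\rho(\by)\rd\by$. The first term yields the key velocity damping: splitting $\bu(\bx)\cdot\bu(\by)\le |\bu(\bx)|\,|\bu(\by)|$ and using $\phi\ge\phi_-$ gives
\[
\bu\cdot I(\bx)\;\le\;-\phi_-m_0|\bu(\bx)|^2 \;+\; \phi_+|\bu(\bx)|\!\int\!|\bu|\rho,
\]
and the energy decay (\ref{eq:decay}) ensures $\int|\bu|\rho\le\sqrt{2m_0 E_k(t)}\le\sqrt{2m_0\Etin}$. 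The cross term $\lambda\bx\cdot I$ is bounded similarly by $\lambda\phi_+ m_0|\bx||\bu| + \lambda\phi_+|\bx|\sqrt{2m_0\Etin}$, while convexity of $U$ (combined with $U(0)=0$, which follows from $0\le U(0)\le 0$ under (\ref{assu_Aa})) gives $\bx\cdot\nabla U(\bx)\ge\frac{a}{2}|\bx|^2$.

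Weighted Young inequalities absorb the mixed terms $|\bu||\bx|$, $|\bu|$, $|\bx|$ into $-|\bu|^2$ and $-|\bx|^2$, provided $\lambda$ is chosen of order $\min\{\phi_- m_0,\,a\phi_-/\phi_+^2\}$ (up to absolute constants). This produces
\[
\dot{\tilde F} \;\le\; -\mu\, \tilde F \;+\; K, \qquad \mu,K>0,
\]
with $\mu$ and $K$ depending on $a,A,\phi_+,m_0,\Etin$ in such a way that $\sqrt{K/\mu}$ is proportional to $1/\phi_-$. Gronwall then yields $\tilde F(\bx(t),t)\le \max\{\tilde F(\bx(0),0),\,K/\mu\}$ along each characteristic; maximizing over $\bx(0)\in\supp\rin$ and converting back via the equivalence $\tilde F\asymp |\bu|^2+|\bx|^2$ gives (\ref{eq:ubound}) with $\Cplus$ comparable to $\sqrt{c_2/c_1}$ and $\Ccat$ comparable to $\phi_-\sqrt{K/\mu}$. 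The main obstacle is the cross term $\lambda\bx\cdot I$, which is unsigned and of order $|\bx||\bu|$: it competes with the only source of position damping, namely $-\lambda\bx\cdot\nabla U$. Balancing these forces $\lambda\lesssim\phi_-$, which in turn is precisely what produces the $1/\phi_-$ scaling of the asymptotic bound; a secondary technical point is that the argument uses only $E_k\le\Etin$, so it is insensitive to the nonquadratic features of $U$ beyond the two-sided bound (\ref{assu_Aa}).
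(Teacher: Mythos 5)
Your proof follows the same hypocoercivity strategy as the paper's, and the structural steps (equivalence of the modified particle energy to $|\bu|^2+|\bx|^2$, the characteristic-time-derivative computation, Young-inequality absorption, Gronwall, and the $\mathcal{O}(1/\phi_-)$ scaling of the asymptotic bound) are all sound. The one genuine difference is the choice of cross-term: you use $\lambda\,\bx\cdot\bu$, while the paper uses $c\,\bu\cdot\nabla\FP(\bx)$. This is not merely cosmetic. Your choice produces the position damping $-\lambda\,\bx\cdot\nabla\FP(\bx)$, for which you correctly observe you need $\bx\cdot\nabla\FP(\bx)\ge\frac{a}{2}|\bx|^2$; but this inequality does \emph{not} follow from \eqref{assu_Aa} alone, which controls only the magnitude $|\nabla\FP|$ and says nothing about the alignment of $\nabla\FP$ with $\bx$, so you are quietly importing convexity of $\FP$ as an extra hypothesis. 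The paper's cross-term instead produces $-c|\nabla\FP(\bx)|^2 \le -c\,a^2|\bx|^2$ directly from \eqref{assu_Aa}, at the price of generating $c\,\bu^\top\nabla^2\FP(\bx)\,\bu$, whose estimate implicitly uses a Hessian upper bound $\nabla^2\FP\le A I$ that is also not literally contained in \eqref{assu_Aa}. So the two cross-terms realize complementary trade-offs: yours avoids the Hessian entirely but requires convexity from below, the paper's avoids convexity from below but requires a Hessian bound from above; both close under the uniform convexity \eqref{eq:uconvex}, which is the section's actual target class. If you wish to argue strictly under \eqref{assu_Aa} as the proposition is phrased, you should either switch to the paper's cross-term or flag explicitly that you are adding the structural hypothesis $\bx\cdot\nabla\FP(\bx)\gtrsim|\bx|^2$.
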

\begin{remark}\label{rem:stability}
We note in passing that if $\FP$ is strictly convex potential satisfying \eqref{eq:uconvex}  then \eqref{assu_Aa} follows. Indeed, assuming without loss of generality, that $\FP$ has  a global minimum at the origin so that $\FP(0)=\nabla \FP(0)=0$, and expressing $\nabla \FP(\bx) = \int_0^1 \nabla^2 U(s\bx)\bx \rd{s}$ we find $|\nabla U(\bx)| \le \int_0^1 A|\bx| \rd{s} = A|\bx|$ while
strict convexity implies
\[
\bx\cdot \nabla \FP(\bx) = \int_0^1 \bx^\top \nabla^2 U(s\bx)\bx \rd{s} \ge a|\bx|^2 \quad \leadsto \quad |\nabla \FP(\bx)| \ge a|\bx|;
\]
moreover, expressing $U(\bx) = \int_0^1 \nabla \FP(s\bx)\cdot\bx \rd{s}$ we find
\[
\frac{a}{2}|\bx|^2 =  \int_0^1 \frac{1}{s} a|s\bx|^2 \rd{s}\leq \int_0^1 \frac{1}{s}\nabla\FP(s\bx)\cdot s\bx \rd{s}   \leq U(\bx) \le \int_0^1 A|s\bx|\cdot|\bx| \rd{s} = \frac{A}{2}|\bx|^2.
\]
Thus, the assumed bounds \eqref{assu_Aa} follow from \eqref{eq:uconvex}.  In fact, \eqref{assu_Aa} 
 allows more general scenarios than uniform convexity including, notably, more  complicated topography involving than one local minima. The flocking behavior of such scenarios are considerably more intricate, consult \cite{HS2018}.\newline 
 It is straightforward to generalize Proposition \ref{prop_infty2} to the case when \eqref{assu_Aa} only holds for sufficiently large $|\bx|$. We omit the details.
\end{remark}

%%%%%%%%%%%%%%%%%%%%%%%%%%%
\subsection{Flocking of smooth solutions with convex potentials}
%%%%%%%%%%%%%%%%%%%%%%%%%

From now on we will restrict attention to uniformly lower bounded kernels, so that $\phi$ satisfies \eqref{phiminus}, $0<\phi_-\leq \phi(\bx)\leq \phi_+$. The reduction Lemma \ref{cor_reduce} tells us that the results will automatically apply to the class of all admissible kernels which satisfy \eqref{eqs:admiss}.
We develop a hypocoercivity argument, different from the one used in the quadratic case,  which gives  the following $L^2$-flocking estimate with algebraic decay rate.
\begin{theorem}[{\bf Flocking with  $L^2$-algebraic decay}]\label{thm_flocking3}
Consider the system \eqref{eq} with uniformly convex potential \eqref{eq:uconvex}, $0< aI_{d\times d} \leq \nabla^2\FP(\bx)\leq AI_{d\times d}$ and 
with a $C^1$ admissible  interaction kernel $\phi$,  \eqref{eqs:uadmiss}.
Assume, in addition, that $\phi$ satisfies the linear stability condition
\begin{equation}\label{Kcond1}
m_0\phi(0)>  \frac{A}{\sqrt{a}}.
\end{equation}
Let $(\rho,\bu)$ be a global smooth solution subject to compactly support $\rin$.
 Then there holds flocking at algebraic rate in both velocity and position, namely, there exist a constant $C$ (with increasing dependence on $|\phi'|_\infty$) such that 
\begin{equation}\label{flocking3}
\cEt(t):= \int\int (|\bu(\bx)-\bu(\by)|^2 + a|\bx-\by|^2)\rho(\bx)\rho(\by) \rd{\bx}\rd{\by} \le \frac{C}{\sqrt{1+t}}\cEtin.
\end{equation}
\end{theorem}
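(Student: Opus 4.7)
The plan is to extend the hypocoercivity argument of Theorem~\ref{thm_flocking1} to general uniformly convex potentials. First, by Lemma~\ref{cor_reduce} combined with Proposition~\ref{prop_infty2}, I reduce to the case of an interaction kernel uniformly bounded below on the compactly supported $\rho(\cdot,t)$: $\phi_-\le\phi\le\phi_+$. I then introduce the three pairwise fluctuation moments
\begin{equation*}
E_v(t):=\iint|\bu(\bx)-\bu(\by)|^2\rho\rho,\ \ E_x(t):=\iint|\bx-\by|^2\rho\rho,\ \ E_{vx}(t):=\iint(\bu(\bx)-\bu(\by))\cdot(\bx-\by)\rho\rho,
\end{equation*}
so that $\cEt=E_v+aE_x$. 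A direct PDE computation using the symmetry of $\phi$ and the characteristic formulation gives exact identities whose right-hand sides, upon invoking the uniform convexity $aI\le\nabla^2\FP\le AI$, the kernel bounds, and Cauchy--Schwarz, reduce to the ODE system
\begin{equation*}
\dot E_x=2E_{vx},\qquad \dot E_v\le-2m_0\phi_-E_v+2A\sqrt{E_vE_x},\qquad \dot E_{vx}\le E_v-aE_x+m_0\phi_+\sqrt{E_vE_x}.
\end{equation*}

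Next I construct the hypocoercive functional $\mathcal F:=E_v+aE_x+2\delta E_{vx}$ for some $\delta\in(0,\sqrt a/2)$. The Cauchy--Schwarz estimate $|E_{vx}|\le\sqrt{E_vE_x}\le(E_v+aE_x)/(2\sqrt a)$ yields the coercivity $\tfrac12\cEt\le\mathcal F\le\tfrac32\cEt$. Summing the derivatives of the three pairwise moments and balancing the indefinite term $2aE_{vx}$ appearing in $a\dot E_x$ against the dissipative contributions in $\dot E_v$ and $2\delta\dot E_{vx}$ via Young's inequality with carefully tuned weights, the goal is a bound of the form $\dot{\mathcal F}\le-\kappa\mathcal F^3/\mathcal F_0^2$, which integrates (via $(d/dt)\mathcal F^{-2}\ge 2\kappa/\mathcal F_0^2$) to the stated algebraic rate $\mathcal F(t)\le\mathcal F_0/\sqrt{1+c\,t}$. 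A naive balance using the lower bound $\phi_-$ in place of $\phi(0)$ would yield exponential decay, but only under the stronger hypothesis $m_0\phi_->A/\sqrt a$; recovering the sharper hypothesis \eqref{Kcond1} requires a finer analysis.

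The principal obstacle is precisely this mismatch: the linear stability condition $m_0\phi(0)>A/\sqrt a$ involves the \emph{maximal} kernel value $\phi(0)$, while the hypocoercive dissipation is naturally controlled by the weaker $\phi_-$ that comes from the diameter of $\textnormal{supp}\,\rho(\cdot,t)$. To exploit the full strength of $\phi(0)$ I expect one must Taylor-expand
$\phi(|\bx-\by|)=\phi(0)+\mathcal O\!\bigl(|\phi'|_\infty\cdot|\bx-\by|\bigr)$
inside the velocity dissipation $\iint\phi(|\bx-\by|)|\bu(\bx)-\bu(\by)|^2\rho\rho$, using the uniform bound on $\textnormal{supp}\,\rho(\cdot,t)$ to control the remainder. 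The correction term is precisely of size $|\phi'|_\infty\sqrt{E_xE_v}$ (or an analogous bilinear quantity), which both explains the increasing dependence of the constant $C$ on $|\phi'|_\infty$ and degrades the would-be exponential rate to the algebraic $(1+t)^{-1/2}$. Verifying that the resulting differential inequality closes precisely under \eqref{Kcond1} is the technical heart of the proof.
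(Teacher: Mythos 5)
Your proposal correctly identifies the two central difficulties — that the hypothesis involves $\phi(0)$ rather than $\phi_-$, and that the rate is algebraic rather than exponential — but the direct ODE route you sketch cannot close, and the paper in fact abandons the single-Lyapunov-functional strategy entirely. There are three concrete problems. First, your bound $\dot E_{vx}\le E_v-aE_x+m_0\phi_+\sqrt{E_vE_x}$ discards the decisive structure: the alignment contribution to $\dot E_{vx}$ is $-m_0\iint\phi(\bx-\by)(\bx-\by)\cdot(\bu(\bx)-\bu(\by))\rho\rho$, which for constant $\phi$ equals $-KE_{vx}$ with $K=m_0\phi$, and this term must \emph{cancel} the $KE_{vx}$ coming from the $E_x$ piece of the functional (the paper makes this remark explicitly after Theorem~\ref{thm_flocking2}: the ratio of the $E_x$ and $E_{vx}$ coefficients is forced to be $K/2$, unlike in standard hypocoercivity where $\delta$ is free). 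Bounding by Cauchy–Schwarz with $\phi_+$ destroys exactly this cancellation. Second, even with the corrected cancellation and a Taylor expansion $\phi=\phi(0)+O(|\phi'|_\infty|\bx-\by|)$, the remainder in $\dot E_{vx}$ is a \emph{cubic} moment $\iint|\bx-\by|^2|\bu(\bx)-\bu(\by)|\rho\rho$; controlling it by $\sqrt{E_vE_x}$ requires the $L^\infty$ bound $u_{max}$, and then the Young-balancing step forces a \emph{smallness} condition on $|\phi'|_\infty u_{max}$ relative to $a,A,m_0\phi(0)$ — which is not part of the hypothesis. The theorem is unconditional (given \eqref{Kcond1}), with $|\phi'|_\infty$ only entering the constant. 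Third, the ODE system you wrote is homogeneous of degree one (up to square roots), so any closure yields $\dot{\mathcal F}\le-\kappa\mathcal F+(\text{linear error})$; there is no mechanism producing $\dot{\mathcal F}\lesssim-\mathcal F^3$.

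The paper circumvents all of this via an integrability-plus-localization argument rather than a single Gr\"{o}nwall bound. Step one uses the cross-term hypocoercivity \emph{only} with $\phi_-$ and only to get $\Et + c\,E_{vx}$ nonincreasing and nonnegative, hence $\int_0^\infty\cEt(t)\rd t<\infty$; this needs no stability condition. Step two (Lemma~\ref{lem_par}) is a Chebyshev argument: for any $t_1$ there is $t_0\le t_1$ with $\cEt(t_0)\lesssim 1/t_1$, and the support of $\rho(\cdot,t_0)$ then splits into a tight cluster $S_1$ with $\cEi(t_0;S_1)\le\epsilon_1\sim t_1^{-1/2}$ and a residual $S_2$ of mass $\lesssim\epsilon_1$. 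Step three (Lemma~\ref{lem_sep}) propagates the clustering of $S_1$ forward in time by working \emph{pointwise} on the Lyapunov density $F(\bx,\by,t)=\frac{K}{2}|\bx-\by|^2+(\bx-\by)\cdot(\bu(\bx)-\bu(\by))+\frac{\beta}{2}|\bu(\bx)-\bu(\by)|^2$ (the constant-$\phi$ functional of Theorem~\ref{thm_flocking2} with $K=m_0\phi(0)$), treating the $\phi'$ correction and the $S_2$ contribution as perturbations. The crucial point is that within $S_1$ the pairwise separations are already small, so the Taylor remainder is controlled by $|\phi'|_\infty\sqrt{F_\infty}$ — a genuinely small quantity — and the resulting $\dot f\le-\mu f + C_2f^{3/2}+C_3\eta\epsilon_1 f^{1/2}$ closes for small enough $\epsilon_1$, which is where the stability condition \eqref{Kcond1} enters (via the choice of $\beta$ in Theorem~\ref{thm_flocking2}). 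Your Taylor-expansion instinct is right, but it must be deployed \emph{after} localization to the cluster $S_1$, where it produces genuinely small corrections; applied globally at time zero, as in your proposal, it does not.
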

The proof of Theorem \ref{thm_flocking3} involves three ingredients. First, from the total energy estimate, we show that when $t$ is large enough, most of the agents almost concentrate as a Dirac mass, traveling at almost the same velocity. Second, for such a concentrated state, one can replace $\phi$ by the \emph{constant} kernel $\phi(0)$ without affecting the dynamics too much, which in turn implies that the agents near the Dirac mass will be attracted to it, consult theorem \ref{thm_flocking2} below. Third, this gives some monotonicity of the energy dissipation rate, which in turn gives (\ref{flocking3}).

The $L^\infty$ counterpart of Theorem \ref{thm_flocking3} is still open. If one could obtain an $L^\infty$ flocking estimate, then it might be possible to have flocking estimates for $\phi$ with thinner tails, similar to what was done in sec. \ref{sec:quadratic}.

The origin of  the stability condition \eqref{Kcond1} can be traced to the case of a \emph{constant kernel}, $\phi$, where the algebraic convergence stated in theorem \ref{thm_flocking3} is in fact improved  to exponential rate.

\begin{theorem}[{\bf Flocking with  $L^2$-exponential decay-- constant $\phi$}]\label{thm_flocking2}
Let $(\rho,\bu)$ subject to compactly supported $\rin$ be a global smooth solution of \eqref{eq} with uniformly convex potential \eqref{eq:uconvex}, $0< aI_{d\times d} \leq \nabla^2\FP(\bx)\leq AI_{d\times d}$, and assume that the interaction kernel 
$\phi$ is \underline{constant} satisfying
\begin{equation}\label{Kcond0}
m_0\phi >  \frac{A}{\sqrt{a}}
\end{equation}
Then it undergoes unconditional flocking at exponential rate in both velocity and position: there exist $\lambda>0$ and $C>0$ depend on $a,A ,m_0\phi$ such that
\begin{equation}\label{flocking}
\cEt(t) \le C\cdot \cEtin \cdot e^{-\lambda t}.
\end{equation}
\end{theorem}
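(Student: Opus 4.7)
The plan is to exploit the two drastic simplifications afforded by the constant-kernel assumption. First, the alignment integral collapses to the algebraic term $m_0\phi(\bar{\bu}(t) - \bu(\bx))$ with $\bar{\bu}(t) := m_0^{-1}\int\bu\rho\,\rd\bx$, so the mean $\bar\bu$ drops out of the \emph{pairwise relative} dynamics. Tracking two characteristics with positions and velocities $(X_i(t),U_i(t))$ for $i=1,2$, and writing $D\bx := X_1 - X_2$, $D\bu := U_1 - U_2$, we are led to the coupled damped oscillator
\[
\frac{d}{dt}D\bx = D\bu, \qquad \frac{d}{dt}D\bu = -m_0\phi\, D\bu - M(t)\,D\bx,
\]
where $M(t) := \int_0^1 \nabla^2 \FP(X_2 + sD\bx)\,\rd s$ satisfies the uniform ellipticity $aI \leq M(t) \leq AI$ from \eqref{eq:uconvex}. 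Second, integrating quadratic functionals of $(D\bx,D\bu)$ against the Lagrangian product measure $\rho_0\otimes\rho_0$ and pushing forward to $\rho\otimes\rho$ closes the pairwise ODE into a system for the three moments
\[
P := \int\!\!\int |D\bu|^2\rho\rho\, \rd\bx\rd\by, \quad Q := a\!\int\!\!\int |D\bx|^2\rho\rho\,\rd\bx\rd\by, \quad R := \int\!\!\int D\bx\cdot D\bu\,\rho\rho\,\rd\bx\rd\by,
\]
with $\cEt = P + Q$. Differentiating under the integral and substituting the pairwise ODE yields
\[
\dot P = -2m_0\phi P - 2\!\int\!\!\int\! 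D\bu\cdot MD\bx\,\rho\rho, \quad \dot Q = 2aR, \quad \dot R = P - m_0\phi R - \!\int\!\!\int\! D\bx\cdot MD\bx\,\rho\rho.
\]

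Next, I will construct the hypocoercive Lyapunov functional $L(t) := P + B\,Q + 2\mu R$ with positive parameters $B,\mu$ to be tuned. Convexity gives $\int\!\int D\bx\cdot MD\bx\,\rho\rho \geq Q$, while $|MD\bx| \leq A|D\bx|$ combined with Cauchy--Schwarz yields $|2\!\int\!\int\! D\bu\cdot MD\bx\,\rho\rho| \leq (2A/\sqrt{a})\sqrt{PQ}$. Assembling these estimates,
\[
\dot L \leq (-2m_0\phi + 2\mu)P + (2aB - 2\mu m_0\phi)R + \frac{2A}{\sqrt{a}}\sqrt{PQ} - 2\mu Q.
\]
The choice $B := \mu m_0\phi/a$ annihilates the indefinite $R$-coefficient, leaving a quadratic form in $(\sqrt P,\sqrt Q)$ whose negative-definiteness is equivalent to $4\mu(m_0\phi-\mu) > A^2/a$. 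Maximizing the left side at $\mu = m_0\phi/2$ produces exactly $(m_0\phi)^2 > A^2/a$, which is the stability condition \eqref{Kcond0}. For $\mu$ in a neighborhood of $m_0\phi/2$, the Cauchy--Schwarz bound $|R| \leq \sqrt{PQ/a}$ ensures the equivalence $c_1\cEt \leq L \leq c_2\cEt$ with $c_i = c_i(a,A,m_0\phi) > 0$, so $\dot L \leq -\lambda L$ for some $\lambda = \lambda(a,A,m_0\phi) > 0$, and Gr\"onwall delivers the claimed decay $\cEt(t) \leq C\,\cEtin\,e^{-\lambda t}$.

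The principal difficulty is that the Hessian $M(t)$ wanders freely through the matrix cone $[aI,AI]$ along every pair-trajectory, precluding any explicit diagonalization or eigen-based reasoning. Two ingredients are crucial for the calculation to close at the sharp threshold. First, the particular weight $B = \mu m_0\phi/a$ eliminates the indefinite cross-term $R$ independently of $M$. Second, on the remaining cross term $\int\!\int D\bu\cdot MD\bx\,\rho\rho$, estimating $|MD\bx| \leq A|D\bx|$ \emph{before} invoking Cauchy--Schwarz yields the sharp coefficient $A/\sqrt{a}$; a naive AM--GM split of $|D\bu\cdot MD\bx|$ would produce only the coarser condition $m_0\phi > A/a$ and miss the threshold $m_0\phi > A/\sqrt{a}$.
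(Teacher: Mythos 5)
Your proposal is correct and is essentially the same hypocoercivity argument as the paper's: both construct a quadratic Lyapunov functional in the pairwise differences $(D\bx,D\bu)$ whose position-squared coefficient is locked to the cross-term coefficient (your $B=\mu m_0\phi/a$ is exactly the fixed ratio in the paper's $F=\frac{K}{2}|\bx-\by|^2+\langle\bx-\by,\bu(\bx)-\bu(\by)\rangle+\frac{\beta}{2}|\bu(\bx)-\bu(\by)|^2$) so that the indefinite $m_0\phi\,\langle D\bx,D\bu\rangle$ contribution cancels exactly, after which negative-definiteness of the surviving quadratic form in $(\sqrt P,\sqrt Q)$ delivers precisely the threshold $m_0\phi>A/\sqrt a$; your three-moment ODE system $(P,Q,R)$ is a clean repackaging of the same calculation. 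One small caveat on your closing remark: the claim that a ``naive AM--GM split'' would yield the weaker condition $m_0\phi>A/a$ is not right (that inequality is not even dimensionally consistent) --- a Young inequality applied after the split, with its free parameter optimized, recovers the same threshold $A/\sqrt a$; the essential point is simply that the Young parameter must be tuned, not that the split itself is inherently lossy.
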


\begin{remark}
One may wonder about the necessity of the stability condition \eqref{Kcond1}. In fact, already in the simplest case of a constant $\phi$ where the  Cucker-Smale \eqref{CS_par1} is reduced to   
\begin{equation}
\left\{\begin{split}
& \dot{\bx}_i = \bv_i \\
& \dot{\bv}_i = \phi\cdot(\bar{\bv}-\bv_i) - \nabla\FP(\bx_i)
\end{split}\right.\qquad \bar{\bv}:=\frac{1}{N}\sum_j \bv_j,
\end{equation}
one may encounter 'orbital instability', where arbitrarily small initial fluctuations 
$|\bx_i(0)-\bx_j(0)|+|\bv_i(0)-\bv_j(0)|$ subject to 1d \underline{non-convex} potential may grow to be ${\mathcal O}(1)$ at some time, \cite{HS2018}. The stability condition \eqref{Kcond1} guarantees, in the case of convex potentials, strong enough alignment that prevents scattering and eventual flocking. The question of the precise necessary stability condition  vis a vis convexity remains open. 
\end{remark}

%%%%%%%%%%%%%%%%%%%%%%%%%%%%%%%%%%
\section{Existence of global smooth solutions}\label{sec:existence}
%%%%%%%%%%%%%%%%%%%%%%%%%%%%%%%%%%%%%
According to proposition \ref{prop_infty2}, convex potentials guarantee that 
the reduction lemma  \ref{cor_reduce} holds, hence we can focus our attention, without loss of generality, on lower-bounded kernels such that $\phi_-=\min \phi(\cdot)>0$.  

\subsection{Existence of 1D solutions with general convex potentials}
We begin with  one-dimension (for which $\bu,\bx$ are scalars, written as $u,x$). The 1D setup is covered in the next two theorems,  where we\newline
 (i)  guarantee the existence of global smooth solution for a class of sub-critical initial configurations; and\newline
  (ii) guarantee a finite time blow-up for a class of super-critical initial configurations.\newline
  
\begin{theorem}[{\bf Global smooth solutions --- 1D problem}]\label{thm_1dsmooth}
Let the space dimension $d=1$. Assume $\FP''$ is bounded 
\begin{equation}\label{1d_assu1}
a\leq \FP''(x)\le A,\quad \forall x\in\Omega
\end{equation}
with $A$ being a constant satisfying
\begin{equation}\label{1d_assu2}
A < \frac{(m_0\phi_-)^2}{4}.
\end{equation}
Further assume that
\begin{equation}\label{1d_assu3}
\max_{x\in\textnormal{supp }\rin}(\partial_x \uin(x) + (\phi*\rin)(x)) >\frac{m_0\phi_-}{2} - \sqrt{\frac{(m_0\phi_-)^2}{4} - A}
\end{equation}
then \eqref{eq} admits global smooth solution.
\end{theorem}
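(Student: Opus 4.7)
The natural route is to derive a Riccati-type ODE along characteristics for the threshold quantity $d := \partial_x u + \phi * \rho$, and then trap $d$ in a bounded interval by scalar comparison with an auxiliary parabola, in the spirit of \cite{TT2014,CCTT2016} adapted to the presence of the forcing $\FP''$.

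\textbf{Step 1 (Riccati ODE along characteristics).} Differentiating the 1D momentum equation in $x$ and using the continuity-equation identity $\partial_x(\phi*(u\rho)) = -\partial_t(\phi*\rho)$, one obtains after the usual cancellations, along the characteristic $\dot{x}(t)=u(x(t),t)$ with material derivative $D_t:=\partial_t+u\partial_x$,
\[
D_t d \;=\; -d^{\,2} \;+\; d\,(\phi*\rho)(x(t),t) \;-\; \FP''(x(t)).
\]
Since $\partial_x u = d - \phi*\rho$, any uniform bound on $d$ (together with the trivial bound on $\phi*\rho$) produces a uniform bound on $\partial_x u$, which is exactly what is needed to prevent shock formation.

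\textbf{Step 2 (scalar phase-line comparison).} By Lemma \ref{cor_reduce} we may assume without loss of generality that $\phi\ge\phi_->0$; conservation of mass then gives $m_0\phi_-\le(\phi*\rho)(x,t)\le m_0\phi_+$. Combined with \eqref{1d_assu1}, whenever $d\ge 0$ one has the two one-sided comparisons
\[
-d^{\,2} + (m_0\phi_-)\,d - A \;\le\; \dot d \;\le\; -d^{\,2} + (m_0\phi_+)\,d - a.
\]
The sub-criticality hypothesis \eqref{1d_assu2}, $A<(m_0\phi_-)^2/4$, ensures that the concave parabola $f(d):=-d^{\,2}+(m_0\phi_-)d-A$ has two nonnegative real roots
\[
d_{\pm}^{\ast} \;=\; \tfrac{m_0\phi_-}{2} \;\pm\; \sqrt{\tfrac{(m_0\phi_-)^2}{4}-A},
\]
which are precisely the equilibria appearing in \eqref{1d_assu3}. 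Since $f(d_-^\ast)=0$, the unstable equilibrium $d_-^\ast$ of the comparison ODE acts as a lower barrier: whenever the initial datum satisfies $d(\cdot,0)>d_-^\ast$, the trajectory remains above $d_-^\ast$ for all time. The upper inequality similarly provides a global upper barrier (e.g. $d\le\max\{d(\cdot,0),m_0\phi_+\}$), so that $d$ stays in a fixed compact subinterval of $(d_-^\ast,\infty)$.

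\textbf{Step 3 (from bounded $d$ to global smoothness).} Boundedness of $d$ together with $|\phi*\rho|\le m_0\phi_+$ yields a uniform bound on $\partial_x u$, so the continuity equation $D_t\rho=-\rho\,\partial_x u$ rules out both concentration and cavitation on any finite time interval. Combined with the uniform bound on $\textnormal{supp}\,\rho(\cdot,t)$ supplied by Proposition \ref{prop_infty2}, the standard symmetric-hyperbolic local well-posedness plus a continuation principle then yields a global smooth solution.

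\textbf{Main obstacle.} The comparison argument is inherently \emph{pointwise}: the ODE for $d$ is decoupled along each characteristic, so the sub-critical barrier condition has to hold at every $x\in\textnormal{supp}\,\rin$ for the phase-line trap to close. This is the content of \eqref{1d_assu3}, read pointwise against the worst-case coefficients $m_0\phi_-$ and $A$. A subtler technical point is that $(\phi*\rho)(x(t),t)$ and $\FP''(x(t))$ genuinely evolve along the trajectory, so the comparison is with a time-dependent quadratic rather than with an autonomous ODE; it is precisely the margin built into \eqref{1d_assu2} that uniformizes this dependence and lets the barrier $d_-^\ast$ be crossed by no characteristic.
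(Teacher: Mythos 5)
Your argument reproduces the paper's proof essentially verbatim: you derive the same Riccati equation $D_t\texttt{e} = -\texttt{e}^2 + (\phi*\rho)\texttt{e} - \FP''$ for $\texttt{e}=\partial_x u + \phi*\rho$ along characteristics, bound $\phi*\rho$ and $\FP''$ by their extremes, and trap $\texttt{e}$ by the same lower barrier $d_-^\ast$ via a phase-line comparison. Two small inaccuracies: the upper barrier you quote, $\max\{d(\cdot,0),m_0\phi_+\}$, fails when $a<0$ because the parabola $-d^2+(m_0\phi_+)d-a$ is positive at $d=m_0\phi_+$ — the paper instead uses $\texttt{e}\ge 2m_0\phi_+ \Rightarrow \texttt{e}'\le -\texttt{e}^2/2 - a$ to obtain $\texttt{e}\le\max\{\max_x\texttt{e}_0,\,2m_0\phi_+,\,\sqrt{\max\{0,-2a\}}\}$; and the appeal to Proposition \ref{prop_infty2} in Step 3 is both unavailable for $a\le 0$ and unnecessary, since the uniform bound on $\partial_x u$ alone suffices for the continuation argument.
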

Observe that the statement of theorem \ref{thm_1dsmooth} is independent of the lower-bound $a$, whether positive of negative: its only role  enters in the upper-bound of 
\[
\max u_x(\cdot,t) \lesssim \max\Big\{c_0(\max_x u'_0, m_0,\phi_+),\sqrt{\max\{0,-2a\}}\Big\}.
\]
\begin{theorem}[{\bf Finite-time blow-up --- 1D problem}]\label{thm_1dblowup}
Assume $\displaystyle \FP''(x)\ge a,\  \forall x\in\Omega$. The 1D problem \eqref{eq} admits finite-time blow-up under the following circumstances.

\noindent
{\rm (i)}  If $a$ is  large enough so that 
\begin{equation}\label{assuB_1}
a > \frac{(m_0\phi_+)^2}{4}, 
\end{equation}
 then there is unconditional blowup: $\partial_x u$ blows up to $-\infty$ in finite time for any initial data.\newline
Otherwise, blow-up occurs  if the initial data is super-critical in one of the following two configurations:

\smallskip\noindent
{\rm (ii)} If $a>0$ is not large enough for \eqref{assuB_1} to hold\footnote{Notice that in this condition the RHS in \eqref{assuB_2} is positive.}, then $\partial_x u$ blows up to $-\infty$ in finite time if there exists $x\in \Omega$ such that
\begin{equation}\label{assuB_2}
\partial_x \uin(x) + (\phi*\rin)(x) < \frac{m_0\phi_+}{2} - \sqrt{\frac{(m_0\phi_+)^2}{4} - a}.
\end{equation}

\smallskip\noindent
{\rm (iii)} If $a\le 0$, then $\partial_x u$ blows up to $-\infty$ in finite time if there exists $x\in \Omega$ such that\footnote{Notice that in this condition the RHS of \eqref{assuB_3} is negative.}
\begin{equation}\label{assuB_3}
\partial_x \uin(x) + (\phi*\rin)(x) < \frac{m_0\phi_-}{2} - \sqrt{\frac{(m_0\phi_-)^2}{4} - a}.
\end{equation} 

\end{theorem}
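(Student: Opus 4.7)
The plan is to reduce the blow-up question to a scalar Riccati inequality along characteristics. Define
\[
e(t) := \partial_x u(x(t),t) + (\phi*\rho)(x(t),t), \qquad \dot{x}(t)=u(x(t),t),
\]
the standard ``$e$-quantity'' of the critical threshold analysis in \cite{CCTT2016,TT2014}. I will first show that, with the external force present, $e$ obeys the closed scalar ODE
\[
\dot{e} \;=\; -e^2 + e\,(\phi*\rho) \;-\; \FP''(x).
\]
The derivation repeats the cancellation that produces the potential-free ODE: differentiating the momentum equation in $x$ and evaluating along the characteristic gives
\[
(\partial_x u)' = -(\partial_x u)^2 - (\partial_x u)(\phi*\rho) + \int \partial_x\phi(|x-y|)(u(y)-u(x))\rho(y)\rd y - \FP''(x),
\]
while the continuity equation, integration by parts in $y$ (using $\partial_y\phi=-\partial_x\phi$), and $\dot{x}=u(x)$ yield
\[
(\phi*\rho)' = -\int \partial_x\phi(|x-y|)(u(y)-u(x))\rho(y)\rd y.
\]
Adding these two identities cancels the nonlocal integral, and substituting $\partial_x u = e-\phi*\rho$ rewrites the sum as the claimed Riccati ODE.

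The next step is to extract barrier inequalities from the pointwise bounds $a\le \FP''(x)$ and $m_0\phi_-\le \phi*\rho\le m_0\phi_+$. Since $\phi*\rho>0$, the sign of $e$ dictates which of the two bounds on $\phi*\rho$ gives the sharper estimate for $e\,(\phi*\rho)$, producing the pair of barriers
\[
\dot{e} \;\le\;
\begin{cases}
-e^2 + m_0\phi_+\, e - a, & e\ge 0,\\
-e^2 + m_0\phi_-\, e - a, & e\le 0.
\end{cases}
\]
The roots of these two quadratics are exactly the thresholds appearing in \eqref{assuB_1}--\eqref{assuB_3}. In case (i), the hypothesis $a>(m_0\phi_+)^2/4$ makes the upper parabola strictly negative with supremum $(m_0\phi_+)^2/4-a<0$, while the lower parabola is $\le -a<0$; hence $\dot{e}\le -\delta$ for some $\delta>0$ regardless of the sign of $e$, so $e$ decreases at linear rate and must become negative in finite time for \emph{any} initial data. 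In case (ii), the hypothesis \eqref{assuB_2} places $e(0)$ strictly below the smaller root of the upper parabola, so that $\dot{e}<0$ on the interval between this root and $0$; this drives $e$ across zero in finite time, after which the lower barrier yields $\dot{e}\le-a<0$. In case (iii), the hypothesis \eqref{assuB_3} places $e(0)$ already below the (nonpositive) smaller root $\tfrac12\bigl(m_0\phi_--\sqrt{(m_0\phi_-)^2-4a}\bigr)$ of the lower parabola, and monotonicity of that parabola keeps $e$ below this threshold thereafter, with $\dot{e}$ bounded away from zero.

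In every case $e$ becomes and remains negative in finite time. Once $e<0$, the inequality $e(\phi*\rho)\le 0$ reduces the Riccati bound to $\dot{e}\le -e^2 - a$; comparison with the scalar ODE $\dot{y}=-y^2/2$ (valid as soon as $|e|\ge\sqrt{2|a|}$) drives $e(t)\to-\infty$ at some finite time $T^*$. Since $\partial_x u = e - \phi*\rho \le e$ because $\phi*\rho\ge 0$, this forces $\partial_x u\to-\infty$ by time $T^*$, which is the claimed blow-up. The main technical step is verifying the cancellation that produces the clean scalar Riccati ODE for $e$; the ensuing case analysis is a routine one-variable comparison, whose only subtlety is tracking the sign of $e$ in case (ii), where $e(0)$ may be positive and must first be driven below zero under the $m_0\phi_+$ barrier before the $m_0\phi_-$ barrier takes over.
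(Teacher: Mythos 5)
Your proof is correct and follows essentially the same route as the paper's: the same closed Riccati ODE for $e=\partial_x u + \phi*\rho$ along characteristics, the same sign-dependent barriers using $m_0\phi_+$ when $e\ge 0$ and $m_0\phi_-$ when $e<0$, the same identification of the parabola roots with the critical thresholds in (\ref{assuB_1})--(\ref{assuB_3}), and the same Riccati blow-up once $e$ is driven sufficiently negative. The only cosmetic differences are that you rederive the $e$-equation from scratch (the paper derives it once in the proof of the preceding regularity theorem and simply references equation (\ref{de1d}) here), and you spell out the terminal comparison with $\dot y=-y^2/2$ which the paper leaves as an informal remark.
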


Note that in the potential-free case $\FP=0$,  theorems \ref{thm_1dsmooth} and\ref{thm_1dblowup} amount to  the  sharp threshold condition  $\partial_x \uin(x) + (\phi*\rin)(x)\geq 0$ which is  necessary and sufficient for global 1D regularity, see~\cite{CCTT2016,ST2017a}. 
When the external potential $\FP$ is added, these theorems indicate  that convex $\FP$ enhances the scenario of blowup in \eqref{eq}, while concave $\FP$'s makes more restrictive scenarios for possible blow up. In other words, {\it the size of $\FP''$} determines the influence of the external potential on the threshold for the existence of global smooth solution. 

It is also interesting to see that the flocking phenomena is {\it not relevant} for the existence of global smooth solution. In fact, (\ref{1d_assu1}) does not require $\FP$ to be confining, i.e., $\lim_{|x|\rightarrow\infty} \FP(x) = \infty$. Even if $\FP$ is confining, it may happen that flocking phenomena do not happen at a rate which is uniform in initial data, see the 'orbital instability' examples in~\cite{HS2018}. All these complications do not affect the existence of global smooth solutions at all.

%%%%%%%%%%%%%%%%%%%%%%%%%
\subsection{Existence of 2D solutions with quadratic potentials}
%%%%%%%%%%%%%%%%%%%%%%%%%
We state our results on the critical thresholds for the existence of global smooth solution, for two space dimensions, for quadratic potentials.  
\begin{theorem}[{\bf Global smooth solutions with 2D quadratic potential}]\label{thm_2dsmooth1}
Consider the two-dimensional system \eqref{eq:quadratic} subject to initial data $(\rho_0,\bu_0)$. 
Let $(\eta_S)_0$ denote the spectral gap -- the difference between the two eigenvalues of the symmetric matrix $\nabla_S\bu_0:=\nicefrac{1}{2}(\nabla \bu_0 + (\nabla \bu_0)^\top)$. 
Assume that the initial data are sub-critical in the sense that the following holds
(in terms of  $\lambda$  given in \eqref{lambda} and $|\phi'|_\infty$)
\begin{align}
c_1^2 := m_0^2\phi_-^2 - \left(\max_{\bx\in\textnormal{supp\,}\rin}|(\eta_S)_0(\bx)| + \Cmeow \cdot\sqrt{\cEiin}\right)^2 \!\!- \!\!4a &> 0, \quad \Cmeow  := \frac{64}{\lambda}m_0|\phi'|_\infty\sqrt{\Ckitty } \label{c1}\\
\max_{\bx\in\textnormal{supp\,}\rin}(\nabla \cdot \buin(\bx) + (\phi*\rin)(\bx)) &\ge 0. \label{c1_cond1}
\end{align}
Then \eqref{eq} admits global smooth solution.
\end{theorem}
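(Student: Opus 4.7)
My plan is to extend the critical-threshold methodology developed in \cite{HeT2017} for the potential-free 2D alignment system, exploiting the special Galilean structure of the quadratic potential. The central observation---already flagged in the introduction---is that since $-\nabla U(\bx) = -a\bx$, taking the spatial gradient of the momentum equation contributes only the scalar matrix $-aI$ to the evolution of $M := \nabla \bu$; consequently, the potential does \emph{not} affect the dynamics of the \emph{spectral gap} of the symmetric part $\nabla_S\bu$, but only shifts the trace-type Riccati equation by a constant.

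\textbf{Step 1 (matrix Riccati and spectral-gap equation).} I would first differentiate the momentum equation in \eqref{eq:quadratic} with respect to $\bx$ to obtain, along the characteristic $\dot\bx = \bu$,
\begin{equation*}
M' + M^2 + e\, M = R - aI, \qquad e := \phi*\rho, \quad R := \nabla\bigl(\phi*(\rho\bu)\bigr) - \bu\otimes\nabla e.
\end{equation*}
I would then decompose $M = S + \Omega$ into symmetric $S = \nabla_S\bu$ and antisymmetric $\Omega$ parts, introducing the eigenvalues $d_1, d_2$ of $S$, the divergence $d = d_1+d_2$, the scalar vorticity $\omega$, and the spectral gap $\eta_S = d_1 - d_2$. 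Projecting onto the symmetric traceless direction kills the scalar matrix $-aI$, so $\eta_S$ satisfies \emph{the same} ODE as in the potential-free case of \cite{HeT2017}, namely $\eta_S' + d\,\eta_S = \Phi_S[R]$, while the $-2a$ shift appears only in the trace equation.

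\textbf{Step 2 (uniform bound on $\eta_S$ via $L^\infty$-flocking).} The forcing $\Phi_S[R]$ is the quantity I need to control. Since $\textnormal{supp}\,\rho(\cdot,t)$ is uniformly bounded by Lemma \ref{lem_grow}, I would use the pointwise estimate $|R(\bx,t)| \lesssim m_0\,|\phi'|_\infty\, \sqrt{\cEi(t)}$ together with Theorem \ref{prop_infty1}, which gives $\sqrt{\cEi(t)} \leq \sqrt{\Ckitty\,\cEiin}\,e^{-\mixed t/4}$, to produce a time-integrable forcing. A Duhamel-type bound on the linear spectral-gap equation then delivers
\begin{equation*}
\max_{\bx\in\textnormal{supp }\rho(\cdot,t)}|\eta_S(\bx,t)| \;\leq\; \max_{\bx\in\textnormal{supp }\rin}|(\eta_S)_0(\bx)| + \Cmeow\sqrt{\cEiin}, \qquad \forall\, t\geq 0,
\end{equation*}
which is precisely the quantity absorbed in hypothesis \eqref{c1}; the explicit constant $\Cmeow = \frac{64}{\mixed}m_0|\phi'|_\infty\sqrt{\Ckitty}$ emerges from tracking the time integral of $e^{-\mixed t/4}$.

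\textbf{Step 3 (scalar Riccati for $d+e$ and closing the bound).} Combining the continuity-type equation satisfied by $e = \phi*\rho$ along characteristics with the trace of the matrix Riccati (the standard computation in \cite{HeT2017}), the quantity $D := d+e$ satisfies a scalar Riccati inequality whose coefficients involve $\eta_S$, $\omega$, $e$, the source $R$, and---new to our setting---the additive shift $-2a$. Using the uniform bound on $\eta_S$ from Step 2, together with $e\geq m_0\phi_-$ on $\textnormal{supp }\rho$ (valid because $\phi$ is bounded below on the uniformly bounded support), the associated quadratic has discriminant \emph{exactly} $c_1^2$. Hypothesis \eqref{c1} guarantees this discriminant is positive, so the Riccati admits two real equilibrium roots; the sub-critical condition \eqref{c1_cond1} places $D|_{t=0}$ between these roots, and the Riccati dynamics then keeps $D$ trapped there for all time, giving a uniform bound on $d = D-e$ and hence on $\nabla \bu$. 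A standard continuation argument completes the proof.

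\textbf{Main obstacle.} The most delicate ingredient is the control of the commutator-type source $R$ in Step 2: its time-integrability is achieved only because of the \emph{$L^\infty$-exponential} flocking of Theorem \ref{prop_infty1}. A merely $L^2$-exponential rate would not suffice to bound $\eta_S$ uniformly in time, which is exactly why the remark following \eqref{cEinfty} emphasizes this improvement as \emph{crucial} for the 2D regularity theory. Carefully tracking the precise constant $\Cmeow$, and verifying that the new $-2a$ shift in the scalar Riccati does not destabilize the trapping between the two roots $\frac{m_0\phi_-\pm c_1}{2}$, are the main technical hurdles.
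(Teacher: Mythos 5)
Your proposal takes essentially the paper's route: a matrix Riccati equation for $M=\nabla\bu$ along characteristics, the observation that $\nabla^2 \FP = aI$ drops out of the spectral-gap dynamics, a time-integrable forcing $q$ furnished by the $L^\infty$-exponential flocking of Theorem \ref{prop_infty1} (giving precisely $\Cmeow = \frac{64}{\lambda}m_0|\phi'|_\infty\sqrt{\Ckitty}$ from $\int_0^\infty e^{-\lambda t/4}\rd{t}=4/\lambda$), and a scalar Riccati inequality for $\texttt{e}:=\nabla\cdot\bu+\phi*\rho$. Two slips, however, need correcting.

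First, the spectral-gap equation you write, $\eta_S' + d\,\eta_S = \Phi_S[R]$, has the wrong damping coefficient. Since $S$ satisfies $S' + S^2 + (\phi*\rho)S = \omega^2 I + R_{sym} - \nabla^2\FP$, the difference of the eigenvalue equations yields $\eta_S' + \texttt{e}\,\eta_S = q$ with $\texttt{e} = d + \phi*\rho$, not $d$. This is not cosmetic: $d$ alone is unsigned, whereas hypothesis \eqref{c1_cond1} together with the Riccati bound propagates $\texttt{e}\ge 0$, which is exactly what makes the damping term harmless and gives $|\eta_S(t)|\le|\eta_S(0)|+\int_0^t |q|\rd{s}$; a Duhamel bound with an unsigned coefficient would not produce your claimed uniform constant. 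Second, the closing step is not a two-sided trapping between the roots $\frac{m_0\phi_-\pm c_1}{2}$ (those are the 1D thresholds of Theorem \ref{thm_1dsmooth}). In 2D the lower Riccati inequality reads $\texttt{e}'\ge \tfrac{1}{2}(c_1^2-\texttt{e}^2)$, which propagates $\texttt{e}\ge 0$ because $\texttt{e}'>0$ at $\texttt{e}=0$; the \emph{upper} bound on $\texttt{e}$ comes separately from the vorticity equation $\omega'+\texttt{e}\,\omega=\tfrac{1}{2}\text{Tr}(JR)$ together with $\Delta\FP = 2a>0$, giving $\texttt{e}'\le\tfrac{1}{2}(4\omega_{\max}^2+m_0^2\phi_+^2-\texttt{e}^2)$. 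Finally, be explicit that the whole argument is a bootstrap: the $\eta_S$ bound requires $\texttt{e}\ge 0$ and the propagation of $\texttt{e}\ge 0$ requires the $\eta_S$ bound, so one must run the estimates on the maximal interval where $\texttt{e}\ge 0$ and verify neither bound can fail first.
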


This result can be viewed as a generalization of the main result of~\cite{HeT2017}. Compared to the latter, besides the pointwise smallness requirements for $\eta_S$, the $L^\infty$ variation of $\bu$, and the quantity $\nabla \cdot \buin + (\phi*\rin)$, we also require the smallness of the $L^\infty$ variation in $\bx$, see (\ref{cEinfty}), the definition of $\cEi$. This is because the effect of the external potential may convert variation in $\bx$ into variation in $\bu$ of the same order after some time.

For $a\rightarrow 0$, one has $\Cmeow = {\mathcal O}(a^{-3/2})$, and for $a\rightarrow\infty$, one has $\Cmeow = {\mathcal O}(1)$. Therefore, the condition (\ref{c1}) cannot hold if $a$ is either too small (the $\Cmeow$ term will blow up) or too large (the $4a$ term will blow up). Intuitively speaking, the reason for blow-up in the first case is that one does not have a good flocking estimate, and thus the velocity variation may affect the dynamics of $\nabla \bu$ in an uncontrollable way. The reason in the second case is similar to the 1d case: a 'very convex' potential tends to induce blow-up directly. Therefore, in order to guarantee the existence of two-dimensional global smooth solution, one first needs $m_0\phi_-$ large enough, and then taking moderately size $a$ will satisfy (\ref{c1}), if the initial data is well-chosen ($\eta_S$, $\cEi$ not too large and  $\nabla \cdot \buin + (\phi*\rin)$ non-negative). 

%%%%%%%%%%%%%%%%%%%%%%%%%%%%%%%%%%
\subsection{Existence of 2D solutions with general convex potentials}
%%%%%%%%%%%%%%%%%%%%%%%%%%%%%
For the existence of global smooth solution for general external potentials, one difficulty is as follows: a critical property of the quadratic potential used in the proof of Theorem \ref{thm_2dsmooth1} is that it has no effect on the dynamics of $\eta_S$ (which is a crucial ingredient of the proof), since the Hessian $\nabla^2 \FP$ is constant multiple of the identity matrix. However, this is not true in general, and the effect of the external potential on $\eta_S$ can be as large as the distance between the two eigenvalues of $\nabla^2 \FP$. Another difficulty is that for many cases of $\FP$ we do not have a large time flocking estimate, and the contribution from the variation of $\bu$ to the dynamics of $\eta_S$ may accumulate over time. Interestingly, we discover that both issues can be resolved by requiring slightly strengthening the critical threshold (as in \cite{TW2008}): instead of requiring the quantity $\nabla \cdot \buin + \phi*\rin$  nonnegative, we require it to have a positive lower bound. (In fact, one expects the second difficulty not to be essential, since the 1d case suggests that flocking estimates should not be a necessary ingredient for the existence of global smooth solution.)

\medskip

\begin{theorem}[{\bf Global 2D smooth solutions with  convex potential}]\label{thm_2dsmooth2}
Consider the two-dimensional system \eqref{eq} subject to initial data $(\rho_0,\bu_0)$, with external potential $\FP$ being sub-quadratic:
\begin{equation}\label{assu_A}
|\nabla^2\FP(\bx)|\le A.
\end{equation}
Assume  the apriori uniform bound on the velocity field holds,
\begin{equation}\label{umax}
\max_{t\ge 0,\,\bx\in\textnormal{supp }\rho(\cdot,t)}|\bu(\bx,t)| \le u_{max} <\infty.
\end{equation}
If the initial data, $(\rho_0,\bu_0)$, are sub-critical in the sense that the following holds 
\begin{align}
\Cmi  := 8|\phi'|_{\infty} m_0 u_{max} + 2A & < \frac{m_0^2\phi_-^2}{2} - 2A =: \CA, \label{Cmi}\\
\max_{\bx\in\textnormal{supp }\rin} |(\eta_S)_0(\bx)| & \le \sqrt{\CA + \sqrt{\CA^2-\Cmi ^2}}, \label{etaS_cond2}\\
\max_{\bx\in\textnormal{supp }\rin} (\nabla \cdot \buin(\bx) + (\phi*\rin)(\bx)) &> \sqrt{\CA - \sqrt{\CA^2-\Cmi ^2}},\label{c1_cond2}
\end{align}
then \eqref{eq} admits global smooth solution.
\end{theorem}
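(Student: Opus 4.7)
The plan is to adapt the critical-threshold / invariant-region argument of Theorem \ref{thm_2dsmooth1} and \cite{HeT2017} to the convex setting, while carefully tracking the extra contribution of $\nabla^2\FP$ to the spectral-gap dynamics. By Lemma \ref{cor_reduce} together with Proposition \ref{prop_infty2} (which apply since \eqref{assu_A} and the assumed velocity bound \eqref{umax} supply the required uniform bound \eqref{eq:ubound}), we may assume without loss of generality that $\phi(r)\ge\phi_->0$, so that $e(\bx,t):=(\phi*\rho)(\bx,t)\in[m_0\phi_-,m_0\phi_+]$ on $\textnormal{supp}\,\rho(\cdot,t)$. Set $d:=\nabla\cdot\bu+e$, let $S$ denote the symmetric part of $\nabla\bu$ with spectral gap $\eta_S$, and let $\omega=\tfrac12(\partial_1u_2-\partial_2u_1)$ be the scalar vorticity; in 2D the antisymmetric part of $\nabla\bu$ squares to $-\omega^2 I$. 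Differentiating the momentum equation gives the matrix equation $(\partial_t+\bu\!\cdot\!\nabla)\nabla\bu+(\nabla\bu)^2+e\nabla\bu=N-\nabla^2\FP$, with nonlocal commutator
\[
N_{ij}(\bx,t):=\int\partial_{x_j}\phi(|\bx-\by|)\bigl(u_i(\by,t)-u_i(\bx,t)\bigr)\rho(\by,t)\,\rd\by.
\]
Combining the trace of this equation with $e'=-\mathrm{tr}\,N$ (obtained from continuity) makes the commutator cancel out of the equation for $d$, leaving along characteristics the clean identity
\[
d'=-\tfrac12 d^2+\tfrac12 e^2-\tfrac12 \eta_S^2+2\omega^2-\Delta\FP.
\]
Symmetrizing and applying the Rayleigh quotient to the two extremal eigenvalues of $S$ then yields
\[
|\eta_S'|\;\le\;|d\,\eta_S|+\mathrm{spread}\bigl(\mathrm{Sym}\,N-\nabla^2\FP\bigr)\;\le\;|d\,\eta_S|+\Cmi,
\]
where the last inequality uses $\|N\|_{\mathrm{op}}\le 4|\phi'|_\infty u_{max}m_0$ (from \eqref{umax}) and the eigenvalue spread of $\nabla^2\FP$ bounded by $2A$ (from \eqref{assu_A}).

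The core of the argument is an invariant-region analysis in the $(d,\eta_S)$-plane. Using $e^2\ge m_0^2\phi_-^2$, $-\Delta\FP\ge-2A$, and discarding the nonnegative term $2\omega^2$, the $d$-identity implies the lower-bound ODI
\[
d'\;\ge\;-\tfrac12 d^2-\tfrac12\eta_S^2+\CA.
\]
Define the two thresholds $R_\pm:=\sqrt{\CA\pm\sqrt{\CA^2-\Cmi^2}}$, which are real and positive by \eqref{Cmi}, and note the algebraic identities $R_-^2+R_+^2=2\CA$ and $R_-R_+=\Cmi$. Consider the rectangle $\mathcal{R}:=\{(d,\eta_S):d\ge R_-,\;|\eta_S|\le R_+\}$. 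On the face $\{d=R_-,\,|\eta_S|\le R_+\}$ the ODI gives $d'\ge-\tfrac12(R_-^2+R_+^2)+\CA=0$, and on the face $\{|\eta_S|=R_+,\,d\ge R_-\}$ the $\eta_S$-inequality gives $\tfrac{d}{dt}|\eta_S|\le -R_-R_++\Cmi=0$. Hence $\mathcal{R}$ is forward-invariant. Reading \eqref{etaS_cond2}--\eqref{c1_cond2} as the natural pointwise conditions over $\textnormal{supp}\,\rin$ (a lower bound on $d_0$ being needed uniformly in $\bx$ for all trajectories to lie in $\mathcal{R}$), every Lagrangian trajectory begins inside $\mathcal{R}$, and therefore $d(t)\ge R_->0$ and $|\eta_S(t)|\le R_+$ persist for all $t\ge 0$.

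Once the strict lower bound $d\ge R_->0$ is secured, the remaining quantities bootstrap easily. The vorticity satisfies $\omega'=-d\,\omega+b$ with $|b|\le 2|\phi'|_\infty u_{max}m_0$, so the dissipative coefficient $d\ge R_->0$ confines $|\omega|\le\max(\|\omega(\cdot,0)\|_\infty,\,|b|_\infty/R_-)$. Substituting the resulting uniform bounds on $e,\eta_S,\omega$ and $|\Delta\FP|\le 2A$ back into the $d$-identity produces $d'\le-\tfrac12 d^2+C$, whence an upper bound on $d$ as well. Together these give a uniform bound on $\nabla\bu$ on $\textnormal{supp}\,\rho(\cdot,t)$, from which the standard local-to-global continuation argument (e.g.\ along the lines of \cite{HeT2017}) delivers the desired global smooth solution. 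The principal obstacle is precisely the $\nabla^2\FP$ contribution to $|\eta_S'|$: in the quadratic case of Theorem \ref{thm_2dsmooth1}, $\nabla^2\FP=aI$ commutes with every $S$ and drops out of the spectral-gap dynamics, whereas a general convex Hessian can rotate the eigenframe of $S$ and produces a genuine source of size up to $2A$. This is what forces both the extra $+2A$ in the definition of $\Cmi$ and the strict inequality $\Cmi<\CA$ in \eqref{Cmi}, and it is also why the apriori velocity bound \eqref{umax} must be assumed from the outset rather than derived, no exponential flocking estimate being available in this setting.
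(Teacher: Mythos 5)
Your proposal is correct and follows essentially the same route as the paper: both derive the coupled dynamics $\texttt{e}'=\tfrac12(4\omega^2+(\phi*\rho)^2-\eta_S^2-\texttt{e}^2-2\Delta\FP)$ and $\eta_S'=-\texttt{e}\,\eta_S+q$ with $|q|\le\Cmi$, then propagate the simultaneous bounds $\texttt{e}\ge R_-$ and $|\eta_S|\le R_+$ before bootstrapping the vorticity and the upper bound on $\texttt{e}$. Your phrasing as a forward-invariant rectangle in the $(\texttt{e},\eta_S)$-plane is a clean repackaging of the paper's bootstrap; one small caution is that the displayed inequality $|\eta_S'|\le|d\,\eta_S|+\Cmi$ discards the crucial sign of the damping term and should be written in the form $\eta_S'=-d\,\eta_S+q$, $|q|\le\Cmi$, as you in fact use on the face $|\eta_S|=R_+$.
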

Notice that Proposition \ref{prop_infty2} already gives an a priori estimate
\begin{equation}
u_{max} =  \max\left\{C_+\cdot\max_{\bx\in\textnormal{supp }\rin} (|\buin(\bx)|+|\bx|),\frac{\Ccat}{\phi_-}\right\}
\end{equation}
for a general class of external potentials, including those satisfying (\ref{eq:uconvex}) (with the further assumption that the unique global minimum of $\FP$ is $\FP(0) = 0$, without loss of generality). Also, Theorem \ref{thm_2dsmooth2} also applies to the cases when other a priori estimates of $|\bu|$ are available.

%%%%%%%%%%%%%%%%%%%%%%%%%
\section{Proof of main results --- hypocoercivity bounds}
%%%%%%%%%%%%%%%%%%%%%%%%%

\subsection{Quadartic potentials}
We prove theorems \ref{thm_flocking1} and \ref{prop_infty1}, making use of the uniform lower-bound of $\phi(r) \geq \phi_-$ in \eqref{eq:phimin}.
\begin{proof}[Proof of theorem \ref{thm_flocking1}]
Since the fluctuations functional $\cEt(\rho,\bu)$ in (\ref{flocking}) satisfies $\cEt(\rho,\bu)=\cEt(\hat{\rho},\hat{\bu})$,  it suffices to study (\ref{eq:quadratic}) with 
$(\bx_c(0)=0,\bu_c(0))=(0,0) \leadsto (\bx_c(t),\bu_c(t))\equiv (0,0)$, for which the fluctuations coincide with  (multiple of)  the energy
\begin{equation}\label{eq:delvsE}
\cEt(t)  = 4m_0 \int \Big(\frac{1}{2}|\bu(\bx,t)|^2+\frac{a}{2}|\bx|^2\Big)\rho(\bx,t)\rd{\bx}.
\end{equation}
As before, the energy decay is dictated by the minimal value $\displaystyle  \mathop{\min}_{\bx,\by\in\textnormal{supp\,}\rho(\cdot,t)}\phi(|\bx-\by|)\geq \phi_-:= \phi( \sqrt{8R_0/a})$,
\begin{equation}\label{eq:est1}
\begin{split}
\partial_t \int \Big(\frac{1}{2}|\bu(\bx,t)|^2&+\frac{a}{2}|\bx|^2\Big) \rho(\bx,t)\rd{\bx} 
\!=\!  -\frac{1}{2}\int\int\phi(\bx-\by) |\bu(\by)\!-\!\bu(\bx)|^2\rho(\bx)\rho(\by)\rd{\bx}\rd{\by} \\
\le & -\frac{\phi_-}{2}\int\int |\bu(\by)-\bu(\bx)|^2\rho(\bx)\rho(\by)\rd{\bx}\rd{\by} 
=  -m_0\phi_-\int |\bu|^2\rho\rd{\bx}.
\end{split}
\end{equation}
Then we compute the cross term
\[
\begin{split}
& \partial_t \int \bu(\bx,t)\cdot\bx \rho(\bx,t)\rd{\bx} \\
= & - \int (\bu(\bx,t)\cdot\bx)\nabla\cdot(\rho\bu)\rd{\bx} + \int \bx\cdot\left(-\bu\cdot\nabla\bu + \int \phi(\bx-\by)(\bu(\by)-\bu(\bx))\rho(\by)\rd{\by} - a\bx\right)\rho\rd{\bx} \\
= &   - a \int |\bx|^2\rho\rd{\bx} + \int |\bu|^2 \rho \rd{\bx} + \int \int \phi(\bx-\by) \bx\cdot(\bu(\by)-\bu(\bx))\rho(\bx)\rho(\by) \rd{\bx}\rd{\by} \\
\le &   - a\int |\bx|^2\rho\rd{\bx} + \int |\bu|^2 \rho \rd{\bx} + \frac{\phi_+}{2}\int \int \Big( \frac{a}{m_0\phi_+}|\bx|^2+\frac{m_0\phi_+}{a}|\bu(\by)-\bu(\bx)|^2\Big)\rho(\bx)\rho(\by) \rd{\bx}\rd{\by} \\
= &   - \frac{a}{2}\int |\bx|^2\rho\rd{\bx} + \Big(1+\frac{m_0^2\phi_+^2}{a}\Big)\int |\bu|^2\rho\rd{\bx} \\
\end{split}
\]
Adding a $\mixed$-multiple of this cross term --- $\mixed$ is yet to be determined,  we conclude that 
\begin{equation}\label{hypo1}
\begin{split}
 \partial_t  \int \Big(\frac{1}{2}|\bu(\bx,t)|^2&+\frac{a}{2}|\bx|^2 + 2\mixed\bu(\bx,t)\cdot\bx\Big) \rho(\bx,t)\rd{\bx} \\
\le & -\Big(m_0\phi_--2\mixed\Big(1+\frac{m_0^2\phi_+^2}{a}\Big)\Big) \int |\bu|^2\rho\rd{\bx}  - 2\mixed \int \frac{a}{2} |\bx|^2\rho\rd{\bx}.
\end{split}
\end{equation}
which means the LHS is a Lyapunov functional if $\mixed>0$ is small enough: in fact, we set
\be\label{eq:mixed1}
\mixed = \frac{1}{2}\min\left\{ \frac{m_0\phi_-}{(1 + \frac{m_0^2\phi_+^2}{a}) + \frac{1}{2}}, \frac{\sqrt{a}}{2} \right\},
\ee
to conclude that the Lyapunov functional
\begin{equation}\label{V0}
V(t) := \int \Big(\frac{1}{2}|\bu(\bx,t)|^2+\frac{a}{2}|\bx|^2 + 2\mixed\bu(\bx,t)\cdot\bx\Big) \rho(\bx,t)\rd{\bx},
\end{equation}
admits the decay bound $\displaystyle 
\frac{\rd}{\rd t}V(t) \le -\mixed \int( |\bu|^2+ a|\bx|^2)\rho\rd{\bx}$.
Noting that  this modified Lyapunov functional
 is comparable to the energy functional (recall  $2\mixed\le \nicefrac{\sqrt{a}}{2}$)
\[
\frac{\cEt}{4m_0}=\frac{1}{2}\int ( |\bu|^2+ a|\bx|^2)\rho\rd{\bx} \le V(t) \le \int( |\bu|^2+ a|\bx|^2)\rho\rd{\bx} =\frac{\cEt}{2m_0},
\]
we conclude its \emph{dissipativity} $\displaystyle V'(t) \le -\mixed V(t)$
which in turn proves the $L^2$-flocking bound \eqref{flocking},
$\displaystyle \frac{\cEt(t)}{4m_0} \le V(t) \le \frac{\cEtin}{2m_0}e^{-\mixed t}$. 
\end{proof}

\begin{proof}[Proof of theorem \ref{prop_infty1}]
We define the perturbed energy functional  
\begin{equation}
F_1(\bx,t) := \frac{1}{2}|\bu(\bx,t)|^2+\frac{a}{2}|\bx|^2 + 2\mixed_1\bu(\bx,t)\cdot\bx
\end{equation}
where $\mixed_1>0$ is yet to be determined. Then we compute the derivative of $F_1$ along characteristics:
\begin{equation}\label{hypo2}
\begin{split}
F'_1 = & \partial_t F_1 + \bu\cdot\nabla F_1 \\
= & (\bu+2\mixed_1\bx)\cdot\left(-\bu\cdot\nabla\bu + \int \phi(\bx-\by)(\bu(\by)-\bu(\bx))\rho(\by)\rd{\by} - a\bx\right) \\ & + \bu\cdot(\bu\cdot\nabla\bu)  + a\bu\cdot\bx + 2\mixed_1|\bu|^2 + 2\mixed_1\bx\cdot(\bu\cdot\nabla\bu) \\
= & -2\mixed_1 a|\bx|^2 + (\bu+2\mixed_1\bx)\cdot\left(  \int \phi(\bx-\by)(\bu(\by)-\bu(\bx))\rho(\by)\rd{\by}\right)  + 2\mixed_1|\bu|^2   \\
= & -\!2\mixed_1 a|\bx|^2 \!-\!(\phi*\rho)|\bu|^2 \!+\!\bu\cdot(\phi*(\rho\bu))\!+\! 2\mixed_1\bx\cdot((\phi*(\rho\bu))-(\phi*\rho)\bu)\!+\!2\mixed_1|\bu|^2.
\end{split}
\end{equation}
We bound the convolution terms of the right of \eqref{hypo2}: by  \eqref{grow} we have\newline $m_0\phi_- \le (\phi*\rho)(\bx) \le m_0\phi_+$; further, by \eqref{eq:delvsE} $\cEt(t) > 4m_0 E_k(t)$ and the exponential decay of $L^2$-Lyapunov functional, \eqref{flocking},  imply 
\begin{equation*}
\begin{split}
|(\phi*(\rho\bu))(\bx)| = &  \left| \int \phi(\bx-\by)\bu(\by)\rho(\by)\rd{\by} \right|   \\ \le &  \phi_+\int |\bu(\by)|\rho(\by)\rd{\by} \le \phi_+ \sqrt{m_0} \left(\int |\bu|^2\rho\rd{\by}\right)^{1/2} \le  \phi_+ \sqrt{m_0} \frac{\sqrt{2\cEtin}}{\sqrt{2m_0}}e^{-\mixed t/2}.
\end{split}
\end{equation*}
We conclude that the perturbed energy functional  $F_1$ does not exceed
\[
\begin{split}
F_1' \le & -2\mixed_1 a|\bx|^2 -m_0\phi_-|\bu|^2 + \Big(\frac{m_0\phi_-}{2}|\bu|^2+ \frac{\phi_+^2}{2m_0\phi_-}  \cEtin \cdot e^{-\mixed t}\Big) \\ & + \Big(\frac{\mixed_1 a}{2}|\bx|^2+ \frac{2\mixed_1\phi_+^2}{a} \cEtin \cdot e^{-\mixed t}\Big)  + 2\mixed_1m_0\phi_+\Big(\frac{a}{4m_0\phi_+}|\bx|^2 + \frac{m_0\phi_+}{a}|\bu|^2\Big) + 2\mixed_1|\bu|^2   \\
\le & -\mixed_1 a|\bx|^2- \Big(\frac{m_0\phi_-}{2} - 2\mixed_1\big(1+\frac{m_0^2\phi_+^2}{a}\big)\Big)|\bu|^2
 + C_0 \cdot \cEtin\cdot e^{-\mixed t}
\end{split}
\]
with 
\begin{equation}\label{eq:Ckitty}
C_0 = \Big(\frac{1}{2m_0\phi_-}+\frac{2\mixed_1}{a}\Big)\phi_+^2.  
\end{equation}
 Therefore, by choosing $\mixed_1$ as 
\begin{equation}\label{eq:mixed1}
\mixed_1 := \frac{1}{4}\min\Big\{ \frac{m_0\phi_-}{(1 + \frac{m_0^2\phi_+^2+1}{a}) + \frac{1}{4}}, \frac{\sqrt{a}}{2} \Big\} \ge \frac{\mixed}{2},
\end{equation}
one has
\[
F'_1(t) \le -\frac{\mixed}{2} (a|\bx|^2 + |\bu|^2) + C_0 \cdot \cEtin\cdot e^{-\mixed t} \le -\frac{\mixed}{2} F_1(t) + C_0 \cdot \cEtin\cdot e^{-\mixed t},
\]
with  the explicit bound $F_1(t) \leq e^{-\mixed t/2}\left(F_1(0) + {2C_0 \cdot \cEtin/\mixed } \right)$.
 Finally,   since $\displaystyle \mathop{\max}_{\bx\in\textnormal{supp\,}\rho(\cdot,t)}F_1(\bx,t)$ is comparable with  $\cEi$, namely $\displaystyle \frac{1}{8}\cEi \le F_1 \le \frac{1}{2}\cEi$ and $\displaystyle \cEt \le m_0^2\cdot \cEi$, the result \eqref{cEinfty} follows with
 $\Ckitty = 4(1+4C_0m_0^2/\mixed)$.
 \end{proof}

%%%%%%%%%%%%%%%%%%%%%
\subsection{General convex potentials}
%%%%%%%%%%%%%%%%%%%%%%
We begin with the proof of Proposition \ref{prop_infty2}, which confirms the  the uniform bound  $|\bu|+|\bx|$ in terms of ${\mathcal O}(1/\phi_-)$. The main idea is to study the evolution of the particle energy $\frac{1}{2}|\bu(\bx,t)|^2+ \FP(\bx)$ along characteristics, and conduct hypocoercivity arguments to handle the possible increment of the particle energy due to the Cucker-Smale interaction.

\begin{proof}[Proof of Proposition \ref{prop_infty2}]
We define 
\begin{equation}
F(\bx,t) = \frac{1}{2}|\bu(\bx,t)|^2+ \FP(\bx) + c\bu(\bx,t)\cdot\nabla \FP(\bx)
\end{equation}
with $c>0$ being small, to be chosen. Then it follows from the assumptions on $\FP$ that 
\begin{equation}\label{Fbound}
\begin{split}
F - \frac{1}{4} |\bu|^2 - \frac{a}{4}|\bx|^2 = &\frac{1}{4}|\bu|^2+ (\FP(\bx)-\frac{a}{4}|\bx|^2) + c\bu(\bx,t)\cdot\nabla \FP(\bx)\\
  \ge & \frac{1}{4} |\bu|^2 +\frac{a}{4}|\bx|^2 - \frac{c}{2}(\frac{1}{4c}|\bu|^2 + 4c|\nabla\FP(\bx)|^2) \\
\ge & \frac{1}{8} |\bu|^2 + \frac{a}{4}|\bx|^2 - 2c^2 A^2|\bx|^2 \ge 0.
\end{split}
\end{equation}
Now fix $\displaystyle c \le \sqrt{\frac{a}{8A^2}}$. 
Then we compute the derivative of $F$ along characteristics:
\begin{align}
F' = & \partial_t F + \bu\cdot\nabla F \nonumber \\
= & (\bu+c\nabla \FP(\bx))\cdot\left(-\bu\cdot\nabla\bu + \int \phi(\bx-\by)(\bu(\by)-\bu(\bx))\rho(\by)\rd{\by} - \nabla \FP(\bx)\right) \nonumber \\ 
& + \bu\cdot(\bu\cdot\nabla\bu)  + \bu\cdot\nabla\FP(\bx) + c\bu^\top\nabla^2\FP(\bx)\bu + c\nabla\FP(\bx)\cdot(\bu\cdot\nabla\bu) \label{eq:here}\\
= & -c|\nabla\FP(\bx)|^2 + (\bu+c\nabla\FP(\bx))\cdot\left(  \int \phi(\bx-\by)(\bu(\by)-\bu(\bx))\rho(\by)\rd{\by}\right)+ c\bu^\top\nabla^2\FP(\bx)\bu   \nonumber\\
= & -c|\nabla\FP(\bx)|^2 -(\phi*\rho)|\bu|^2 + \bu\cdot(\phi*(\rho\bu)) + c\nabla\FP(\bx)\cdot((\phi*(\rho\bu))-(\phi*\rho)\bu) \nonumber\\
& + c\bu^\top\nabla^2\FP(\bx)\bu  \nonumber 
\end{align}
Noticing that $m_0\phi_- \le (\phi*\rho)(\bx) \le m_0\phi_+$, the convolution term on the right of \eqref{eq:here} can be upper-bounded in terms of the dissipating energy $\Et(t)$ in \eqref{energy} 
\begin{equation*}
\begin{split}
|(\phi*(\rho\bu))(\bx)| = &  \left| \int \phi(\bx-\by)\bu(\by)\rho(\by)\rd{\by} \right| \le \phi_+\int |\bu(\by)|\rho(\by)\rd{\by} \\ \le & \phi_+\int |\bu(\by)|\rho(\by)\rd{\by} \le \phi_+ m_0^{1/2} \left(\int |\bu|^2\rho\rd{\by}\right)^{1/2} \le 2\phi_+ m_0^{1/2} \Et^{1/2}(0) ,\quad \forall \bx.
\end{split}
\end{equation*}

Therefore
\begin{equation*}
\begin{split}
F' \le & -c|\nabla\FP(\bx)|^2 -m_0\phi_-|\bu|^2 + (\frac{m_0\phi_-}{2}|\bu|^2+ \frac{2}{m_0\phi_-}\phi_+^2 m_0 \Etin) \\
& + (\frac{c}{4}|\nabla\FP(\bx)|^2+ 4c\phi_+^2 m_0 \Etin)  + (cm_0\phi_+)(\frac{1}{4m_0\phi_+}|\nabla\FP(\bx)|^2 + m_0\phi_+|\bu|^2) + cA|\bu|^2  \\
\le & -\frac{c}{2}|\nabla\FP(\bx)|^2 - \big(\frac{m_0\phi_-}{2} - c(A+m_0^2\phi_+^2)\big)|\bu|^2 + C_0
\end{split}
\end{equation*}
with 
\begin{equation}\label{C_0}
C_0 = \Big(\frac{2}{m_0\phi_-}+4c\Big)\phi_+^2 m_0 \Etin
\end{equation}
 Therefore, by choosing 
\begin{equation}\label{c}
c = \min\left\{\frac{m_0\phi_-}{A+2(A+m_0^2\phi_+^2)},\sqrt{\frac{a}{8A^2}}\right\}
\end{equation}
 one has
\begin{equation}
F' \le -\frac{c}{2}(|\nabla\FP(\bx)|^2 + A|\bu|^2) + C_0
\end{equation}

Next we notice that
\begin{align*}
F & \le \frac{1}{2}|\bu|^2 + \frac{A}{2}|\bx|^2 + \frac{c}{2}(\frac{1}{c}|\bu|^2 + cA^2|\bx|^2) \\
& \le \max\Big\{1,\frac{1+c^2A}{2}\Big\}(|\bu|^2 + A|\bx|^2) = |\bu|^2 + A|\bx|^2
\end{align*}
and 
\begin{equation*}
|\nabla\FP(\bx)|^2 + A|\bu|^2 \ge \min\Big\{A, \frac{a^2}{A}\Big\}(|\bu|^2 + A|\bx|^2) = \frac{a^2}{A}(|\bu|^2 + A|\bx|^2)
\end{equation*}
This means that if 
\begin{equation}\label{C_F}
F(\bx,t) \ge \frac{2AC_0}{a^2c}:=C_F
\end{equation}
then $F'\le 0$. Thus $F$ cannot further increase (along characteristics) if it is larger than $C_F$. It is clear that $c=\mathcal{O}(\phi_-)$ and $C_0 = {\mathcal O}(1/\phi_-)$ for small $\phi_-$. Therefore $C_F = {\mathcal O}(1/\phi_-^2)$.

Therefore, by (\ref{Fbound}) we get
\begin{equation*}
\begin{split}
|\bu| + |\bx| \le & 2\Big(1+\frac{1}{\sqrt{a}}\Big)\sqrt{F} \le 2(1+\frac{1}{\sqrt{a}})\sqrt{\max\{C_F,\max_{\bx\in\textnormal{supp }\rin} F(\bx,0)\}}  \\ 
\le & 2\Big(1+\frac{1}{\sqrt{a}}\Big)\sqrt{\max\{C_F,\max_{\bx\in\textnormal{supp }\rin} |\buin(\bx)|^2 + A|\bx|^2\}} \\
\le & \max\Big\{\Cplus\cdot \hspace*{-0.3cm}\max_{\quad \bx\in\textnormal{supp }\rin}\hspace*{-0.4cm} (|\buin(\bx)| + |\bx|), 2(1+\frac{1}{\sqrt{a}})\sqrt{C_F},\Big\}, \quad \Cplus:=2\sqrt{A}(1+\frac{1}{\sqrt{a}}) \\
\end{split}
\end{equation*}
and the term $2(1+\frac{1}{\sqrt{a}})\sqrt{C_F}$ scales like ${\mathcal O}(1/\phi_-)$ for small $\phi_-$.
\end{proof}

When dealing with convex potential $\FP(\bx)=\frac{a}{2}|\bx|^2$ we used the fact that the mean location $\bx_c$ and mean velocity $\bu_c$ satisfies the closed system, \eqref{eqs:xcuc}, which enabled us to convert the measure of $L^2$-fluctuations into an energy-based functional.
In case of general convex potentials, however, the mean location $\bx_c$ and mean velocity $\bu_c$ do not satisfy a closed system and  therefore one cannot reduce the problem with $\bx_c=\bu_c=0$, for which $\cEt$ is equivalent to the total energy. Therefore one cannot using hypocoercivity on the energy estimate to obtain the decay of $\cEt$. Instead, we will construct a Lyapunov functional which is equivalent to $\cEt$ directly.
We begin with the case of a constant interaction kernel.
\begin{proof}[Proof of Theorem \ref{thm_flocking2}]
Recall that we assumed $\phi$ is constant. Denote
$K := m_0\phi$ so that the  convolution terms with $\phi$ amount to simple averaging, $(\phi*f)(\bx) =K \int f \rd{\bx}$. 
We will use the $\rho$-weighted  quantities  
\[
\langle f(\bx,\by),g(\bx,\by)\rangle := \int\int f(\bx,\by)\cdot g(\bx,\by)\rho(\bx)\rho(\by) \rd{\bx}\rd{\by},\quad |f(\bx,\by)|^2 := \langle f(\bx,\by),f(\bx,\by)\rangle
\]
for any scalar or vector functions $f,g$, where we suppress its dependence on $t$.

We compute the time derivative of the following quantity (where $\beta>0$ to be determined):
\begin{equation}
F(t) = \frac{K}{2}|\bx-\by|^2 + \langle \bx-\by,\bu(\bx)-\bu(\by) \rangle + \frac{\beta}{2}|\bu(\bx)-\bu(\by)|^2
\end{equation}
\begin{align}
\frac{\rd{F}}{\rd{t}} 
=  &\int\int \Big[ (\frac{K}{2}|\bx-\by|^2 + (\bx-\by)\cdot(\bu(\bx)-\bu(\by)) \\
& \qquad + \frac{\beta}{2}|\bu(\bx)-\bu(\by)|^2)(-\nabla_{\bx}\cdot(\rho(\bx)\bu(\bx))\rho(\by) - \nabla_{\by}\cdot(\rho(\by)\bu(\by))\rho(\bx)) \nonumber \\ 
&   \qquad +  (\bx-\by +\beta(\bu(\bx)-\bu(\by)))\cdot(-\bu(\bx)\cdot\nabla_{\bx}\bu(\bx)+\bu(\by)\cdot\nabla_{\by}\bu(\by) \nonumber \\
& \qquad - K\bu(\bx) + K\bu(\by) - \nabla\FP(\bx) + \nabla\FP(\by))\rho(\bx)\rho(\by) \Big]\rd{\bx}\rd{\by} \nonumber \\
= & \int\int \Big[ K(\bx-\by) + (\bu(\bx)-\bu(\by)) + \nabla_{\bx}\bu(\bx)(\bx-\by+\beta(\bu(\bx)-\bu(\by))))\cdot\bu(\bx) \label{K1eq0} \\ 
& \qquad + ( - K(\bx-\by) - (\bu(\bx)-\bu(\by)) - \nabla_{\by}\bu(\by)(\bx-\by+\beta(\bu(\bx)-\bu(\by))))\cdot\bu(\by)\nonumber \\
&  \quad + (\bx-\by +\beta(\bu(\bx)-\bu(\by)))\cdot(-\bu(\bx)\cdot\nabla_{\bx}\bu(\bx)+\bu(\by)\cdot\nabla_{\by}\bu(\by)  \nonumber \\ 
& \qquad - K\bu(\bx) + K\bu(\by) - \nabla\FP(\bx) + \nabla\FP(\by))\Big]\rho(\bx)\rho(\by) \rd{\bx}\rd{\by} \nonumber \\
= & \int\int \Big[   -(K\beta-1) |\bu(\bx)-\bu(\by)|^2 - (\bx-\by)\cdot(\nabla\FP(\bx)-\nabla\FP(\by)) \nonumber \\
&  \qquad - \beta(\bu(\bx)-\bu(\by))\cdot(\nabla\FP(\bx)-\nabla\FP(\by)) \Big]\rho(\bx)\rho(\by) \rd{\bx}\rd{\by}. \nonumber 
\end{align}

Notice that
\begin{equation}
(\bx-\by)\cdot(\nabla\FP(\bx)-\nabla\FP(\by)) = \int_0^1 (\bx-\by)^\top \nabla^2\FP((1-\theta)\by + \theta\bx) (\bx-\by) \rd{\theta} \ge a|\bx-\by|^2
\end{equation}
and similarly
\begin{equation}
|(\bu(\bx)-\bu(\by))\cdot(\nabla\FP(\bx)-\nabla\FP(\by))| \le A|\bu(\bx)-\bu(\by)|\cdot|\bx-\by|
\end{equation}
Then we obtain
\begin{equation}\label{Kbeta}
\begin{split}
(\ref{K1eq0}) \le & - (K\beta-1) |\bu(\bx)-\bu(\by)|^2  - a|\bx-\by|^2 + A\beta|\bu(\bx)-\bu(\by)|\cdot|\bx-\by|  \\
\end{split}
\end{equation}
We want to choose a $\beta$ such that the RHS of (\ref{Kbeta}), as a quadratic form, is negative-definite, i.e., 
its discriminant is
\begin{equation}
A^2\beta^2 - 4a(K\beta-1) = A^2\beta^2 - 4aK\beta + 4a < 0
\end{equation}
This is possible, since by (\ref{Kcond0})
$(4aK)^2 - 16A^2a = 16a(aK^2 - A^2) > 0$, 
and we can take
\begin{equation}
\beta := \frac{2aK}{A^2}
\end{equation}
and then 
\begin{equation}
\frac{\rd{F}}{\rd{t}} \le -\mu_1 ( |\bu(\bx)-\bu(\by)|^2  + a|\bx-\by|^2) = -\mu_1 \cEt
\end{equation}
for some $\mu_1>0$ (whose explicit form will be given in Remark \ref{rmk_mu1}).
With this choice of $\beta$, the discriminant of the LHS of (\ref{K1eq0}) is
\[
1^2 - 4\frac{K}{2}\frac{\beta}{2} = 1-\frac{2aK^2}{A^2} < 1-\frac{2aA^2}{aA^2} = -1
\]
and thus it is positive definite. One can estimate $F$ above and below by
$\mu_3 \cEt \le F \le \mu_2 \cEt$
for some $\mu_2>\mu_3>0$.
Therefore
$F(t) \le F(0)e^{-\frac{\mu_1}{\mu_2}}$
and then
\[
\cEt(t) \le \frac{1}{\mu_3}F(t) \le \frac{1}{\mu_3}F(0)e^{-\frac{\mu_1}{\mu_2}}  \le \frac{\mu_2}{\mu_3}\cEt(0)e^{-\frac{\mu_1}{\mu_2}} 
\]
\end{proof}

\begin{remark}
The key idea of the proof is the cancellation of the term $K(\bx-\by)\cdot(\bu(\bx)-\bu(\by))$ in (\ref{K1eq0}). For large $K$, this term is $O(K)$, while the two good terms are ${\mathcal O}(K)$ and ${\mathcal O}(1)$ respectively. If this term was not cancelled, then it could not be absorbed by the good terms. 

In fact, the positive/negative $K(\bx-\by)\cdot(\bu(\bx)-\bu(\by))$ terms are given by the time derivative of $\frac{K}{2}|\bx-\by|^2$ and  $\langle \bx-\by,\bu(\bx)-\bu(\by) \rangle$ respectively. Therefore, in the Lyapunov functional, one cannot change the coefficient ratio between a square term $|\bx-\by|^2$ and the cross term $\langle \bx-\by,\bu(\bx)-\bu(\by) \rangle$. This is an essential difference from the standard hypocoercivity theory (for which the cross term can be arbitrarily small).
\end{remark}

\begin{remark}\label{rmk_mu1}
One can obtain the explicit expression of  $\mu_1$  from \eqref{Kbeta} by letting the good terms absorb the bad term exactly, i.e., solving the quadratic equation
\[
(K\beta-1 - \mu_1)(a-a\mu_1) = \frac{A^2\beta^2}{4}
\]
yields $\displaystyle 
\mu_1 = \frac{aK^2}{A^2} - \sqrt{ \frac{a^2K^4}{A^4} -  \frac{aK^2}{A^2} + 1} > 0$; similarly, one obtains $\mu_{2,3}$ as
\[
\mu_{2,3} =\frac{1}{2a}\left( \frac{a^2K}{A^2}+\frac{K}{2} \pm \sqrt{ (\frac{a^2K}{A^2}+\frac{K}{2})^2 -  4a(\frac{aK^2}{2A^2} -\frac{1}{4})} \right) > 0.
\]
\end{remark}

To handle the case with non-constant $\phi$, we start with the following lemma:
\begin{lemma}\label{lem_sep}
With the same assumptions as Theorem \ref{thm_flocking3}, further assume  the apriori uniform bound on the velocity field:
\begin{equation}\label{umax1}
\max_{t\ge 0,\,\bx\in\textnormal{supp }\rho(\cdot,t)}(|\bu(\bx,t)|+|\bx|) \le u_{max} <\infty.
\end{equation}
Fix any $\epsilon_1$ small enough. Assume that at time $t_0$, one can write $\textnormal{supp }\rho(\cdot,t_0)$ into the disjoint union of two subsets:
\begin{equation}
\textnormal{supp }\rho(\cdot,t_0) = S_1\cup S_2,\quad S_1\cap S_2 = \emptyset
\end{equation}
which satisfies
\begin{equation}\label{eps1}
\int_{S_2} \rho(\bx,t_0)\rd{\bx} \le \eta \epsilon_1
\end{equation}
with $\eta>0$ depending on $\phi$, $\FP$, $u_{max}$ but independent of $\epsilon_1$,  and
\begin{equation}\label{eps2}
\cEi(t_0;S_1) := \sup_{\bx,\by\in S_1}( |\bu(\bx)-\bu(\by)|^2+a|\bx-\by|^2) \le \epsilon_1
\end{equation}
Let $S_1(t),S_2(t)$ be the image of $S_1,S_2$ under the characteristic flow map from $t_0$ to $t$. Then 
\begin{equation}\label{eps3}
\cEi(t;S_1(t)) \le \epsilon_1,\quad \forall t\ge t_0
\end{equation}
\end{lemma}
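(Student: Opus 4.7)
The plan is to follow pairs of characteristics $(\mathbf{X}_1(t),\mathbf{U}_1(t))$ and $(\mathbf{X}_2(t),\mathbf{U}_2(t))$ emanating from $S_1$ at time $t_0$, and to propagate a pointwise bound on the pairwise gap $G_{12}(t):=|\delta\mathbf{U}|^2+a|\delta\mathbf{X}|^2$, where $\delta\mathbf{X}:=\mathbf{X}_1-\mathbf{X}_2$, $\delta\mathbf{U}:=\mathbf{U}_1-\mathbf{U}_2$. Subtracting the characteristic equations yields the damped-oscillator form
\[
\tfrac{d}{dt}\delta\mathbf{U}=E(t)-K(t)\delta\mathbf{U}-\bigl(\nabla\FP(\mathbf{X}_1)-\nabla\FP(\mathbf{X}_2)\bigr),
\]
with alignment coefficient $K(t):=\int\phi(|\mathbf{X}_2-\by|)\rho(\by,t)\rd{\by}\ge m_0\phi_-$ and kernel-perturbation error
\[
E(t):=\int\bigl[\phi(|\mathbf{X}_1-\by|)-\phi(|\mathbf{X}_2-\by|)\bigr](\bu(\by,t)-\mathbf{U}_1)\rho(\by,t)\rd{\by}.
\]

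I would set up the bootstrap via $\tau:=\sup\{t\ge t_0:\cEi(s;S_1(s))\le\epsilon_1\ \forall s\in[t_0,t]\}$ and argue for a contradiction that $\tau<\infty$, so by continuity $\cEi(\tau;S_1(\tau))=\epsilon_1$. On $[t_0,\tau]$ the working bound $G_{12}\le\epsilon_1$ combined with $|\phi(|\mathbf{X}_1-\by|)-\phi(|\mathbf{X}_2-\by|)|\le|\phi'|_\infty|\delta\mathbf{X}|\le|\phi'|_\infty\sqrt{\epsilon_1/a}$ gives the $S_1(s)$-contribution $|E^{S_1}(s)|\lesssim (m_0|\phi'|_\infty/\sqrt{a})\epsilon_1$. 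Since mass is conserved along characteristics, $\int_{S_2(s)}\rho=\int_{S_2}\rho(\cdot,t_0)\le\eta\epsilon_1$, and together with the a priori bound \eqref{umax1} the $S_2(s)$-contribution is $|E^{S_2}(s)|\lesssim (u_{max}|\phi'|_\infty/\sqrt{a})\,\eta\,\epsilon_1^{3/2}$. Thus $|E(t)|=O(\epsilon_1+\eta\epsilon_1^{3/2})$, and since $|\delta\mathbf{U}|=O(\sqrt{\epsilon_1})$ under the bootstrap assumption, we have $\delta\mathbf{U}\cdot E=O(\epsilon_1^{3/2}+\eta\epsilon_1^2)$, strictly subleading to $V_{12}\sim\epsilon_1$.

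I would close the loop with a hypocoercive pair Lyapunov analogous to the one used in the proof of Theorem \ref{thm_flocking2}: for a small $\gamma>0$, put
\[
V_{12}(t):=|\delta\mathbf{U}|^2+A|\delta\mathbf{X}|^2+2\gamma\,\delta\mathbf{X}\cdot\delta\mathbf{U},
\]
positive-definite and comparable to $G_{12}$ for $\gamma$ small. Using $K(t)\ge m_0\phi_-$ together with $\nabla\FP(\mathbf{X}_1)-\nabla\FP(\mathbf{X}_2)=M(t)\delta\mathbf{X}$ for the matrix $M(t)=\int_0^1\nabla^2\FP(\mathbf{X}_2+s\delta\mathbf{X})\rd{s}$ satisfying $aI\preceq M(t)\preceq AI$, a direct quadratic-form computation gives $\dot V_{12}\le -\mu V_{12}+2(\delta\mathbf{U}+\gamma\,\delta\mathbf{X})\cdot E(t)$ for some $\mu=\mu(a,A,\phi_\pm,m_0)>0$. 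By the error estimate of the previous paragraph, the perturbation is dominated by $\tfrac{\mu}{2}V_{12}$ once $\epsilon_1$ is small and $\eta$ is fixed small enough in terms of $\mu,|\phi'|_\infty,u_{max},a$ (crucially, independently of $\epsilon_1$). Hence $\dot V_{12}\le -\tfrac{\mu}{2}V_{12}$ on $[t_0,\tau]$, giving $V_{12}(\tau)\le V_{12}(t_0)$ pointwise in pairs, whose supremum over $(\mathbf{X}_1,\mathbf{X}_2)\in S_1\times S_1$ contradicts $\cEi(\tau;S_1(\tau))=\epsilon_1$.

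The main obstacle is closing this last step cleanly: the equivalence $c_1G_{12}\le V_{12}\le c_2G_{12}$ in general has $c_2/c_1>1$, so the propagation $V_{12}(\tau)\le V_{12}(t_0)$ only yields $\cEi(\tau;S_1(\tau))\le (c_2/c_1)\epsilon_1$ a priori. This is resolved by shrinking $\gamma$ and optimizing the coefficient of $|\delta\mathbf{X}|^2$ so that $c_2/c_1\to 1$, at the cost of a smaller $\mu$ (which forces $\eta$ to be taken correspondingly smaller but still $\epsilon_1$-independent). A second technical subtlety is the $\delta\mathbf{X}$-dependence of $M(t)$: the hypocoercive rate $\mu$ must be chosen uniformly over all matrices with spectrum in $[a,A]$, which is standard in such scalar-quadratic-form bounds but requires care in the explicit computation of $\dot V_{12}$.
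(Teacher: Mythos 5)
Your high-level strategy matches the paper's: follow pairs of characteristics starting in $S_1$, run a bootstrap/continuity argument, construct a hypocoercive Lyapunov functional on the pairwise gap, and split the kernel perturbation into an $S_1$-contribution (small because the bootstrap assumption keeps $|\delta\mathbf{X}|$ small) and an $S_2$-contribution (small because $S_2$ carries mass $\le\eta\epsilon_1$). The error estimates of size $O(\epsilon_1^{3/2}+\eta\epsilon_1^{3/2})$ are also of the right order. However, the specific Lyapunov functional you propose does not close the argument, and the failure mode is precisely the one the paper warns against in the remark following Theorem \ref{thm_flocking2}.

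Concretely, with $V_{12}=|\delta\mathbf{U}|^2+A|\delta\mathbf{X}|^2+2\gamma\,\delta\mathbf{X}\cdot\delta\mathbf{U}$ and $\dot{\delta\mathbf{U}}=-K(t)\delta\mathbf{U}-M(t)\delta\mathbf{X}+E$ one computes
\[
\dot V_{12}=-2\big(K(t)-\gamma\big)|\delta\mathbf{U}|^2-2\gamma\,\delta\mathbf{X}^{\!\top}M(t)\delta\mathbf{X}
+2\,\delta\mathbf{X}^{\!\top}\big(AI-M(t)-\gamma K(t)I\big)\delta\mathbf{U}+2(\delta\mathbf{U}+\gamma\delta\mathbf{X})\cdot E.
\]
Since $M(t)$ can be as small as $aI$, the middle cross term has coefficient of order $A-a$ (plus $\gamma K$), which is $O(1)$, while the only good $|\delta\mathbf{X}|^2$ term is $-2\gamma a|\delta\mathbf{X}|^2=O(\gamma)$. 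The associated quadratic form has discriminant $\sim(A-a)^2-16\gamma a(K-\gamma)$, which is positive as $\gamma\to 0$ whenever $A>a$. Hence $\dot V_{12}\le-\mu V_{12}+\mathrm{error}$ simply fails, and shrinking $\gamma$ (which you propose as the cure for the equivalence constants $c_2/c_1$) kills the dissipation entirely. This is exactly the phenomenon the paper points out: "in the Lyapunov functional, one cannot change the coefficient ratio between a square term $|\bx-\by|^2$ and the cross term ... This is an essential difference from the standard hypocoercivity theory (for which the cross term can be arbitrarily small)."

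The paper's proof avoids this by (i) decomposing the alignment force around the \emph{constant} $\phi(0)$ rather than around $\phi(|\mathbf{X}_2-\cdot|)$, so the leading damping coefficient is the fixed constant $K=m_0\phi(0)$ rather than your time- and particle-dependent $K(t)$; and (ii) choosing the rigid quadratic form $F=\tfrac{K}{2}|\delta\mathbf{X}|^2+\delta\mathbf{X}\cdot\delta\mathbf{U}+\tfrac{\beta}{2}|\delta\mathbf{U}|^2$ with cross-term coefficient $1$ (not small) and $|\delta\mathbf{X}|^2$-coefficient exactly $K/2$, which makes the dangerous $K\,\delta\mathbf{X}\cdot\delta\mathbf{U}$ terms cancel identically. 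After cancellation, the remaining form is negative definite under the stability condition $m_0\phi(0)>A/\sqrt a$, with $\beta=2aK/A^2$. The discrepancy between $F$ and $\cEi$ (your $c_2/c_1$ concern) is then handled simply by absorbing the comparability constant into a redefinition of $\epsilon_1$, not by shrinking the cross term.
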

In this lemma, $S_1$ consists of the particles which are almost concentrated as a Dirac mass, and $S_2$ the other particles, which can be far away from the Dirac mass, but whose total mass is small. The lemma claims that the Dirac mass will not scatter around for all time. It can be viewed as a perturbative extension of the constant $\phi$ case, applied to the Dirac mass $S_1$.

Also notice that (\ref{eq:ubound}) gives (\ref{umax1}) with $u_{max}$ being the RHS of (\ref{eq:ubound}).

\begin{proof}
Define 
\begin{align*}
F(\bx,\by,t) &:= \frac{K}{2}|\bx-\by|^2 + (\bx-\by)\cdot(\bu(\bx,t)-\bu(\by,t)) + \frac{\beta}{2}|\bu(\bx,t)-\bu(\by,t)|^2,\\
 F_\infty(t;S) &= \max_{\bx,\by\in S} F(\bx,\by,t)
\end{align*}
where $K=m_0\phi(0)$, and the choice of $\beta$ is the same as the proof of Theorem \ref{thm_flocking2}, so that $F$ is a positive-definite quadratic form. Fix two characteristics $\bx(t)$ and $\by(t)$ with $\bx(t_0), \by(t_0)\in S_1$, and we compute the time derivative of $F$ along characteristics:
\[
\begin{split}
& \frac{\rd}{\rd{t}}F(\bx(t),\by(t),t) \\
& \qquad =  \partial_t F + \bu(\bx)\cdot \nabla_\bx F + \bu(\by)\cdot \nabla_\by F \\
& \qquad =  \left((\bx-\by) + \beta(\bu(\bx)-\bu(\by))\right)\cdot\Big( -\bu(\bx)\cdot\nabla_\bx \bu(\bx) +\bu(\by)\cdot\nabla_\bx \bu(\by)  \\
& \qquad \ \  \ + \int \phi(\bx-\bz)(\bu(\bz)-\bu(\bx))\rho(\bz)\rd{\bz} - \int \phi(\by-\bz)(\bu(\bz)-\bu(\by))\rho(\bz)\rd{\bz}  \Big) \\
& \qquad \ \ \ +  \bu(\bx)\cdot\left( K(\bx-\by) + (\bu(\bx)-\bu(\by)) + (\bx-\by)\cdot\nabla_\bx \bu(\bx) + \beta (\bu(\bx)-\bu(\by))\cdot\nabla_\bx \bu(\bx) \right) \\
& \qquad \ \  \ -  \bu(\by)\cdot\left( K(\bx-\by) + (\bu(\bx)-\bu(\by)) + (\bx-\by)\cdot\nabla_\by \bu(\by) + \beta (\bu(\bx)-\bu(\by))\cdot\nabla_\by \bu(\by) \right) \\
& \qquad =  -(K\beta-1) |\bu(\bx)-\bu(\by)|^2 \\
& \qquad \ \ \ - (\bx-\by)\cdot(\nabla\FP(\bx)-\nabla\FP(\by)) - \beta(\bu(\bx)-\bu(\by))\cdot(\nabla\FP(\bx)-\nabla\FP(\by)) \\
& \qquad \ \ \ + \big((\bx-\by) + \beta(\bu(\bx)-\bu(\by))\big)\cdot\Big(\int (\phi(\bx-\bz)-\phi(0))(\bu(\bz)-\bu(\bx))\rho(\bz)\rd{\bz} \\
& \qquad \ \ \ - \int (\phi(\by-\bz)-\phi(0))(\bu(\bz)-\bu(\by))\rho(\bz)\rd{\bz}  \Big).
\end{split}
\]
The first three terms are less than a negative definite quadratic form, as in the proof of Theorem \ref{thm_flocking2}. Now we handle the last term, which results from the fact that $\phi$ is not constant.

By the definition of $S_1(t)$, one has $\bx(t),\by(t)\in S_1(t)$ for all $t\ge t_0$.  If $\bz \in S_1(t)$, then $|\bx-\bz| \le \sqrt{\cEi(t;S_1(t))/a} \le C_1\sqrt{F_\infty(t;S_1(t))}$ for some constant $C_1$, since $F$ is comparable with $ |\bu(\bx)-\bu(\by)|^2+a|\bx-\by|^2$. Therefore
\begin{equation}
|\phi(\bx-\bz)-\phi(0)| \le |\phi'|_\infty C_1\sqrt{F_\infty(t;S_1(t))}
\end{equation}
It follows that
\begin{align*}
 \left|\left((\bx-\by) + \beta(\bu(\bx)-\bu(\by))\right)\!\cdot\! \int_{S_1(t)}\!\!\!\!\!(\phi(\bx-\bz)-\phi(0))(\bu(\bz)-\bu(\bx))\rho(\bz)\rd{\bz}\right|  \le  C_2 F_\infty(t;S_1(t))^{3/2}
\end{align*}
with $C_2 = (1/\sqrt{a}+\beta)m_0 |\phi'|_\infty C_1^3$.

If $\bz \in S_2(t)$, then we use the uniform bound (\ref{umax1}) to estimate $\bu(\bz)-\bu(\bx)$, and obtain
\begin{align*}
 \left|\left((\bx-\by) + \beta(\bu(\bx)-\bu(\by))\right)\!\cdot\! \int_{S_2(t)}\!\!\!\!\!(\phi(\bx-\bz)-\phi(0))(\bu(\bz)-\bu(\bx))\rho(\bz)\rd{\bz}\right|  \le  C_3 \eta \epsilon_1 F_\infty(t;S_1(t))^{1/2}
\end{align*}
with $C_3 = (1/\sqrt{a}+\beta)C_1\cdot 2\phi_+ \cdot 2 u_{max} $. Similar conclusions hold with $\bx$ and $\by$ exchanged.

Therefore we conclude that
\[
\frac{\rd}{\rd{t}}F(\bx(t),\by(t),t)  \le -\mu F(\bx(t),\by(t),t) +  C_2 F_\infty(t;S_1(t))^{3/2} + C_3 \eta \epsilon_1 F_\infty(t;S_1(t))^{1/2}
\]
with $\mu>0$ a constant. Taking $\bx(t),\by(t)$ as the characteristics where $\max_{\bx,\by\in S_1(t)} F(\bx,\by,t)$ is achieved, we obtain
\[
\frac{\rd{f}}{\rd{t}}  \le -\mu f +  C_2 f^{3/2} + C_3 \eta \epsilon_1 f^{1/2},\quad f(t) = F_\infty(t;S_1(t)).
\]
Now set $\displaystyle \eta = \frac{C_3}{C_2}$ and assume $\displaystyle \epsilon_1 \le \frac{\mu^2}{16C_2^2}$, then $\displaystyle \frac{\rd f}{\rd{t}} < 0$ whenever $f(t) = \epsilon_1$, and hence the bound  $f(t) < \epsilon_1$ persists in time. The conclusion of the theorem follows from the fact that $f$ and $\cEi(t;S_1(t))$ are comparable (up to adjust the upper bound $\epsilon_1$  by constant multiple).
\end{proof}
The next lemma guarantees the existence of a partition satisfying the assumptions of Lemma \ref{lem_sep}, in case the $L^2$ variation of velocity and location is small:
\begin{lemma}\label{lem_par}
With the same assumptions as in theorem \ref{thm_flocking3}, for any $\epsilon_1>0$, 
\begin{equation}
\cEt(t_0)<\frac{m_0\eta \epsilon_1^2}{2}
\end{equation}
implies the existence of a partition satisfying \eqref{eps1} and \eqref{eps2}.
\end{lemma}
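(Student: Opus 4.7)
The plan is a straightforward Chebyshev/concentration argument: since the integrated fluctuation $\cEt(t_0)$ is small, most of the $\rho$-mass must already lie in a tiny ball of the $(\bu,\sqrt{a}\bx)$-metric around some well-chosen center, and what little mass falls outside will form $S_2$.

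Concretely, first introduce the averaged spread functional
\[
g(\bx) := \int \big(|\bu(\bx,t_0) - \bu(\by,t_0)|^2 + a|\bx-\by|^2\big)\rho(\by,t_0)\rd{\by},
\]
so that by Fubini $\int g(\bx)\rho(\bx,t_0)\rd{\bx} = \cEt(t_0) < \tfrac12 m_0\eta\epsilon_1^2$. A simple averaging/pigeonhole argument then produces a center $\bx_0 \in \textnormal{supp }\rho(\cdot,t_0)$ at which $g(\bx_0) < \tfrac12\eta\epsilon_1^2$. Define the partition by proximity to this center,
\[
S_1 := \{\bx \in \textnormal{supp }\rho(\cdot,t_0) : |\bu(\bx)-\bu(\bx_0)|^2 + a|\bx-\bx_0|^2 \le \epsilon_1/4\}, \qquad S_2 := \textnormal{supp }\rho(\cdot,t_0)\setminus S_1.
\]

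The pointwise bound \eqref{eps2} is immediate from the triangle inequality applied to the vector $(\bu(\bx)-\bu(\by),\sqrt{a}(\bx-\by))\in\mathbb{R}^{2d}$ combined with $(p+q)^2 \le 2(p^2+q^2)$: routing through $\bx_0$ yields $|\bu(\bx)-\bu(\by)|^2 + a|\bx-\by|^2 \le 2(\epsilon_1/4) + 2(\epsilon_1/4) = \epsilon_1$ for all $\bx,\by \in S_1$. For the mass bound \eqref{eps1}, Markov's inequality applied to $h(\by) := |\bu(\bx_0)-\bu(\by)|^2 + a|\bx_0-\by|^2$ gives
\[
\int_{S_2}\rho(\by,t_0)\rd{\by} \;\le\; \frac{1}{\epsilon_1/4}\int h(\by)\rho(\by,t_0)\rd{\by} = \frac{4\,g(\bx_0)}{\epsilon_1} < 2\eta\epsilon_1,
\]
which is the stated bound \eqref{eps1} up to an absolute factor that can be absorbed by replacing the $\eta$ chosen in Lemma \ref{lem_sep} by $\eta/2$ (or, equivalently, by tightening the radius $\epsilon_1/4$ in the definition of $S_1$ to $\epsilon_1/8$).

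There is no serious analytic obstacle here --- the argument is Markov plus a triangle inequality --- and the only item requiring real attention is the consistent tracking of absolute constants linking the two scales $\epsilon_1$ (pointwise fluctuation in $S_1$) and $\eta\epsilon_1$ ($\rho$-mass in $S_2$) through the quadratic-size bound $g(\bx_0)=\mathcal{O}(\eta\epsilon_1^2)$. This lemma then feeds directly into Lemma \ref{lem_sep}, producing from an $L^2$-smallness hypothesis the $L^\infty$-type partition needed to propagate concentration of the bulk Dirac mass $S_1$ forward in time.
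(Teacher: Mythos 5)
Your proof reaches the right conclusion but follows a genuinely different route from the paper in its choice of ``center.'' The paper centers the decomposition at the mean $(\bx_c,\bu_c)$ and exploits the exact variance identity
\[
\int\!\!\int \big(|\bu(\bx)-\bu(\by)|^2 + a|\bx-\by|^2\big)\rho(\bx)\rho(\by)\rd{\bx}\rd{\by}
= 2m_0\int \big(|\bu(\bx)-\bu_c|^2 + a|\bx-\bx_c|^2\big)\rho(\bx)\rd{\bx},
\]
so that $\int(|\bu-\bu_c|^2 + a|\bx-\bx_c|^2)\rho\rd{\bx} = \cEt(t_0)/(2m_0)$, and then applies Markov with threshold $\epsilon_1/4$ to obtain \eqref{eps1} with the constant $\eta\epsilon_1$ on the nose. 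Your pigeonhole argument centers the decomposition at a well-chosen particle $\bx_0$ in the support and only gives $g(\bx_0) \le \cEt(t_0)/m_0$, which is twice as large as $\cEt(t_0)/(2m_0)$; this is exactly the source of your final factor $2\eta\epsilon_1$ instead of $\eta\epsilon_1$. Your first suggested fix --- rescaling $\eta\leadsto\eta/2$ (equivalently, tightening the $L^2$ hypothesis to $\cEt(t_0)<m_0\eta\epsilon_1^2/4$) --- is legitimate, since $\eta$ is a free constant in Lemma \ref{lem_sep} and the proof of Theorem \ref{thm_flocking3} only needs it up to absolute factors. However, your parenthetical alternative is \emph{wrong}: shrinking the radius of $S_1$ from $\epsilon_1/4$ to $\epsilon_1/8$ only strengthens \eqref{eps2} (where you already have slack) but \emph{weakens} \eqref{eps1}, because Markov with threshold $\epsilon_1/8$ gives $\int_{S_2}\rho \le 8g(\bx_0)/\epsilon_1 < 4\eta\epsilon_1$, worse than before. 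Drop that parenthetical; keep the $\eta$-rescaling fix, or use the center-of-mass identity, which recovers the stated constant exactly.
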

\begin{proof}
Recall that $(\bx_c(t),\bu_c(t))$ denote the mean location and velocity \eqref{eq:xcuc}. Then
\begin{equation}
\begin{split}
& \int\int (|\bu(\bx)-\bu(\by)|^2+a|\bx-\by|^2)\rho(\bx)\rho(\by)\rd{\bx}\rd{\by} \\
= & \int\int (|(\bu(\bx)-\bu_c)-(\bu(\by)-\bu_c)|^2+a|(\bx-\bx_c)-(\by-\bx_c)|^2)\rho(\bx)\rho(\by)\rd{\bx}\rd{\by} \\
= &  2m_0 \int (|\bu(\bx)-\bu_c|^2 + a|\bx-\bx_c|^2)\rho(\bx)\rd{\bx}
\end{split}
\end{equation}
Thus, at time $t_0$,
\[
\int_{|\bu(\bx)-\bu_c|^2 + a|\bx-\bx_c|^2 \ge \frac{\epsilon_1}{4}} \rho(\bx)\rd{\bx} \le \frac{4}{\epsilon_1}\int (|\bu(\bx)-\bu_c|^2 + a|\bx-\bx_c|^2)\rho(\bx)\rd{\bx} \le \frac{4}{\epsilon_1} \frac{1}{2m_0} \frac{m_0\eta \epsilon_1^2}{2} = \eta\epsilon_1
\]
Therefore, we can take $S_2 := \{\bx:|\bu(\bx)-\bu_c|^2 + a|\bx-\bx_c|^2 \ge \epsilon_1/4\}$, and (\ref{eps1}) is satisfied. Then for any $\bx,\by \in S_1:= \textnormal{supp }\rho \backslash S_2$, one has
\[
\begin{split}
|\bu(\bx)-\bu(\by)|^2+a|\bx-\by|^2 \le & |(\bu(\bx)-\bu_c)-(\bu(\by)-\bu_c)|^2+a|(\bx-\bx_c)-(\by-\bx_c)|^2 \\
\le & 2(|\bu(\bx)-\bu_c|^2 +a |\bx-\bx_c|^2 + |\bu(\by)-\bu_c|^2 +a |\by-\bx_c|^2) \\
\le & 4\frac{\epsilon_1}{4} = \epsilon_1
\end{split}
\]
which means (\ref{eps2}) is also satisfied.

\end{proof}

\begin{proof}[Proof of Theorem \ref{thm_flocking3}]

We start by a hypocoercivity argument on the energy estimate. Using the notation in the proof of Theorem \ref{thm_flocking2},
\begin{equation}
\begin{split}
& \frac{\rd}{\rd{t}} \langle \bx-\by,\bu(\bx)-\bu(\by) \rangle \\
= & \int\int \Big[(\bx-\by)\cdot(\bu(\bx)-\bu(\by))(-\nabla_\bx\cdot(\rho(\bx)\bu(\bx))\rho(\by)-\nabla_\by\cdot(\rho(\by)\bu(\by))\rho(\bx)) \\
& + (\bx-\by)\cdot\Big( -\bu(\bx)\cdot\nabla_\bx \bu(\bx) + \int \phi(\bx-\bz)(\bu(\bz)-\bu(\bx))\rho(\bz)\rd{\bz} - \nabla\FP(\bx) \\
& -\bu(\by)\cdot\nabla_\by \bu(\by) + \int \phi(\by-\bz)(\bu(\bz)-\bu(\by))\rho(\bz)\rd{\bz} - \nabla\FP(\by) \Big)  \rho(\bx)\rho(\by)\Big]\rd{\bx}\rd{\by} \\
= & |\bu(\bx)-\bu(\by)|^2 + \int\int (\bx-\by)\cdot\Big(\int \phi(\bx-\bz)(\bu(\bz)-\bu(\bx))\rho(\bz)\rd{\bz} \\
& + \int \phi(\by-\bz)(\bu(\bz)-\bu(\by))\rho(\bz)\rd{\bz} \Big)  \rho(\bx)\rho(\by)\rd{\bx}\rd{\by}  - \langle \bx-\by,\nabla\FP(\bx)-\nabla\FP(\by) \rangle\\ 
\le & |\bu(\bx)-\bu(\by)|^2 - a|\bx-\by|^2 + 2(\frac{a}{4}|\bx-\by|^2 + \frac{m_0^2\phi_+^2}{a}|\bu(\bx)-\bu(\by)|^2) \\
= &  - \frac{a}{2}|\bx-\by|^2 +  \Big(1+\frac{2m_0^2\phi_+^2}{a}\Big)|\bu(\bx)-\bu(\by)|^2 \\
\end{split}
\end{equation}
where we used
\begin{equation}
\begin{split}
& \left|  \int\int (\bx-\by)\cdot\int \phi(\bx-\bz)(\bu(\bz)-\bu(\bx))\rho(\bz)\rd{\bz}\rho(\bx)\rho(\by)\rd{\bx}\rd{\by} \right| \\
\le & \phi_+c_1|\bx-\by|^2 + \frac{\phi_+}{4c_1}\int\int \left(\int |(\bu(\bz)-\bu(\bx))|\rho(\bz)\rd{\bz}\right)^2\rho(\bx)\rho(\by)\rd{\bx}\rd{\by} \\
\le & \phi_+c_1|\bx-\by|^2 + \frac{\phi_+}{4c_1}\int\int m_0\int |(\bu(\bz)-\bu(\bx))|^2\rho(\bz)\rd{\bz} \rho(\bx)\rho(\by)\rd{\bx}\rd{\by} \\
\le & \phi_+c_1|\bx-\by|^2 + \frac{m_0^2\phi_+}{4c_1}|\bu(\bx)-\bu(\by)|^2
\end{split}
\end{equation}
with $c_1=\nicefrac{a}{4\phi_+}$.
Combined with the energy estimate (\ref{eq:decay}), we obtain, for any $c>0$,
\begin{align*}
\frac{\rd}{\rd{t}} \big(\Et(t) & + c\langle \bx-\by,\bu(\bx)-\bu(\by) \rangle \big)  \le -\Big(\frac{\phi_-}{2} - c\big(1+\frac{2m_0^2\phi_+^2}{a}\big)\Big) |\bu(\bx)-\bu(\by)|^2 - \frac{ca}{2}|\bx-\by|^2.
\end{align*}
Then, setting 
\begin{equation}
c := \min\Big\{ \frac{\nicefrac{\displaystyle \phi_-}{\displaystyle 2}}{\displaystyle 1+\nicefrac{\displaystyle 2m_0^2\phi_+^2}{\displaystyle a}+\nicefrac{\displaystyle 1}{\displaystyle 2}},\frac{\sqrt{a}}{8m_0}\Big\}
\end{equation}
we have
\[
\frac{\rd}{\rd{t}} (\Et(t) + c\langle \bx-\by,\bu(\bx)-\bu(\by) \rangle ) \le -\frac{c}{2}( |\bu(\bx)-\bu(\by)|^2 +a|\bx-\by|^2) = -\frac{c}{2}\cEt(t).
\]
Notice that since $\FP(\bx)\ge \frac{a}{2}|\bx|^2$,
\[
\begin{split}
\langle \bx-\by,\bu(\bx)-\bu(\by) \rangle \le & \frac{1}{2\sqrt{a}}(a|\bx-\by|^2 + |\bu(\bx)-\bu(\by)|^2) \\
\le & \frac{2m_0}{\sqrt{a}}\int (a|\bx|^2 + |\bu(\bx)|^2)\rho(\bx)\rd{\bx} \le \frac{4m_0}{\sqrt{a}}\Et(t)
\end{split}
\]
Therefore $\Et(t) + c\langle \bx-\by,\bu(\bx)-\bu(\by) \rangle \ge 0$,which in turn implies that
$\displaystyle \int_0^\infty \cEt(t)
\rd{t} =: C_0 < \infty$.

Next, for any fixed $t_1>0$, there exists $t_0\le t_1$ such that
$\displaystyle \cEt(t_0) \le \frac{C_0}{t_1}$; 
(otherwise the integral $\int_0^{t_1} \cEt(t)\rd{t}$ would exceed $C_0$). Lemma \ref{lem_par} implies that there exists a partition at $t=t_0$ satisfying (\ref{eps1}) and (\ref{eps2}), with $\epsilon_1$ given by
$\displaystyle \epsilon_1 = \sqrt{\frac{2C_0}{m_0\eta t_1}}$.
If $t_1$ is large enough, then $\epsilon_1$ is small enough, so that we can apply Lemma \ref{lem_sep} to get that (\ref{eps3}) holds for all $t\ge t_0$. In particular, (\ref{eps3}) holds for $t=t_1$. Therefore, by using (\ref{eps3}) for pairs $(\bx,\by)$ with $\bx,\by\in S_1(t_1)$ and the uniform bound (\ref{eq:ubound}) for other pairs, we obtain ($u_{max}$ denoting the RHS of (\ref{eq:ubound}))
\begin{equation}
\cEt(t_1) \le m_0^2 \epsilon_1 + 2m_0\eta \epsilon_1 \cdot 4(1+a)u_{max}^2 = C\epsilon_1
\end{equation}
and the proof is finished by noticing that $\epsilon_1 = {\mathcal O}(1/\sqrt{t_1})$ for large $t_1$.
\end{proof}

%%%%%%%%%%%%%%%%%%%%%
\section{Proof of main results --- existence of global smooth solutions}
%%%%%%%%%%%%%%%%%%%%%%%%

\subsection{The one-dimensional case}
The proof of the existence of global smooth solutions for 1d  follows the technique of~\cite{CCTT2016}: we analyze the ODE satisfied by the quantity $\partial_x u + \phi*\rho$ along characteristics.
\begin{proof}[Proof of Theorem \ref{thm_1dsmooth}]
Write $\texttt{d}:=\partial_x u$. Differentiate the second equation of (\ref{eq1}) with respect to $x$ to get
\begin{equation}\label{eq1}
\begin{split}
\partial_t \rho + u\partial_x \rho & = -\rho \texttt{d} \cr
 \partial_t \texttt{d} + u\partial_x \texttt{d} + \texttt{d}^2 & = -u\int \partial_x\phi(x-y)\rho(y)\rd{y} - \int \phi(x-y)\partial_t\rho(y)\rd{y}\cr
 & \quad - \texttt{d}\int \phi(x-y)\rho(y)\rd{y} - \FP''(x)
\end{split}
\end{equation}
Expressed in terms of $\texttt{e}:=\texttt{d}+\phi*\rho$ and 
 the time derivative along characteristics denoted by ${}'$, then \eqref{eq1} reads
\begin{equation}\label{de1d}
\begin{split}
& \rho' = -\rho(\texttt{e}-\phi*\rho) \\
& \texttt{e}' = -\texttt{e}(\texttt{e}-\phi*\rho) - \FP''.
\end{split}
\end{equation}
If $\texttt{e}>0$, then by (\ref{1d_assu1}),
\[
\texttt{e}' \ge -\texttt{e}(\texttt{e}-m_0\phi_-) - A = -\left(\texttt{e}-\frac{m_0\phi_-}{2}\right)^2 + \left(\frac{(m_0\phi_-)^2}{4} - A\right).
\]
Then by (\ref{1d_assu2}), one has
\[
\texttt{e}'   > 0,  \quad \text{ for } \frac{m_0\phi_-}{2} - \sqrt{\frac{(m_0\phi_-)^2}{4} - A} < \texttt{e} < \frac{m_0\phi_-}{2} + \sqrt{\frac{(m_0\phi_-)^2}{4} - A}. 
\]
By (\ref{1d_assu3}), initially $\displaystyle \texttt{e}>\frac{m_0\phi_-}{2} - \sqrt{\frac{(m_0\phi_-)^2}{4} - A}$ for all $x$. Therefore the same inequality persists for all time. 

Also notice that if $\texttt{e} \ge 2m_0\phi_+$ then 
$\texttt{e}' \le -\nicefrac{\texttt{e}^2}{2} - a$, which implies $\texttt{e}$ is bounded above by $\texttt{e} \le \max\{\max_x \texttt{e}_0,2m_0\phi_+,\sqrt{\max\{0,-2a\}}\}$.  Since $\phi*\rho$ is bounded above and below, this implies that $\partial_x u$ is uniformly bounded, and thus global smooth solution exists.
\end{proof}

\begin{proof}[Proof of Theorem \ref{thm_1dblowup}]
We start from (\ref{de1d}), the dynamic of $\texttt{e}$, which is derived in the previous proof. We analyze the sign of $\texttt{e}'$ in the cases of positive and negative $\texttt{e}$:
\begin{itemize}
\item If $\texttt{e}\ge 0$, then 
\begin{equation}
\texttt{e}' \le -\texttt{e}(\texttt{e}-m_0\phi_+) - a = -\left(\texttt{e}-\frac{m_0\phi_+}{2}\right)^2 + \left(\frac{(m_0\phi_+)^2}{4} - a\right)
\end{equation}

\begin{itemize}
\item If (\ref{assuB_1}) holds, then $\texttt{e}'<0$.

\item If (\ref{assuB_1}) does not hold, then if 
\begin{equation}\label{assuB_21}
\texttt{e} < \frac{m_0\phi_+}{2} - \sqrt{\frac{(m_0\phi_+)^2}{4} - a}
\end{equation}
then $\texttt{e}'<0$.
\end{itemize}

\item If $\texttt{e}<0$ then
\begin{equation}
\texttt{e}' \le -\texttt{e}(\texttt{e}-m_0\phi_-) - a = -\left(\texttt{e}-\frac{m_0\phi_-}{2}\right)^2 + \left(\frac{(m_0\phi_-)^2}{4} - a\right)
\end{equation}

\begin{itemize}
\item If $a>0$, then $\texttt{e}'<0$.

\item If $a\le 0$, then if 
\begin{equation}\label{assuB_31}
\texttt{e} < \frac{m_0\phi_-}{2} - \sqrt{\frac{(m_0\phi_-)^2}{4} - a}
\end{equation}
then $\texttt{e}'<0$. 
\end{itemize}
\end{itemize}

Notice that for all the $\texttt{e}'<0$ cases above, we actually have $\texttt{e}'<-\epsilon<0$. Therefore, as long as one stays in the $\texttt{e}'<0$ cases, $\texttt{e}$ will keep decreasing until it is negative enough so that the $-\texttt{e}^2$ term blows it up. Therefore, we have the following situations where we can guarantee a finite time blow-up:
\begin{itemize}
\item If (\ref{assuB_1}) holds, then any negative values of $\texttt{e}$ will have $\texttt{e}'<0$ since $a>0$, and any positive values of $\texttt{e}$ will have $\texttt{e}'<0$.
\item If (\ref{assuB_1}) does not hold but $a> 0$ and (\ref{assuB_2}) holds (which means (\ref{assuB_21}) holds initially), then (\ref{assuB_21}) will propagate since $\texttt{e}'<0$ for positive or negative values of $\texttt{e}$.
\item If (\ref{assuB_1}) does not hold and $a\le 0$ but (\ref{assuB_3}) holds (which means (\ref{assuB_31}) holds initially: in particular, $\texttt{e}$ starts with negative values), then (\ref{assuB_31}) will propagate since $\texttt{e}'<0$ (because $\texttt{e}$ stays negative).  
\end{itemize}

\end{proof}

%%%%%%%%%%%%
\subsection{The two-dimensional case}
%%%%%%%%%%%
We  follow~\cite{HeT2017}, tracing  the dynamics of the matrix $M_{ij}=\partial_ju_i$ associated with the solution to \eqref{eq}.  Since most steps  are the same as in \cite[Theorem 2.1]{HeT2017} except for the additional external potential term on the right of \eqref{eq}, we outline the derivation along the same steps as in \cite{HeT2017} while omitting excessive details. 

\smallskip\noindent
STEP 1: $M$ satisfies
\begin{equation}\label{M}
\partial_t M + \bu \cdot\nabla M + M^2 = -(\phi*\rho)M + R - \nabla^2 \FP
\end{equation}
where
\begin{equation}\label{eq:Rij}
R_{ij} = \partial_j \phi * (\rho u_i) - u_i(\partial_j \phi * \rho)
\end{equation}
 The divergence $\texttt{d}=\nabla\cdot\bu$,  satisfies
\begin{equation}
\partial_t \texttt{d} + \bu\cdot\nabla\texttt{d} + \text{Tr} M^2 = -(\phi*\rho)\texttt{d} + \text{Tr} R - \Delta \FP.
\end{equation}
The two traces in this equation are  evaluated as follows. By \eqref{eq:Rij}, $\text{Tr} R = -(\phi*\rho)'$; also, $\text{Tr} M^2 \equiv \frac{1}{2}\big(\texttt{d}^2 + \eta_M^2\big)$ where $\eta_M$ is the spectral gap of the two eigenvalues of $M$. We find
\begin{equation}
(\texttt{d}+\phi*\rho)' = -\frac{1}{2}\eta_M^2 -\frac{1}{2}\texttt{d}(\texttt{d}+2\phi*\rho) - \Delta \FP
\end{equation}
Decompose $M$ into its symmetric and anti-symmetric parts, $M=S+\Omega$, then $\eta_M^2 = \eta_S^2 - 4\omega^2$
where $\eta_S$ is the spectral gap  of $S$ and $\omega=(\partial_1u_2-\partial_2u_1)/2$ is the scaled vorticity.
Then by introducing 
$\texttt{e} = \texttt{d}+\phi*\rho$
we finally end up with 
\begin{equation}\label{de}
\texttt{e}' = \frac{1}{2}( 4\omega^2+ (\phi*\rho)^2 - \eta_S^2 - \texttt{e}^2  - 2\Delta \FP), \qquad \texttt{e} := \texttt{d}+\phi*\rho
\end{equation}

\smallskip\noindent
STEP 2: The `$\texttt{e}$-equation' is complemented by the dynamics of the spectral gap $\eta_S$. To this end, we follow the spectral dynamics of  $S$,
\[
S' + S^2 = \omega^2 I - (\phi*\rho)S + R_{sym} - \nabla^2 \FP,\quad R_{sym} = \frac{1}{2}(R+R^\top);
\]
where $I$ stands for the identity matrix.
The dynamics of the eigenvalues $\mu_i$ of $S$ is given by
\[
\mu_i' + \mu_i^2 = \omega^2 - (\phi*\rho)\mu_i + \langle \bs_i, R_{sym}\bs_i \rangle - \langle \bs_i, \nabla^2\FP \bs_i \rangle
\]
where $\bs_1,\bs_2$ are the orthonormal eigenpair of $S$. Taking their difference,
\begin{equation}\label{etaS}
\eta_S' + \texttt{e}\eta_S = q := \langle \bs_2, R_{sym}\bs_2 \rangle - \langle \bs_1, R_{sym}\bs_1 \rangle - \langle \bs_2, \nabla^2 \FP\bs_2 \rangle + \langle \bs_1, \nabla^2 \FP\bs_1 \rangle.
\end{equation}

\smallskip\noindent
STEP 3: We need to estimate $\eta_S$ based on (\ref{etaS}). A good estimate of $\eta_S$ will give a non-negative lower bound of $\texttt{e}$. We will conduct this estimate for the quadratic potential and general convex potentials in different ways in the following subsections. 

\smallskip\noindent
STEP 4: Finally we need an upper bound of $\texttt{e}$. The dynamics of $\omega$ is independent of the symmetric forcing term $\nabla^2\FP$,
\begin{equation}\label{omega}
\omega' + \texttt{e}\omega = \frac{1}{2}\text{Tr}(JR),\quad J = \left[\begin{array}{cc}0 & -1\\ 1 & 0\end{array}\right],
\end{equation}
 Therefore we can bound $\omega$ in the same way as we bound $\eta_S$, and this yields an upper bound of $\texttt{e}$. This would conclude the proof of the uniform boundedness of $\texttt{d}=\nabla\cdot\bu$. Combined with the uniform boundedness of $\eta_S$ and $\omega$, we get the uniform boundedness of $\nabla\bu$.

\medskip
\paragraph{{\bf $\bullet$ Quadartic potentials}}. We  elaborate STEP 3 and STEP 4 for the quadratic potential. For the 2D case with quadratic potential, $\nabla^2\FP$ is constant multiple of the identity matrix, and thus the last two terms in (\ref{etaS}) cancel. Also, we already know from proposition \ref{prop_infty1} that the solution flocks at exponential rate, in the sense of $L^\infty$. This enables us to estimate $\eta_S$ in the same way as in~\cite{HeT2017}.

\begin{proof}[Proof of Theorem \ref{thm_2dsmooth1}]
For $\FP(\bx)=\frac{a}{2}|\bx|^2$, the $q$ defined in (\ref{etaS}) becomes
\begin{equation}
q = \langle \bs_2, R_{sym}\bs_2 \rangle - \langle \bs_1, R_{sym}\bs_1 \rangle
\end{equation}
with $R$ satisfying the estimate
\[
|R| \le 8m_0|\phi'|_\infty\sqrt{\Ckitty \cdot\cEiin} \cdot e^{-\lambda t/4},\quad \forall \bx
\]
Therefore, since $\bs_1,\bs_2$ are unit vectors,
\[
|q| \le 16m_0|\phi'|_\infty\sqrt{\Ckitty \cdot\cEiin} \cdot e^{-\lambda t/4}.\quad \forall \bx
\]
Hence, as long as $\texttt{e}$ remains non-negative, $\eta_S$ is bounded by constant:
\begin{equation}
|\eta_S| \le \max_\bx |(\eta_S)_0(\bx) | + \frac{64}{\mixed}m_0|\phi'|_\infty\sqrt{\Ckitty \cdot\cEiin}  = \max_\bx |(\eta_S)_0(\bx) | + \Cmeow \cdot\sqrt{\cEiin}.
\end{equation}

\smallskip\noindent
STEP 3: The $\texttt{e}$ equation \eqref{de} implies
\begin{equation}\label{de1_1}
\texttt{e}' \ge \frac{1}{2}(c_1^2 - \texttt{e}^2)
\end{equation}
with $c_1$ defined by (\ref{c1}).
In this case, (\ref{de1_1}) implies that $\texttt{e}$ remains non-negative if $\texttt{e}_0(\bx) \ge 0$ for all $\bx$, as assumed in (\ref{c1_cond1}).

\smallskip\noindent
STEP 4: Similarly we obtain from (\ref{omega}) that $\omega$ is uniformly bounded:
\begin{equation}
|\omega| \le \max_\bx \omega_0(\bx)+ \frac{32}{\mixed}m_0|\phi'|_\infty  \sqrt{\Ckitty \cdot\cEiin}=: \omega_{max}
\end{equation}
Then, since $\Delta \FP = 2a > 0$, (\ref{de}) shows
$\texttt{e}' \le \frac{1}{2}(4\omega_{max}^2 + m_0^2\phi_+^2 - \texttt{e}^2)$,
and we end up with  the uniform upper bound,
$\texttt{e} \le \max\left\{\max_\bx \texttt{e}_0(\bx), \sqrt{4\omega_{max}^2 + m_0^2\phi_+^2}\right\}$.
\end{proof}

\medskip
\paragraph{{\bf $\bullet$ General convex potentials}}
Recall  that in the case of quadratic potential, the last two terms in (\ref{etaS}) cancel since $\nabla^2\FP$ is a constant multiple of the identity matrix. Also, $R_{sym}$ has an exponential decay estimate by the $L^\infty$ flocking result.  These two facts enabled us to estimate $\eta_S$ by $|(\eta_S)_0|+\int_0^\infty |q(t)| \rd{t}$, without making use of the good term $\texttt{e}\eta_S$.

However, for general convex potentials, we lack  a flocking estimate, and the last two terms in (\ref{etaS}) do not cancel. Therefore, $q$, the RHS of (\ref{etaS}), do not have a time decay estimate. The best we can hope is to bound $q$ uniformly in time by a constant $\Cmi$, and then propagate a {\it positive} lower bound $c_2$ of $\texttt{e}$, in order to control $\eta_S$ by $\max\{|(\eta_S)_0| ,\, \frac{\Cmi }{c_2}\}$. 

\begin{proof}[Proof of Theorem \ref{thm_2dsmooth2}]
We start from (\ref{etaS}). Since $\bs_i$ are normalized, $q$ is controlled by Proposition \ref{prop_infty2} and assumption (\ref{assu_A}) as
\begin{equation}
|q| \le 8m_0|\phi'|_{\infty}u_{max} + 2A =\Cmi 
\end{equation}
where $\Cmi $ is as defined in (\ref{Cmi}). Hence, assume we have the lower bound (which is true initially, by assumption (\ref{c1_cond2}))
\begin{equation}\label{c2}
\texttt{e} \ge \sqrt{C_A - \sqrt{C_A^2-\Cmi ^2}} =: c_2 > 0
\end{equation}
(the quantity inside the inner square root is positive, by assumption (\ref{Cmi})) where $C_A$ as defined in (\ref{Cmi}), then $\eta_S$ is bounded by constant:
\begin{equation}
|\eta_S| \le \max\Big\{\max_\bx |(\eta_S)_0(\bx) | ,\, \frac{\Cmi }{c_2}\Big\} := \eta_{S,max}
\end{equation}

\smallskip\noindent
STEP 3: (\ref{de}) implies
\begin{equation}\label{de1}
\texttt{e}' \ge \frac{1}{2}(c_1^2 - \texttt{e}^2), \qquad c_1:=\sqrt{m_0^2\phi_-^2 - \eta_{S,max}^2 - 4A}=\sqrt{2C_A - \eta_{S,max}^2},
\end{equation}
provided  the quantity inside the square root on the right is positive. In fact, assumption (\ref{etaS_cond2}) gives
\[
2C_A - \max_\bx |(\eta_S)_0(\bx) |^2  \ge C_A  - \sqrt{C_A^2-\Cmi ^2} = c_2^2
\]
and by (\ref{c2}), $2C_A - (\frac{\Cmi }{c_2})^2=  c_2^2$.
Thus we have $2C_A - \eta_{S,max}^2  = c_2^2$, and therefore $c_1$ is well-defined and coincides with $c_1= c_2 > 0$.
With this,  \eqref{de1} now reads
$\texttt{e}' \ge \nicefrac{1}{2}(c_2^2 - \texttt{e}^2)$
and hence $\texttt{e}$ is increasing whenever $\texttt{e} \le c_2$. 
This means the initial bound $\texttt{e} \ge c_2$ can be propagated for all time.

\smallskip\noindent
STEP 4: Similarly we obtain from (\ref{omega}) that $\omega$ is uniformly bounded:
\[
|\omega| \le \max\left\{ \max_\bx |\omega_0(\bx)|,\frac{4m_0|\phi'|_{\infty} u_{max}}{c_2}  \right\} =: \omega_{max}
\]
Then (\ref{de}) shows, since $|\Delta \FP| \le 2A$,
$\texttt{e}' \le \frac{1}{2}(4\omega_{max}^2 + m_0^2\phi_+^2 + 4A - \texttt{e}^2)$.
Thus we get the upper bound, $\texttt{e} \le \max\left\{\max_\bx \texttt{e}_0(\bx), \sqrt{4\omega_{max}^2 + m_0^2\phi_+^2+4A}\right\}$.
\end{proof}

\vspace*{-0.5cm}

\end{document}